\renewcommand*\l@section{\@dottedtocline{1}{1.5em}{2.3em}}
\newcommand{\midarrow}{\tikz \draw[-triangle 90] (0,0) -- +(.1,0);}
\theoremstyle{plain}
\newtheorem{theorem}{Theorem}[section]
\newtheorem{proposition}[theorem]{Proposition}
\newtheorem{lemma}[theorem]{Lemma}
\newtheorem{corollary}[theorem]{Corollary}
\theoremstyle{definition}
\newtheorem{definition}{Definition}[section]
\newtheorem{example}{Example}[section]
\theoremstyle{remark}
 \newtheorem{remark}{Remark}[section]
\newcommand{\im}{{\mathrm{Im}\hspace{0.1em}}}
\numberwithin{equation}{section}
\numberwithin{theorem}{section}
\begin{document}
    \begin{CJK*}{GBK}{kai}
    \CJKtilde

\begin{center}
{\Large {\textbf {The algebraic stability for persistent Laplacians 
}}}
 \vspace{0.58cm}

Jian Liu, Jingyan Li, Jie Wu*

\bigskip

\bigskip

    \parbox{24cc}{{\small
{\textbf{Abstract}.}
The stability of topological persistence is one of the fundamental issues in topological data analysis. Numerous methods have been proposed to address the stability of persistent modules or persistence diagrams. Recently, the concept of persistent Laplacians has emerged as a novel approach to topological persistence, attracting significant attention and finding applications in various fields. In this paper, we investigate the stability of persistent Laplacians.
We introduce the notion of ``Laplacian trees'', which captures the collection of persistent Laplacians that persist from a given parameter. To formalize our study, we construct the category of Laplacian trees and establish an algebraic stability theorem for persistent Laplacian trees. Notably, our stability theorem is applied to the real-valued functions on simplicial complexes and digraphs.

}}
\end{center}

\vspace{1cc}

\footnotetext[1]
{ {\bf 2020 Mathematics Subject Classification.}  	Primary  55N31; Secondary 15A63, 43A95.
}

\footnotetext[2]{{\bf Keywords and Phrases.}   Persistent homology; persistent Laplacians; algebraic stability; inner product spaces; persistence harmonic spaces. }

\footnotetext[3] {* corresponding author. }
\section{Introduction}
Persistent homology, a relatively recent method for extracting topological features and recognizing geometric shapes, has evolved into a well-established theory \cite{zomorodian2004computing,carlsson2004persistence,carlsson2007theory,carlsson2010zigzag,edelsbrunner2000topological}.
The stability for topological persistence is one of the fundamental issues in topological data analysis. In \cite{cohen2005stability}, D. Cohen-Steiner, H. Edelsbrunner, and J. Harer began to study the stability of persistence diagram with respect to the real-valued functions on a given topological space.
The notion of ``$\varepsilon$-interleaved'' was subsequently introduced by F. Chazal, D. Cohen-Steiner, and M. Glisse et al to  describe the algebraic stability theory for persistence diagrams of functions defined on different spaces \cite{chazal2009proximity}.
In 2013, the authors obtained the induced matching theorem, which provides a generalization and alternative proof of the algebraic stability theorem for persistence diagrams \cite{bauer2015induced}.
In \cite{bubenik2014categorification}, the authors provided a categorification of the algebraic stability theorem and studied the category of $\varepsilon$-interleavings.
In 2015, H. Edelsbrunner, G. Jab{\l}o\'{n}ski, and M. Mrozek explored the persistent homology of self-maps and presented a stability theorem for self-maps in terms of interleaving distance. \cite{edelsbrunner2015persistent}.
Then U. Bauer and M. Lesnick provided simplified reformulations of the stability theorem and the induced matching theorem in terms of the category of diagrams \cite{bauer2020persistence}.
Recently, A. J. Blumberg and M. Lesnick investigated the stability of 2-parameter persistent homology. \cite{blumberg2022stability}.
Stability theorems have also been considered for other objects, such as the stability of persistent homology for hypergraphs \cite{ren2020stability}, the stability of persistent path homology for networks \cite{chowdhury2018persistent}, and the stability of persistence diagrams over lattices \cite{mccleary2022edit}.

The classical de Rham-Hodge theory establishes an isomorphism between the de Rham cohomology and the space of harmonic forms for an oriented Riemannian manifold without boundary. In a similar vein, the formulation of discrete de Rham-Hodge theory in the context of topological persistence leads to analogous conclusions.
Since 2019, G.-W. Wei and his coauthors have introduced and explored the concept of persistent Laplacians, revealing their potential for capturing additional topological information and geometric features \cite{chen2021evolutionary,wang2020persistent,meng2021persistent}. These studies have demonstrated the richness of persistent Laplacians and their applications. In a recent work by F. Memoli, Z. Wan, and Y. Wang \cite{memoli2022persistent}, a theoretical investigation and efficient algorithms for persistent Laplacians were presented. Notably, they examined the stability theorem for up-persistent eigenvalues in terms of the interleaving distance.
However, the stability of Laplacians remains an implicit aspect to date. One challenge in studying persistent Laplacians lies in describing their underlying structure. While persistence modules or persistence diagrams can encapsulate all the information of persistent homology, the collection of persistent Laplacians, unlike other algebraic objects such as vector spaces, poses difficulties in achieving efficient representation. Additionally, the algebraic stability theorem for persistence diagrams heavily relies on categorical constructions of persistence objects. When dealing with persistent Laplacians, a significant challenge is to identify a suitable categorical framework that can facilitate the development of an algebraic stability theorem.

In this paper, our objective is to establish an algebraic stability theorem for persistent Laplacians. We initiate this by categorizing the persistent algebraic Laplacians within the category $\mathbf{DGI}$ of differential graded inner product spaces. Let $(\mathbb{R},\leq)$ be the category with objects represented by real numbers and morphisms denoted by $a\to b$ for any real numbers $a\leq b$. A \emph{persistence object on a category $\mathfrak{C}$} is defined as a functor $\mathcal{F}:(\mathbb{R},\leq)\to \mathfrak{C}$ from the category $(\mathbb{R},\leq)$ to the category $\mathfrak{C}$. Consider $\mathcal{S}:(\mathbb{R},\leq)\to \mathbf{DGI}$, where $\mathbf{DGI}$ represents the category of differential graded inner product spaces. Within this context, there are two functors from the category $\mathbf{DGI}$ of differential graded inner product spaces to the category $\mathbf{Inn}$ of inner product spaces, the homology functor $H:\mathbf{DGI}\to \mathbf{Inn}$ and the harmonic space functor $h:\mathbf{DGI}\to \mathbf{Inn}$ (Section \ref{section:Laplacians}).
\begin{theorem}\label{introduction:theorem1}
There is a natural isomorphism $h\mathcal{S}\Rightarrow H\mathcal{S}$ of functors from $(\mathbb{R},\leq)$ to $\mathbf{Inn}$.
\end{theorem}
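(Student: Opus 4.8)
The plan is to prove a sharper statement at the level of $\mathbf{DGI}$ itself, namely to construct a natural isomorphism $\eta\colon h \Rightarrow H$ between the two functors $\mathbf{DGI}\to\mathbf{Inn}$, and then to obtain the claimed transformation by left-whiskering with $\mathcal{S}$. Since $h\mathcal{S}=h\circ\mathcal{S}$ and $H\mathcal{S}=H\circ\mathcal{S}$, the components $(\eta\mathcal{S})_t:=\eta_{\mathcal{S}(t)}$ automatically assemble into a natural isomorphism $h\mathcal{S}\Rightarrow H\mathcal{S}$ of functors $(\mathbb{R},\leq)\to\mathbf{Inn}$; the persistence parameter then plays no further role, so the real content is the objectwise Hodge isomorphism together with its naturality over $\mathbf{DGI}$.

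For a single object $C=(C_\bullet,\partial,\langle\,,\rangle)\in\mathbf{DGI}$ with Laplacian $\Delta=\partial\partial^*+\partial^*\partial$, I would first recall the algebraic Hodge decomposition: the orthogonal direct sum $C_n=\im\partial_{n+1}\oplus\ker\Delta_n\oplus\im\partial^*_n$, under which $\ker\partial_n=\im\partial_{n+1}\oplus\ker\Delta_n$. I then define $\eta_C\colon h(C)=\ker\Delta\to H(C)$ by $\omega\mapsto[\omega]$. Because every cycle decomposes as a harmonic part plus a boundary while the harmonic summand meets the boundaries only in $0$, each homology class has a unique harmonic representative, so $\eta_C$ is a linear isomorphism. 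Moreover the inner product that $\mathbf{Inn}$ requires on $H(C)$ is precisely the one transported from $\ker\Delta$ (equivalently, the quotient inner product carried by the orthogonal complement of boundaries inside cycles), so $\eta_C$ is an isometry, that is, an isomorphism in $\mathbf{Inn}$.

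For naturality, let $f\colon C\to D$ be a morphism of $\mathbf{DGI}$ and recall from Section \ref{section:Laplacians} that $h(f)$ is the composite $h(C)\hookrightarrow\ker\partial_C\xrightarrow{f}\ker\partial_D\xrightarrow{P_D}h(D)$, where $P_D$ denotes orthogonal projection onto $\ker\Delta_D$. For $\omega\in h(C)$ one has $H(f)(\eta_C(\omega))=[f(\omega)]$ and $\eta_D(h(f)(\omega))=[P_D f(\omega)]$. Applying the Hodge decomposition to the cycle $f(\omega)\in\ker\partial_D$, its component in $\im\partial^*_D$ vanishes, so $f(\omega)-P_D f(\omega)\in\im\partial_D$ is a genuine boundary; hence $[f(\omega)]=[P_D f(\omega)]$ and the naturality square commutes. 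Being objectwise an isomorphism, $\eta$ is therefore a natural isomorphism, and whiskering with $\mathcal{S}$ completes the argument.

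The main obstacle I anticipate is the naturality step, specifically the interaction between orthogonal projection and a chain map that need not preserve inner products: one must confirm that the projection definition makes $h$ genuinely functorial and that the correction term $f(\omega)-P_D f(\omega)$ is a boundary rather than merely a cycle. The essential lemma—Hodge projection alters a cycle only by a boundary—follows from the splitting $\ker\partial=\im\partial_{+1}\oplus\ker\Delta$; if the paper instead restricts to morphisms commuting with $\Delta$ (e.g. adjointable maps) this becomes immediate, but the projection formulation covers the general chain-map case uniformly. The point deserving the most care is the isometry claim, since that is where $\mathbf{Inn}$ rather than a bare vector-space category enters, and it requires pinning down exactly how the inner product on $H(C)$ is defined in Section \ref{section:Laplacians}.
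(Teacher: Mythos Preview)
Your proposal is correct and follows essentially the same route as the paper: prove a natural isomorphism $h\Rightarrow H$ at the level of $\mathbf{DGI}$ (the paper's Proposition~\ref{proposition:isomorphism1}) via $\omega\mapsto[\omega]$, check naturality by observing that $f(\omega)$ differs from its harmonic projection by a boundary, and then whisker with $\mathcal{S}$ (the paper's Theorem~\ref{theorem:nature}). One small correction to your concluding remarks: in this paper morphisms in $\mathbf{DGI}$ are by definition isometric chain maps, so your worry about ``a chain map that need not preserve inner products'' does not arise here---though, as you note, your projection argument handles the general case anyway.
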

The above theorem establishes the isomorphism between persistent homology and persistent harmonic space as persistence modules, meaning they have isomorphic persistence diagrams. Specifically, for any real numbers $a\leq b$, the \emph{$(a,b)$-persistent harmonic space} is defined as
\begin{equation*}
\mathcal{H}^{a,b}(\mathcal{S})=hf_{a\to b}h(\mathcal{S}{a}),
\end{equation*}
which is naturally isomorphic to the $(a,b)$-persistent homology $H^{a,b}(\mathcal{S})$ with respect to the morphism $a\to b$. Here, the morphism $f{a\to b}:\mathcal{S}{a}\to \mathcal{S}{b}$ is induced by the morphism $a\to b$.
The \emph{$(a,b)$-persistent Laplacian} \cite{memoli2022persistent} is defined by
\begin{equation*}
  \Delta^{a,b}_{\mathcal{S}}=f^{\ast}_{a\to b}d\iota\iota^{\ast}d^{\ast}f_{a\to b}+d^{\ast}d.
\end{equation*}
Here, $\iota:\{x\in\mathcal{S}_{b}|dx\in f_{a\to b}(\mathcal{S}_{a})\}\hookrightarrow \mathcal{S}_{b}$ is an inclusion.
We present the persistent Hodge decomposition theorem for the persistence differential graded inner product space (Theorem \ref{theorem:persistencedecomposition}).
\begin{theorem}\label{introduction:theorem2}
Let $\mathcal{S}:(\mathbb{R},\leq)\to \mathbf{DGI}$ be a persistence differential graded inner product space. For any $a\leq b$, we have a direct sum decomposition of inner product spaces
\begin{equation*}
  \mathcal{S}_{a}= \ker \Delta^{a,b}_{\mathcal{S}}\oplus \im f_{a\to b}^{\ast}d\iota\iota^{\ast}d^{\ast}f_{a\to b}\oplus \im d^{\ast}d,
\end{equation*}
where $\ker \Delta^{a,b}_{\mathcal{S}}\cong\mathcal{H}^{a,b}(\mathcal{S})$.
\end{theorem}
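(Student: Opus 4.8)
The plan is to establish the decomposition by analyzing the persistent Laplacian $\Delta^{a,b}_{\mathcal{S}}$ as a self-adjoint operator on the inner product space $\mathcal{S}_a$ and applying a Hodge-theoretic argument adapted to the persistent setting. First I would verify that $\Delta^{a,b}_{\mathcal{S}} = f^{\ast}_{a\to b}d\iota\iota^{\ast}d^{\ast}f_{a\to b} + d^{\ast}d$ is self-adjoint with respect to the inner product on $\mathcal{S}_a$. Writing $P := f^{\ast}_{a\to b}d\iota\iota^{\ast}d^{\ast}f_{a\to b}$ (the ``up'' part) and $Q := d^{\ast}d$ (the ``down'' part), each summand is manifestly of the form $T^{\ast}T$ for an appropriate composite $T$ (using that $\iota\iota^{\ast}$ is the orthogonal projection onto the subspace $\{x\in\mathcal{S}_b \mid dx\in f_{a\to b}(\mathcal{S}_a)\}$ and hence is itself self-adjoint and idempotent), so both $P$ and $Q$ are positive semidefinite self-adjoint operators, and $\ker \Delta^{a,b}_{\mathcal{S}} = \ker P \cap \ker Q$.

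Next I would establish the three-way orthogonal decomposition. For any self-adjoint operator on a finite-dimensional inner product space one has $\mathcal{S}_a = \ker \Delta^{a,b}_{\mathcal{S}} \oplus \im \Delta^{a,b}_{\mathcal{S}}$, so the crux is to show $\im \Delta^{a,b}_{\mathcal{S}} = \im P \oplus \im Q$ as an orthogonal direct sum. The inclusion $\im \Delta^{a,b}_{\mathcal{S}} \subseteq \im P + \im Q$ is immediate from the definition of $\Delta^{a,b}_{\mathcal{S}} = P + Q$. For orthogonality of $\im P$ and $\im Q$, I would show $PQ = 0$ and $QP = 0$: this reduces to the identity $d^{\ast}d$ composed against $d\iota\iota^{\ast}d^{\ast}$, which should vanish because $d\circ d = 0$ in a differential graded space forces the relevant composites to annihilate one another (precisely, $\im Q \subseteq \im d^{\ast} = (\ker d)^{\perp}$ pairs trivially against $\im P \subseteq \ker$-type spaces cut out by $d$). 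Once $\im P \perp \im Q$ is known, $P+Q$ restricted to $\im P \oplus \im Q$ is injective, which upgrades the inclusion to the equality $\im \Delta^{a,b}_{\mathcal{S}} = \im P \oplus \im Q$, yielding the claimed three-summand orthogonal decomposition.

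Finally, the identification $\ker \Delta^{a,b}_{\mathcal{S}} \cong \mathcal{H}^{a,b}(\mathcal{S})$ would follow by unwinding the definition $\mathcal{H}^{a,b}(\mathcal{S}) = hf_{a\to b}h(\mathcal{S}_a)$ together with Theorem~\ref{introduction:theorem1}, which already provides the natural isomorphism between the harmonic-space functor and the homology functor at the persistence level; I expect the kernel of the persistent Laplacian to coincide on the nose with the image of the harmonic projection appearing in that construction. The main obstacle I anticipate is the orthogonality step $PQ = QP = 0$: unlike the classical Hodge decomposition where the up- and down-Laplacians manifestly anticommute via $d^2 = 0$, here the presence of the restriction map $f_{a\to b}$ and the projection $\iota\iota^{\ast}$ interposes nontrivial operators between the differentials, so I would need to check carefully that $\iota\iota^{\ast}$ being projection onto exactly the subspace whose image under $d$ lands in $f_{a\to b}(\mathcal{S}_a)$ is precisely what restores the vanishing of the cross terms. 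Verifying that compatibility is where the specific definition of $\iota$ does the essential work.
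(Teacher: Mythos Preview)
Your decomposition argument is essentially the paper's approach: the paper packages it as an abstract ``algebraic Hodge decomposition'' (Proposition~\ref{proposition:algebraic}) applied to the sequence $\Theta_{f_{a\to b}} \xrightarrow{f_{a\to b}^{\ast}d\iota} \mathcal{S}_a \xrightarrow{d} \mathcal{S}_a$, so the orthogonality of $\im P$ and $\im Q$ reduces to the single identity $d\circ(f_{a\to b}^{\ast}d\iota) = 0$. Note that this is not really a consequence of $\iota\iota^{\ast}$ being a projection; it uses that $f^{\ast}f = \mathrm{id}$ (Lemma~\ref{lemma:section}) together with the definition of $\Theta_f$: for $x\in\Theta_f$ one has $dx = f(v)$, hence $f^{\ast}d\iota(x) = f^{\ast}f(v) = v$, and then $f(dv) = df(v) = d^2x = 0$ forces $dv=0$ by injectivity of $f$.

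The real gap is in your last paragraph. The spaces $\ker\Delta^{a,b}_{\mathcal{S}} \subseteq \mathcal{S}_a$ and $\mathcal{H}^{a,b}(\mathcal{S}) = hf_{a\to b}h(\mathcal{S}_a) \subseteq \mathcal{S}_b$ live in different ambient spaces, so they cannot ``coincide on the nose'', and Theorem~\ref{introduction:theorem1} (the natural isomorphism $h\mathcal{S}\Rightarrow H\mathcal{S}$) says nothing about $\ker\Delta^{a,b}_{\mathcal{S}}$ at all. The paper supplies a separate argument (Proposition~\ref{proposition:isomorphism} and Corollary~\ref{corollary:isomorphism}): one first checks $\ker\Delta_f \subseteq h(V)$, then shows that the restriction $hf|_{\ker\Delta_f}\colon \ker\Delta_f \to hfh(V)$ is a bijection. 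Both injectivity and surjectivity require genuine work using the description $\ker\Delta_f = \ker d \cap \ker\iota^{\ast}d^{\ast}f$ and the definition of $\Theta_f$; your plan would need to incorporate this step rather than defer it to Theorem~\ref{introduction:theorem1}.
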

Theorems \ref{introduction:theorem1} and \ref{introduction:theorem2} are essentially based on the idea that the nullity of the $q$-th persistent Laplacian equals the $q$-th persistent Betti number in \cite{wang2020persistent,memoli2022persistent} and reformed in a more categorical framework in this paper.

In this paper, our main focus is on providing a categorization of algebraic Laplacians using Laplacian trees and investigating the stability of persistence Laplacian trees.
The Laplacian tree, as a collection of Laplacians on $V$, is defined as a pair $(V,A)$, where $V$ is a differential graded inner space and $A$ is a collection of linear transformations on $V$ such that each linear transformation is of the form
\begin{equation*}
\Delta_{f}=f^{\ast}d\iota\iota^{\ast}d^{\ast}f+d^{\ast}d,
\end{equation*}
where $f:V\to W$ is a morphism of differential graded inner spaces. Here, $\iota:{x\in W|dx\in f(V)}\hookrightarrow W$ is an inclusion.
In Section \ref{section:laplaciancategory}, we list some properties of Laplacian trees and show that the Laplacian tree contains the information of persistent homology.
The categorization of Laplacian trees serves as a fundamental framework for investigating the stability of Laplacians. As mentioned earlier, interleaving plays a crucial role in studying the algebraic stability of persistence modules.
Let $\mathcal{F},\mathcal{G}:(\mathbb{R},\leq)\to \mathfrak{C}$ be two persistence objects in $\mathfrak{C}^{\mathbb{R}}$. The \emph{interleaving distance} \cite{chazal2009proximity} between $\mathcal{F}$ and $\mathcal{G}$ is
\begin{equation*}
  d_{I}(\mathcal{F},\mathcal{G})=\inf\{\varepsilon\geq 0|\text{$\mathcal{F}$ and $\mathcal{G}$ are $\varepsilon$-interleaved}\}.
\end{equation*}
In this paper, for a given persistence differential graded inner product space $\mathcal{S}$, we construct a persistence Laplacian tree $\mathcal{L}_{\mathcal{S}}$. The construction is functorial. Moreover, we have the algebraic stability of persistence Laplacian trees (Theorem \ref{theorem:main2}).
\begin{theorem}[\textbf{The algebraic stability theorem}]\label{introduction:interleaving}
Let $\mathcal{S},\mathcal{T}$ be two persistence differential graded inner product spaces. Then the persistence Laplacian trees $\mathcal{L}_{\mathcal{S}}$ and $\mathcal{L}_{\mathcal{T}}$ are $\varepsilon$-interleaved if and only if $\mathcal{S}$ and $\mathcal{T}$ are $\varepsilon$-interleaved.
\end{theorem}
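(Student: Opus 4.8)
The plan is to deduce both implications from a single categorical mechanism: a functor between persistence categories that intertwines with the shift operation automatically preserves $\varepsilon$-interleavings. Write $\Sigma_\varepsilon$ for the shift sending a persistence object $\mathcal{F}$ to $a\mapsto \mathcal{F}_{a+\varepsilon}$, so that an $\varepsilon$-interleaving of $\mathcal{F},\mathcal{G}$ consists of natural transformations $\phi:\mathcal{F}\Rightarrow \Sigma_\varepsilon\mathcal{G}$ and $\psi:\mathcal{G}\Rightarrow \Sigma_\varepsilon\mathcal{F}$ whose composites $(\Sigma_\varepsilon\psi)\circ\phi$ and $(\Sigma_\varepsilon\phi)\circ\psi$ coincide with the internal structure morphisms $\mathcal{F}\Rightarrow \Sigma_{2\varepsilon}\mathcal{F}$ and $\mathcal{G}\Rightarrow \Sigma_{2\varepsilon}\mathcal{G}$. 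The key observation I would isolate as a lemma is that if a functor $F$ between persistence categories satisfies $F\circ\Sigma_\varepsilon=\Sigma_\varepsilon\circ F$ on objects and natural transformations and preserves the internal structure morphisms, then $F$ carries an $\varepsilon$-interleaving to an $\varepsilon$-interleaving. Once this principle is in place, the two directions of the theorem follow by applying it to two specific functors.

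For the direction asserting that an $\varepsilon$-interleaving of $\mathcal{S},\mathcal{T}$ produces one of $\mathcal{L}_{\mathcal{S}},\mathcal{L}_{\mathcal{T}}$, I would invoke the functoriality of the assignment $\mathcal{S}\mapsto\mathcal{L}_{\mathcal{S}}$ established in Section \ref{section:laplaciancategory}. First I would verify the shift-compatibility $\mathcal{L}_{\Sigma_\varepsilon\mathcal{S}}=\Sigma_\varepsilon\mathcal{L}_{\mathcal{S}}$ by unwinding the definition: the Laplacian tree at parameter $a$ of the shifted system collects the persistent Laplacians $\Delta^{a+\varepsilon,b+\varepsilon}_{\mathcal{S}}$, which is precisely the $\varepsilon$-shifted tail of the Laplacian tree of $\mathcal{S}$. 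Applying the construction functor to the interleaving transformations $\phi,\psi$ then yields morphisms $\mathcal{L}_{\phi},\mathcal{L}_{\psi}$ of persistence Laplacian trees, and functoriality converts the two triangle identities for $\mathcal{S},\mathcal{T}$ into the corresponding identities for $\mathcal{L}_{\mathcal{S}},\mathcal{L}_{\mathcal{T}}$.

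For the converse, I would use the forgetful functor $U$ sending a Laplacian tree $(V,A)$ to its underlying differential graded inner product space $V$, extended to persistence objects levelwise. Two facts make this work. First, $U\mathcal{L}_{\mathcal{S}}=\mathcal{S}$, since at each parameter $a$ the tree $(\mathcal{S}_{a},\{\Delta^{a,b}_{\mathcal{S}}\}_{b\geq a})$ has underlying space $\mathcal{S}_{a}$ and the underlying map of each structure morphism is $f_{a\to b}$. Second, $U$ commutes with $\Sigma_\varepsilon$ and preserves structure morphisms. Applying $U$ to the given interleaving of $\mathcal{L}_{\mathcal{S}},\mathcal{L}_{\mathcal{T}}$ therefore produces natural transformations $U\phi,U\psi$ between $\mathcal{S}$ and $\mathcal{T}$ satisfying the triangle identities, so that $\mathcal{S}=U\mathcal{L}_{\mathcal{S}}$ and $\mathcal{T}=U\mathcal{L}_{\mathcal{T}}$ are $\varepsilon$-interleaved. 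Note that this scheme does not require the two functors to be mutually inverse on all morphisms; the identity $U\circ\mathcal{L}_{(-)}=\mathrm{id}$ together with interleaving-preservation of each functor suffices for the equivalence.

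The main obstacle, which I expect to absorb most of the work, is verifying that the two functors are genuinely well defined on morphisms, and in particular that the interleaving naturality squares are simultaneously compatible with every persistent Laplacian. Because $\Delta^{a,b}_{\mathcal{S}}=f^{\ast}_{a\to b}d\iota\iota^{\ast}d^{\ast}f_{a\to b}+d^{\ast}d$ depends on the entire tail $\{\mathcal{S}_{b}\}_{b\geq a}$ through the inclusion $\iota:\{x\in\mathcal{S}_{b}\mid dx\in f_{a\to b}(\mathcal{S}_{a})\}\hookrightarrow\mathcal{S}_{b}$, I must check that a morphism coming from an interleaving intertwines each $\Delta^{a,b}$ with $\Delta^{a+\varepsilon,b+\varepsilon}$ in a way that respects these inclusions, so that the construction functor really does land a single DGI natural transformation in the category of Laplacian trees, and so that forgetting loses no compatibility. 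Establishing that the naturality of the interleaving forces this compatibility at all indices $b\geq a$ at once — thereby certifying that passing between $\mathbf{DGI}$ and its Laplacian trees neither creates nor destroys interleaving data — is the heart of the argument.
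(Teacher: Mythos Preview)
Your proposal is correct and follows essentially the same route as the paper's proof: both directions are obtained by applying, respectively, the construction functor $\mathscr{L}:\mathbf{DGI}^{\mathbb{R}}\to\mathbf{Lap}^{\mathbb{R}}$ and the forgetful functor $\mathscr{P}:\mathbf{Lap}^{\mathbb{R}}\to\mathbf{DGI}^{\mathbb{R}}$ (your $U$), invoking the fact that functors commuting with shifts preserve $\varepsilon$-interleavings together with the identity $\mathscr{P}\circ\mathscr{L}=\mathrm{id}$. The compatibility concern you raise in your final paragraph is exactly what is absorbed by the categorical setup of Section~\ref{section:laplaciancategory} (notably Proposition~\ref{proposition:definition} and the pushout definition of morphisms of Laplacian trees), so the paper regards it as already established rather than as part of the stability argument itself.
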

A straightforward corollary of Theorem \ref{introduction:interleaving} is as follows (Corollary \ref{corollary:distance}).
\begin{corollary}
Let $\mathcal{S},\mathcal{T}$ be two persistence differential graded inner product spaces. Then
\begin{equation*}
  d_{I}(\mathcal{L}_{\mathcal{S}},\mathcal{L}_{\mathcal{T}})=d_{I}(\mathcal{S},\mathcal{T}).
\end{equation*}
\end{corollary}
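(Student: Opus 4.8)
The plan is to read the result off directly from the biconditional in Theorem \ref{introduction:interleaving} through the definition of interleaving distance as an infimum, so essentially no new work is required. First I would name the two sets over which the infima are taken,
\begin{equation*}
  E_{\mathcal{L}}=\{\varepsilon\geq 0 \mid \text{$\mathcal{L}_{\mathcal{S}}$ and $\mathcal{L}_{\mathcal{T}}$ are $\varepsilon$-interleaved}\},\qquad
  E_{\mathcal{S}}=\{\varepsilon\geq 0 \mid \text{$\mathcal{S}$ and $\mathcal{T}$ are $\varepsilon$-interleaved}\},
\end{equation*}
so that by the definition of the interleaving distance recalled above we have $d_{I}(\mathcal{L}_{\mathcal{S}},\mathcal{L}_{\mathcal{T}})=\inf E_{\mathcal{L}}$ and $d_{I}(\mathcal{S},\mathcal{T})=\inf E_{\mathcal{S}}$.

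The key step is to observe that Theorem \ref{introduction:interleaving} is precisely the statement $E_{\mathcal{L}}=E_{\mathcal{S}}$ phrased pointwise: for each fixed $\varepsilon\geq 0$, the theorem's biconditional says that $\mathcal{L}_{\mathcal{S}}$ and $\mathcal{L}_{\mathcal{T}}$ are $\varepsilon$-interleaved if and only if $\mathcal{S}$ and $\mathcal{T}$ are $\varepsilon$-interleaved, which is exactly the assertion $\varepsilon\in E_{\mathcal{L}}\iff\varepsilon\in E_{\mathcal{S}}$. Hence the two subsets of $[0,\infty)$ coincide, and taking infima of identical sets gives $\inf E_{\mathcal{L}}=\inf E_{\mathcal{S}}$, which is the claimed equality of distances.

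There is no substantive obstacle in this argument: all of the analytic content — the construction of the interleaving morphisms between the persistence Laplacian trees, and their compatibility with the shifted commuting triangles — has already been absorbed into Theorem \ref{introduction:interleaving}, so the corollary is genuinely a formal consequence. The only point deserving a word of care is the degenerate case in which $\mathcal{S}$ and $\mathcal{T}$ admit no interleaving for any finite $\varepsilon$: then $E_{\mathcal{L}}=E_{\mathcal{S}}=\varnothing$, and under the usual convention $\inf\varnothing=+\infty$ the equality $d_{I}(\mathcal{L}_{\mathcal{S}},\mathcal{L}_{\mathcal{T}})=d_{I}(\mathcal{S},\mathcal{T})=+\infty$ continues to hold by the same set-equality argument.
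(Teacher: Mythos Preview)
Your proposal is correct and matches the paper's approach: the paper simply states the corollary immediately after Theorem~\ref{theorem:main2} (the algebraic stability theorem, equivalently Theorem~\ref{introduction:interleaving}) as a direct consequence of the definition of interleaving distance, without writing out any further argument. Your explicit unpacking via $E_{\mathcal{L}}=E_{\mathcal{S}}$ is exactly the intended reading, and your remark on the degenerate $\inf\varnothing=+\infty$ case is a harmless elaboration.
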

As applications of the algebraic stability theorem for persistence Laplacian trees, we consider real-valued functions on simplicial complexes and digraphs. A real-valued function on a simplicial complex $f:K\to \mathbb{R}$ is non-decreasing if $f(\sigma)\leq f(\tau)$ whenever $\sigma$ is a face of $\tau$.
Let $f$ and $g$ be two non-decreasing real-valued functions on a simplicial complex $K$. Recall that $\|f-g\|_{\infty}=\sup\limits_{\sigma\in K}|f(\sigma)-g(\sigma)|$ is the usual $L_{\infty}$-distance of real-valued functions on $K$. We obtain the following theorem (Theorem \ref{theorem:complex}).
\begin{theorem}
Let $f$ and $g$ be two non-decreasing real-valued functions on a simplicial complex $K$. Then
\begin{equation*}
d_{I}(\mathcal{L}^{f},\mathcal{L}^{g})\leq \|f-g\|_{\infty},
\end{equation*}
where $\mathcal{L}^{f}$ and $\mathcal{L}^{g}$ are the persistence Laplacian trees of $f$ and $g$ on $K$, respectively.
\end{theorem}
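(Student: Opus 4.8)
The plan is to reduce the claim to the already-proved identity of interleaving distances and then to the classical sublevel-set estimate. First I would make explicit the persistence differential graded inner product space attached to a non-decreasing function. For $a\in\mathbb{R}$ set $K^{f}_{a}=\{\sigma\in K\mid f(\sigma)\leq a\}$; since $f$ is non-decreasing, $K^{f}_{a}$ is a subcomplex of $K$, and $K^{f}_{a}\subseteq K^{f}_{b}$ whenever $a\leq b$. Taking $\mathcal{S}^{f}_{a}$ to be the simplicial chain complex $C_{\ast}(K^{f}_{a})$, equipped with the inner product that makes the simplices an orthonormal basis and with the usual boundary operator as differential, and taking the structure maps to be the inclusions $\mathcal{S}^{f}_{a}\hookrightarrow\mathcal{S}^{f}_{b}$, yields a functor $\mathcal{S}^{f}:(\mathbb{R},\leq)\to\mathbf{DGI}$ whose associated persistence Laplacian tree is $\mathcal{L}^{f}=\mathcal{L}_{\mathcal{S}^{f}}$, and likewise for $g$. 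By Corollary \ref{corollary:distance} one has $d_{I}(\mathcal{L}^{f},\mathcal{L}^{g})=d_{I}(\mathcal{S}^{f},\mathcal{S}^{g})$, so it suffices to prove $d_{I}(\mathcal{S}^{f},\mathcal{S}^{g})\leq\|f-g\|_{\infty}$.

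Next I would build the interleaving directly from the hypothesis on the functions. Put $\varepsilon=\|f-g\|_{\infty}$, so that $|f(\sigma)-g(\sigma)|\leq\varepsilon$ for every simplex $\sigma$. If $f(\sigma)\leq a$ then $g(\sigma)\leq a+\varepsilon$, whence $K^{f}_{a}\subseteq K^{g}_{a+\varepsilon}$; symmetrically $K^{g}_{a}\subseteq K^{f}_{a+\varepsilon}$. Passing to chain complexes, the induced inclusions $\varphi_{a}:\mathcal{S}^{f}_{a}\to\mathcal{S}^{g}_{a+\varepsilon}$ and $\psi_{a}:\mathcal{S}^{g}_{a}\to\mathcal{S}^{f}_{a+\varepsilon}$ send each simplex to itself, hence commute with the boundary operators and carry an orthonormal basis to a subset of an orthonormal basis; therefore they are isometric chain maps, i.e. morphisms in $\mathbf{DGI}$.

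I would then verify the two conditions defining an $\varepsilon$-interleaving: naturality of the families $\{\varphi_{a}\}$ and $\{\psi_{a}\}$ with respect to the structure maps of $\mathcal{S}^{f}$ and $\mathcal{S}^{g}$, and the triangle identities asserting that the composites $\psi_{a+\varepsilon}\circ\varphi_{a}$ and $\varphi_{a+\varepsilon}\circ\psi_{a}$ coincide with the respective structure maps from parameter $a$ to parameter $a+2\varepsilon$. Every map occurring here is an inclusion of subcomplexes that fixes each simplex, so all the relevant diagrams commute on basis elements and hence commute. Consequently $\mathcal{S}^{f}$ and $\mathcal{S}^{g}$ are $\varepsilon$-interleaved, giving $d_{I}(\mathcal{S}^{f},\mathcal{S}^{g})\leq\varepsilon=\|f-g\|_{\infty}$, and combining with Corollary \ref{corollary:distance} yields $d_{I}(\mathcal{L}^{f},\mathcal{L}^{g})\leq\|f-g\|_{\infty}$, as required.

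The conceptual weight has been front-loaded into Theorem \ref{introduction:interleaving} and Corollary \ref{corollary:distance}, so what remains is less an obstacle than a point demanding care: one must confirm that the sublevel-set inclusions are genuine morphisms of differential graded inner product spaces, that is, simultaneously chain maps and isometric embeddings, which relies essentially on the choice of the orthonormal-simplex inner product. With that in place the interleaving is forced. The only further subtlety is ensuring that the notion of $\varepsilon$-interleaving transported by Corollary \ref{corollary:distance} is the same one used here at the level of $\mathbf{DGI}$, so that the inequality established for $\mathcal{S}$ transfers verbatim to $\mathcal{L}$.
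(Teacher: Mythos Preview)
Your proposal is correct and follows essentially the same route as the paper: set $\varepsilon=\|f-g\|_{\infty}$, use the sublevel-set containments $K^{f}_{a}\subseteq K^{g}_{a+\varepsilon}$ and $K^{g}_{a}\subseteq K^{f}_{a+\varepsilon}$ to build an $\varepsilon$-interleaving, and then invoke Corollary~\ref{corollary:distance} (equivalently Theorem~\ref{theorem:main2}) to transfer the bound to the Laplacian trees. The only cosmetic difference is that the paper first constructs the interleaving in the category $\mathbf{Cpx}^{\hookrightarrow}$ of simplicial complexes and then applies the functor $\mathcal{C}$ to push it into $\mathbf{DGI}$, whereas you build the interleaving directly in $\mathbf{DGI}$; the verification that the inclusions are isometric chain maps, which you flag explicitly, is exactly what makes these two presentations equivalent.
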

The GLMY theory, introduced in \cite{grigor2012homologies,grigor2020path,grigor2017homologies}, provides a well-established topological invariant for digraphs and has significant potential for applications.
For a weighted digraph $(G,f)$, where $G$ is a digraph and $f$ is a real-valued function on its edge set, we have persistence inner product spaces $\mathcal{H}^{f}(G)$ and $H^{f}(G)$, which correspond to persistent harmonic space and persistent homology, respectively, as discussed in Section \ref{section:digraph}.
The following theorem (Theorem \ref{theorem:2digraph}) presents an algebraic stability theorem for digraphs.
\begin{theorem}
Let $(G_{1},f)$ and $(G_{2},g)$ be weighted digraphs. Then
\begin{equation*}
  d_{I}(\mathcal{H}^{f}(G_{1}),\mathcal{H}^{g}(G_{2}))=d_{I}(H^{f}(G_{1}),H^{g}(G_{2}))\leq d_{I}(\mathcal{L}^{(G_{1},f)},\mathcal{L}^{(G_{2},g)})\leq d_{I}((G_{1},f),(G_{2},g)).
\end{equation*}
\end{theorem}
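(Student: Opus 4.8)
The plan is to prove the chain relation by relation, identifying each link with one of the earlier structural results and reserving the genuinely new work for the last step. Throughout I write $\mathcal{S}=\mathcal{S}_{(G_{1},f)}$ and $\mathcal{T}=\mathcal{S}_{(G_{2},g)}$ for the persistence differential graded inner product spaces that the GLMY path-complex filtration associates to the weighted digraphs $(G_{1},f)$ and $(G_{2},g)$, so that by construction $\mathcal{L}^{(G_{1},f)}=\mathcal{L}_{\mathcal{S}}$ and $\mathcal{L}^{(G_{2},g)}=\mathcal{L}_{\mathcal{T}}$, while $\mathcal{H}^{f}(G_{1})=h\mathcal{S}$, $H^{f}(G_{1})=H\mathcal{S}$, with the analogous identifications for $G_{2}$.

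For the leftmost equality I would appeal directly to Theorem \ref{introduction:theorem1}: the natural isomorphism $h\Rightarrow H$ of functors into $\mathbf{Inn}$, applied to $\mathcal{S}$ and to $\mathcal{T}$, produces isomorphisms of persistence modules $\mathcal{H}^{f}(G_{1})\cong H^{f}(G_{1})$ and $\mathcal{H}^{g}(G_{2})\cong H^{g}(G_{2})$. An isomorphism of persistence modules is in particular a $0$-interleaving, so it conjugates any $\varepsilon$-interleaving between the harmonic-space persistence modules into an $\varepsilon$-interleaving between the homology persistence modules, and conversely; hence the two interleaving distances coincide.

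For the middle inequality I would use the fact recorded in Section \ref{section:laplaciancategory} that a Laplacian tree retains the persistent homology of its underlying space. Concretely, this yields a functor from the category of persistence Laplacian trees to the category of persistence modules sending $\mathcal{L}_{\mathcal{S}}$ to $H\mathcal{S}$. Since any functor transports the two interleaving morphisms and the commuting triangles that witness an interleaving, it carries an $\varepsilon$-interleaving to an $\varepsilon$-interleaving. Thus an $\varepsilon$-interleaving of $\mathcal{L}^{(G_{1},f)}$ and $\mathcal{L}^{(G_{2},g)}$ induces an $\varepsilon$-interleaving of $H^{f}(G_{1})$ and $H^{g}(G_{2})$, giving $d_{I}(H^{f}(G_{1}),H^{g}(G_{2}))\le d_{I}(\mathcal{L}^{(G_{1},f)},\mathcal{L}^{(G_{2},g)})$.

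For the rightmost inequality I would first apply Corollary \ref{corollary:distance} to the induced persistence differential graded inner product spaces, obtaining the equality $d_{I}(\mathcal{L}^{(G_{1},f)},\mathcal{L}^{(G_{2},g)})=d_{I}(\mathcal{S},\mathcal{T})$; it then suffices to prove $d_{I}(\mathcal{S},\mathcal{T})\le d_{I}((G_{1},f),(G_{2},g))$. This is the digraph counterpart of the simplicial bound in Theorem \ref{theorem:complex}, and it reduces to showing that the assignment $(G,f)\mapsto \mathcal{S}_{(G,f)}$ is a functor into $\mathbf{DGI}$ that carries an $\varepsilon$-interleaving of weighted digraphs to an $\varepsilon$-interleaving in $\mathbf{DGI}$. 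I expect the main obstacle to lie precisely here: one must verify that the path-complex construction is functorial into $\mathbf{DGI}$, i.e.\ that the morphisms of path complexes induced by a morphism of weighted digraphs are compatible with the inner product structure so that the Laplacian-tree morphisms $\Delta_{f}$ are well defined, and that an $\varepsilon$-interleaving of weighted digraphs genuinely descends to an $\varepsilon$-interleaving in $\mathbf{DGI}$. Once this compatibility is established the inequality is formal; indeed, if $d_{I}((G_{1},f),(G_{2},g))$ is itself taken to be the interleaving distance of the induced persistence differential graded inner product spaces, the rightmost relation becomes an equality, but only the inequality is required here.
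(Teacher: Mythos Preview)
Your proposal is correct and follows essentially the same three-step route as the paper: the equality via the natural isomorphism $h\Rightarrow H$ (the paper cites Theorem~\ref{theorem:nature}), the middle inequality via the fact that functors carry $\varepsilon$-interleavings to $\varepsilon$-interleavings (the paper cites \cite[Proposition~3.6]{bubenik2014categorification}), and the rightmost inequality via Corollary~\ref{corollary:distance}/Theorem~\ref{theorem:main2} together with the functoriality of the path-complex construction.

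Your hesitation in the last step is unnecessary, though. By the paper's definition, $d_{I}((G_{1},f),(G_{2},g))=d_{I}(\mathcal{D}^{f}_{G_{1}},\mathcal{D}^{g}_{G_{2}})$ is the interleaving distance in $(\mathbf{Digraph}^{\hookrightarrow})^{\mathbb{R}}$, and the paper has already recorded that $\Omega_{\ast}:\mathbf{Digraph}^{\hookrightarrow}\to\mathbf{DGI}$ is a functor: inclusions of digraphs induce inclusions $\Omega_{\ast}(G_{1})\hookrightarrow\Omega_{\ast}(G_{2})$, and since the inner product is the standard one on allowed paths, these inclusions are automatically isometric chain maps. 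Hence the inequality $d_{I}(\mathcal{S},\mathcal{T})\le d_{I}(\mathcal{D}^{f}_{G_{1}},\mathcal{D}^{g}_{G_{2}})$ is immediate, with no further verification needed. Your closing remark that the rightmost relation would become an equality if $d_{I}((G_{1},f),(G_{2},g))$ were \emph{defined} at the level of $\mathbf{DGI}$ is a misreading of the definition; it is defined one category earlier, in the category of persistence digraphs with inclusions.
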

\begin{corollary}
Let $(G,f)$ and $(G,g)$ be weighted digraphs. Then
\begin{equation*}
  d_{I}(\mathcal{H}^{f}(G),\mathcal{H}^{g}(G))=d_{I}(H^{f}(G),H^{g}(G))\leq d_{I}(\mathcal{L}^{f},\mathcal{L}^{g})\leq \|f-g\|_{\infty},
\end{equation*}
where $\mathcal{L}^{f}$ and $\mathcal{L}^{g}$ are the persistence Laplacian trees of functions $f$ and $g$ on the digraph $G$, respectively. Here, $\|f-g\|_{\infty}=\sup\limits_{e\in E}|f(e)-g(e)|$.
\end{corollary}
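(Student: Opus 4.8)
The plan is to deduce the equalities and the first two inequalities directly from Theorem~\ref{theorem:2digraph} by specializing to the case $G_{1}=G_{2}=G$, so that $\mathcal{L}^{(G_{1},f)}=\mathcal{L}^{f}$ and $\mathcal{L}^{(G_{2},g)}=\mathcal{L}^{g}$. With that substitution, every assertion in the corollary except the final inequality $d_{I}((G,f),(G,g))\leq \|f-g\|_{\infty}$ is an immediate instance of the theorem, since the theorem already yields $d_{I}(\mathcal{L}^{f},\mathcal{L}^{g})\leq d_{I}((G,f),(G,g))$. Hence the only genuinely new content is a comparison of the interleaving distance of the two persistence differential graded inner product spaces associated to $(G,f)$ and $(G,g)$ with the $L_{\infty}$-distance of the weight functions.

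First I would set $\varepsilon=\|f-g\|_{\infty}$ and recall the construction of the persistence object attached to a weighted digraph: for each real $a$ one forms the sublevel subdigraph $G_{a}^{f}=(V,\{e\in E:f(e)\leq a\})$, applies the GLMY path-homology functor to obtain a differential graded inner product space, and lets the subdigraph inclusions $G_{a}^{f}\hookrightarrow G_{b}^{f}$ for $a\leq b$ induce the structure morphisms. Because $|f(e)-g(e)|\leq \varepsilon$ for every edge $e$, the condition $f(e)\leq a$ forces $g(e)\leq a+\varepsilon$, so there are digraph inclusions $G_{a}^{f}\hookrightarrow G_{a+\varepsilon}^{g}$ and symmetrically $G_{a}^{g}\hookrightarrow G_{a+\varepsilon}^{f}$. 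Applying the path-homology functor to these inclusions yields families of morphisms that I claim form an $\varepsilon$-interleaving between $(G,f)$ and $(G,g)$.

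Next I would verify the two interleaving triangles. Since every map in sight is induced by an honest inclusion of subdigraphs, and inclusions compose strictly, the composite $G_{a}^{f}\hookrightarrow G_{a+\varepsilon}^{g}\hookrightarrow G_{a+2\varepsilon}^{f}$ agrees with the structure inclusion $G_{a}^{f}\hookrightarrow G_{a+2\varepsilon}^{f}$, and likewise after swapping $f$ and $g$; functoriality of path homology then turns these equalities of digraph inclusions into commutativity of the required triangles in $\mathbf{DGI}$. This shows that $(G,f)$ and $(G,g)$ are $\varepsilon$-interleaved, hence $d_{I}((G,f),(G,g))\leq \varepsilon=\|f-g\|_{\infty}$, which supplies the missing inequality and, chained with Theorem~\ref{theorem:2digraph}, gives $d_{I}(\mathcal{L}^{f},\mathcal{L}^{g})\leq \|f-g\|_{\infty}$.

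The main obstacle I anticipate is confirming that the inclusion-induced maps are genuinely morphisms in the category $\mathbf{DGI}$ of differential graded inner product spaces, not merely chain maps: one must check that the path-homology functor sends a subdigraph inclusion to a map that respects the inner product structure. This is resolved by observing that the natural basis of $\partial$-invariant paths of the smaller digraph is carried to basis elements of the larger digraph, so the induced map is an isometric embedding onto its image and in particular a $\mathbf{DGI}$-morphism; once this is in place, the interleaving and the final inequality follow, and combining with Theorem~\ref{theorem:2digraph} completes the proof.
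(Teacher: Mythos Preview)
Your proposal is correct and follows essentially the same route as the paper: specialize Theorem~\ref{theorem:2digraph} to $G_{1}=G_{2}=G$, then bound $d_{I}((G,f),(G,g))$ by $\|f-g\|_{\infty}$ via the obvious sublevel-digraph inclusions, exactly as in the proof of Theorem~\ref{theorem:complex} (which is all the paper's one-line proof invokes).

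One small wording issue: in your last paragraph you resolve the $\mathbf{DGI}$-morphism concern by saying that ``the natural basis of $\partial$-invariant paths of the smaller digraph is carried to basis elements of the larger digraph.'' In general $\Omega_{\ast}(G)$ need not have a basis consisting of elementary paths (cf.\ Example~\ref{example:digraph}, where $e_{013}-e_{023}\in\Omega_{2}$ but neither summand lies in $\Omega_{2}$). The correct justification is the one the paper uses implicitly when defining the functor $\Omega_{\ast}:\mathbf{Digraph}^{\hookrightarrow}\to\mathbf{DGI}$: the inner product on $\Omega_{\ast}(G)$ is the one \emph{restricted} from $\mathcal{A}_{\ast}(G)$, and a digraph inclusion sends allowed elementary paths to allowed elementary paths, so $\mathcal{A}_{\ast}(G_{1})\hookrightarrow\mathcal{A}_{\ast}(G_{2})$ is an isometry on the nose, and hence so is its restriction $\Omega_{\ast}(G_{1})\hookrightarrow\Omega_{\ast}(G_{2})$. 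With that adjustment your argument is complete. Note also that $d_{I}((G,f),(G,g))$ is defined in the paper as the interleaving distance of the persistence \emph{digraphs} $\mathcal{D}^{f}_{G}$ and $\mathcal{D}^{g}_{G}$ in $\mathbf{Digraph}^{\hookrightarrow}$, so you can (and the paper does) establish the interleaving already at the digraph level before ever applying $\Omega_{\ast}$.
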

This paper is organized as follows. In the next section, we introduce the notations concerning persistence objects and $\varepsilon$-interleavings. Section \ref{section:Laplacians} focuses on the study of persistent Laplacians on differential graded inner spaces. In Section \ref{section:laplaciancategory}, we present the construction of the category of Laplacian trees. The algebraic stability theorem for persistence Laplacian trees is shown in Section \ref{section:main}. In the last section, as an application, we consider the persistent Laplacians and the algebraic stability of real-valued functions on digraphs.

\section{Preliminaries}
This section recalls the notations and definitions established on persistent homology. Categorical language is accepted to help build the algebraic stability theorem.
The idea of the $\varepsilon$-interleaving was first introduced by F. Chazal, D. Cohen-Steiner, and M. Glisse et al to study the stability for persistence modules or persistence diagrams. The developed method based on interleavings generalizes many classical results concerning the stability for persistence diagrams. To study the stability for persistent Laplacians, we inherit the theoretical framework on the $\varepsilon$-interleaving.

\subsection{Persistence objects}
Let $\mathfrak{C}$ be a category. A \emph{persistence object} on $\mathfrak{C}$ is a functor $\mathcal{F}:(\mathbb{R},\leq)\to \mathfrak{C}$ from the category $(\mathbb{R},\leq)$ to the category $\mathfrak{C}$.
Some familiar concepts in persistent homology theory can be restated in the terms of persistence objects.

\begin{example}
Let $\mathbf{Simp}$ be the category of simplicial complexes. A functor $\mathcal{F}:(\mathbb{R},\leq)\to \mathbf{Simp}$ is a persistence simplicial complex or a filtration of simplicial complexes. Let $X$ be a data set, the \v{C}ech complex \cite{carlsson2009topology} and the Vietoris-Rips complex \cite{vietoris1927hoheren} on $X$ are persistence simplicial complexes parameterized by the distance.
\end{example}
\begin{example}
Let $\mathbf{Vec}_{\mathbb{K}}$ be the category of vector space over a field $\mathbb{K}$. A functor $\mathcal{F}:(\mathbb{R},\leq)\to \mathbf{Vec}_{\mathbb{K}}$ is a persistence ($\mathbb{K}$-)module or a persistence $\mathbb{K}$-vector space.
\end{example}
\begin{example}
Let $\mathbf{Chain}_{\mathbb{K}}$ be the category of chain complexes over a field $\mathbb{K}$. A functor $\mathcal{F}:(\mathbb{R},\leq)\to \mathbf{Chain}_{\mathbb{K}}$ is a persistence chain complex.
\end{example}

\begin{remark}
Let $(X,\leq)$ be a poset. We can regard $X$ as a category with elements in $X$ as objects and the binary relations in $\leq$ as morphisms. An \emph{$(X,\leq)$-persistence} object on $\mathfrak{C}$ is a functor $\mathcal{F}:(X,\leq)\to \mathfrak{C}$ from the category $(X,\leq)$ to the category $\mathfrak{C}$. For example, the posets $(\mathbb{Z},\leq)$ and $(\mathbb{R}^{n},\leq)$ lead to the usual discrete persistence and multi-parameter persistence, respectively.
\end{remark}

All the persistence objects on $\mathfrak{C}$ consist of a category $\mathfrak{C}^{\mathbb{R}}$. More precisely, $\mathfrak{C}^{\mathbb{R}}$ is a category with
\begin{itemize}
  \item objects: all the functors $\mathcal{F}:(\mathbb{R},\leq)\to \mathfrak{C}$;
  \item morphisms: all the natural transformation $T: \mathcal{F}\Rightarrow \mathcal{G}$ between functors.
\end{itemize}
According to the terminology in \cite{bubenik2014categorification}, we can also call $\mathfrak{C}^{\mathbb{R}}$ the category of $(\mathbb{R},\leq)$-indexed diagrams in the category $\mathfrak{C}$.
In \cite{edelsbrunner2015persistent,bauer2020persistence}, the authors showed that the barcodes can be equivalently described by the persistence matchings.
Moreover, they give an equivalence functor $\mathbf{Barc}\to \mathbf{Mch}^{\mathbb{R}}$ from the category of barcodes to the category of persistence matchings. Thus one has ``equivalences'' among the category of (finite-type) persistence modules, the category of (finite) barcodes, and the category of (finite) matchings.
\begin{equation*}
  \xymatrix{(\mathbf{Vec}_{\mathbb{K}})^{\mathbb{R}}\ar@{<=>}[r]&\mathbf{Barc}\ar@{<=>}[r]&\mathbf{Mch}^{\mathbb{R}}}
\end{equation*}
In Section \ref{subsection:interleaving}, we will review the definition of the $\varepsilon$-interleaving between persistence objects, which plays an important role in the algebraic stability theorem.

\subsection{Interleavings of persistence objects}\label{subsection:interleaving}

%
%

Let $T_{x}:(\mathbb{R},\leq)\to (\mathbb{R},\leq)$ be a functor given by $T_{x}(a)=a+x$ for $a,x\in \mathbb{R}$. Then $T_{x}$ induces a functor $\Sigma^{x}:\mathfrak{C}^{\mathbb{R}}\to \mathfrak{C}^{\mathbb{R}}$ of the categories of persistence objects given by $(\Sigma^{x}\mathcal{F})(a)=\mathcal{F}(a+x)$. Note that $\Sigma^{x}|_{\mathcal{F}}:\mathcal{F}\to \Sigma^{x}\mathcal{F}$ is a morphism in the category $\mathfrak{C}^{\mathbb{R}}$ of persistence objects. 

\begin{definition}
Let $\mathcal{F},\mathcal{G}:(\mathbb{R},\leq)\to \mathfrak{C}$ be two persistence objects. An \emph{$\varepsilon$-interleaving between $\mathcal{F}$ and $\mathcal{G}$} consists of two morphisms $\phi:\mathcal{F}\to \Sigma^{\varepsilon}\mathcal{G}$ and $\psi:\mathcal{G}\to \Sigma^{\varepsilon}\mathcal{F}$ in $\mathfrak{C}^{\mathbb{R}}$ such that $(\Sigma^{\varepsilon}\psi)\phi=\Sigma^{2\varepsilon}|_{\mathcal{F}}$ and $(\Sigma^{\varepsilon}\phi)\psi=\Sigma^{2\varepsilon}|_{\mathcal{G}}$. We say that $\mathcal{F}$ and $\mathcal{G}$ are \emph{$\varepsilon$-interleaved}.
\end{definition}

\begin{remark}
The $\varepsilon$-interleaving between $\mathcal{F}$ and $\mathcal{G}$ defined above can be represented by the following commutative diagrams.
\begin{equation*}
  \xymatrix@=0.6cm{
  &\Sigma^{\varepsilon}\mathcal{G}\ar@{->}[rd]^{\Sigma^{\varepsilon}\psi}&\\
  \mathcal{F}\ar@{->}[ru]^{\phi}\ar@{->}[rr]^{\Sigma^{2\varepsilon}|_{\mathcal{F}}}&&\Sigma^{2\varepsilon}\mathcal{F}
  }\qquad \qquad
  \xymatrix@=0.6cm{
  &\Sigma^{\varepsilon}\mathcal{F}\ar@{->}[rd]^{\Sigma^{\varepsilon}\phi}&\\
  \mathcal{G}\ar@{->}[ru]^{\psi}\ar@{->}[rr]^{\Sigma^{2\varepsilon}|_{\mathcal{G}}}&&\Sigma^{2\varepsilon}\mathcal{G}
  }
\end{equation*}
If $\varepsilon=0$, the above equations reduce to $\psi\phi=\mathrm{id}|_{\mathcal{F}}$ and $\phi\psi=\mathrm{id}|_{\mathcal{G}}$. This shows that if $\mathcal{F}$ and $\mathcal{G}$ are $0$-interleaved, then they are isomorphic.
\end{remark}

We recall the definition of interleaving distance.
\begin{definition}\label{definition:distance}
Let $\mathcal{F},\mathcal{G}:(\mathbb{R},\leq)\to \mathfrak{C}$ be two persistence objects. The \emph{interleaving distance between $\mathcal{F}$ and $\mathcal{G}$} is
\begin{equation*}
  d_{I}(\mathcal{F},\mathcal{G})=\inf\{\varepsilon\geq 0|\text{$\mathcal{F}$ and $\mathcal{G}$ are $\varepsilon$-interleaved}\}.
\end{equation*}
\end{definition}

If $\mathcal{F},\mathcal{G}$ are $\varepsilon$-interleaved, we have $d(\mathcal{F},\mathcal{G})\leq \varepsilon$. In \cite{bubenik2014categorification}, the function $d_{I}$ was proved to be an extended pseudometric on the class of  $(\mathbb{R},\leq)$-indexed diagrams in $\mathfrak{C}$. At last, recall that there are constructions
\begin{equation*}
  \xymatrix{(\mathbf{Vec}_{\mathbb{K}})^{\mathbb{R}}\ar@{->}[r]^{B}&\mathbf{Barc}\ar@{->}[r]^{E}&\mathbf{Mch}^{\mathbb{R}}}.
\end{equation*}
Here, $E$ is a functor while $B$ is not always a functor. In \cite{bauer2020persistence}, the authors showed that
\begin{equation*}
  d_{I}(\mathcal{F},\mathcal{G})=d_{I}(B(\mathcal{F}),B(\mathcal{G}))=d_{I}(EB(\mathcal{F}),EB(\mathcal{G})).
\end{equation*}
To establish the interleavings for Laplacians, one of the fundamental tasks is to categorize the Laplacians. This involves providing a categorical framework that captures the essential properties and structure of Laplacians.

\section{The persistent Hodge decomposition}\label{section:Laplacians}
In this paper, the ground field considered is assumed to be the real number field $\mathbb{R}$.
Let $(V,d)$ be a differential graded inner product space over $\mathbb{R}$ with an inner product $\langle\cdot,\cdot\rangle$.
From now on, all the differential graded inner product spaces considered are assumed to be finite dimensional. In this section, we assume that the inner product is homogeneous, meaning that $\langle x, y \rangle = 0$ for all $x \in V_i$, $y \in V_j$, and $i \neq j$. Recall that the direct sum of inner product spaces $V$ and $W$, denoted as $V \oplus W$, is an inner product space with the following inner product:
\begin{equation*}
  \langle x,y\rangle_{V\oplus W}=\langle x|_{V},y|_{V}\rangle_{V}+\langle x|_{W},y|_{W}\rangle_{W},
\end{equation*}
where $x|_V$ and $y|_V$ represent the direct summands of $x$ and $y$ on $V$, respectively.
Given a linear operator $T: V \to V$, there exists a unique linear operator $T^{\ast}$ such that $\langle Tx,y\rangle=\langle x,T^{\ast}y\rangle$ for all $x,y\in V$. We call $T^{\ast}$ the \emph{adjoint operator} of $T$. Obviously, we have $T^{\ast\ast}=T$ and $\langle x,Ty\rangle=\langle T^{\ast}x,y\rangle$ for all $x,y\in V$.

\begin{proposition}[Algebraic Hodge decomposition]\label{proposition:algebraic}
Let $V'\stackrel{\phi}{\to} V\stackrel{\psi}{\to} V''$ be linear maps of inner product spaces such that $\psi\phi=0$. Then there is a direct sum decomposition of inner product spaces
\begin{equation*}
  V=\im \phi\phi^{\ast}\oplus \im \psi^{\ast}\psi \oplus \mathcal{H},
\end{equation*}
where $\mathcal{H}=\ker \psi\cap \ker \phi^{\ast}=\ker \Delta$. Here, $\Delta=\phi\phi^{\ast}+ \psi^{\ast}\psi$.
\end{proposition}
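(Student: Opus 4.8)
The plan is to reduce the statement to elementary orthogonality relations among the images and kernels of $\phi$, $\psi$, and their adjoints, using that in a finite-dimensional inner product space a subspace is recovered as its own double orthogonal complement. The first step would be to record the two reduction identities $\im\phi\phi^{\ast}=\im\phi$ and $\im\psi^{\ast}\psi=\im\psi^{\ast}$, which let me replace the three summands in the statement by $\im\phi$, $\im\psi^{\ast}$, and $\mathcal{H}$. For the first identity, note that $\phi\phi^{\ast}$ is self-adjoint and that $\langle\phi\phi^{\ast}x,x\rangle=\|\phi^{\ast}x\|^{2}$, so $\ker\phi\phi^{\ast}=\ker\phi^{\ast}$; since a self-adjoint operator $T$ satisfies $\im T=(\ker T)^{\perp}$, this gives $\im\phi\phi^{\ast}=(\ker\phi^{\ast})^{\perp}=\im\phi$, the last equality using $\ker\phi^{\ast}=(\im\phi)^{\perp}$ together with finite-dimensionality. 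The identity $\im\psi^{\ast}\psi=\im\psi^{\ast}$ follows symmetrically from $\ker\psi^{\ast}\psi=\ker\psi$.

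Next I would verify that the three subspaces $\im\phi$, $\im\psi^{\ast}$, and $\mathcal{H}$ are pairwise orthogonal. The relation $\im\phi\perp\im\psi^{\ast}$ is precisely where the hypothesis $\psi\phi=0$ enters: for any $x\in V'$ and $y\in V''$ one has $\langle\phi x,\psi^{\ast}y\rangle=\langle\psi\phi x,y\rangle=0$. Since $\mathcal{H}=\ker\psi\cap\ker\phi^{\ast}$ while $\ker\phi^{\ast}=(\im\phi)^{\perp}$ and $\ker\psi=(\im\psi^{\ast})^{\perp}$, the space $\mathcal{H}$ is orthogonal to both $\im\phi$ and $\im\psi^{\ast}$.

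To see that these three orthogonal pieces exhaust $V$, I would compute the orthogonal complement of their span, namely $(\im\phi\oplus\im\psi^{\ast})^{\perp}=(\im\phi)^{\perp}\cap(\im\psi^{\ast})^{\perp}=\ker\phi^{\ast}\cap\ker\psi=\mathcal{H}$, so that $V=\im\phi\oplus\im\psi^{\ast}\oplus\mathcal{H}$ as an orthogonal direct sum, which is the asserted decomposition. It remains to identify $\mathcal{H}$ with $\ker\Delta$: expanding $\langle\Delta x,x\rangle=\langle\phi\phi^{\ast}x,x\rangle+\langle\psi^{\ast}\psi x,x\rangle=\|\phi^{\ast}x\|^{2}+\|\psi x\|^{2}$ shows $\Delta$ is positive semidefinite, so $\Delta x=0$ if and only if $\phi^{\ast}x=0$ and $\psi x=0$, giving $\ker\Delta=\ker\phi^{\ast}\cap\ker\psi=\mathcal{H}$.

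I expect no serious obstacle here, as the whole argument is linear algebra over $\mathbb{R}$. The one place demanding care is the reduction $\im\phi\phi^{\ast}=\im\phi$, where one must be careful not to conflate $\ker\phi\phi^{\ast}$ with $\ker\phi$ (they need not agree), and the implicit appeal to finite-dimensionality that makes $(W^{\perp})^{\perp}=W$ valid; in an infinite-dimensional setting one would instead need closedness hypotheses on the relevant images.
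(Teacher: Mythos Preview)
Your proof is correct and follows essentially the same structure as the paper's: establish pairwise orthogonality of the three summands, show that the orthogonal complement of the first two is exactly $\mathcal{H}$, and identify $\mathcal{H}$ with $\ker\Delta$ via the positive-semidefiniteness computation $\langle\Delta x,x\rangle=\|\phi^{\ast}x\|^{2}+\|\psi x\|^{2}$. The only cosmetic difference is that you first reduce $\im\phi\phi^{\ast}$ to $\im\phi$ (and $\im\psi^{\ast}\psi$ to $\im\psi^{\ast}$) before proceeding, whereas the paper works directly with the composite operators throughout this proposition and defers that reduction to the subsequent corollary.
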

\begin{proof}
For any $x,y\in V$, we have
\begin{equation*}
  \langle \phi\phi^{\ast} (x),\psi^{\ast}\psi(y)\rangle=\langle \psi\phi\phi^{\ast} (x),\psi(y)\rangle=0.
\end{equation*}
Thus we obtain $\im \phi\phi^{\ast}\perp \im \psi^{\ast}\psi$. A similar calculation shows that $\im \phi\phi^{\ast}\perp \mathcal{H}$ and $\im \psi^{\ast}\psi\perp \mathcal{H}$. Thus we have $\mathcal{H}\subseteq (\im \phi\phi^{\ast}\oplus \im \psi^{\ast}\psi)^{\perp}$.
To prove the decomposition, we will show $(\im \phi\phi^{\ast}\oplus \im \psi^{\ast}\psi)^{\perp}\subseteq \mathcal{H}$. Note that $(\im \phi\phi^{\ast}\oplus \im \psi^{\ast}\psi)^{\perp}=(\im \phi\phi^{\ast})^{\perp}\cap (\im \psi^{\ast}\psi)^{\perp}$. For $x\in (\im \phi\phi^{\ast})^{\perp}\cap (\im \psi^{\ast}\psi)^{\perp}$, one has $\langle x,\phi\phi^{\ast}(y)\rangle=0$ for all $y\in V$. It follows that $\langle \phi\phi^{\ast}(x),y\rangle=0$ for all $y\in V$. This shows that $\phi\phi^{\ast}(x)=0$, and then we have $x\in \ker \phi^{\ast}$ by the positive definiteness of inner product. Similarly, one has $x\in \ker \psi$. Hence, we obtain that $(\im \phi\phi^{\ast}\oplus \im \psi^{\ast}\psi)^{\perp}\subseteq \mathcal{H}$. Note that $\im \phi\phi^{\ast}$, $\im \psi^{\ast}\psi$ and $\mathcal{H}$ are inner product subspaces of $V$.

It is obvious that $\mathcal{H}\subseteq\ker \Delta$. On the other hand, if $x\in\ker \Delta$, one has
\begin{equation*}
  0=\langle\Delta x,\Delta x\rangle=\langle\phi\phi^{\ast} (x),\phi\phi^{\ast} (x)\rangle+\langle\psi^{\ast}\psi(y),\psi^{\ast}\psi(y)\rangle.
\end{equation*}
By the positive definiteness of the inner product, we have $\phi\phi^{\ast} (x)=\psi^{\ast}\psi(y)=0$. Note that
\begin{equation*}
  \langle \phi^{\ast} (x),\phi^{\ast} (x)\rangle=\langle \phi\phi^{\ast} (x),x\rangle=0.
\end{equation*}
It follows that $\phi^{\ast}(x)=0$. Similarly, one has $\psi(y)=0$. Hence, we obtain $\ker\Delta\subseteq \ker \psi\cap \ker \phi^{\ast}=\mathcal{H}$. It follows that $\mathcal{H}=\ker\Delta$.
\end{proof}

\begin{corollary}\label{corollary:decomposition}
Let $(V,d)$ be a differential graded inner product space. Then there is a direct sum decomposition of inner product spaces
\begin{equation*}
  V=\im d \oplus \im d^{\ast} \oplus \mathcal{H},
\end{equation*}
where $\mathcal{H}=\ker d\cap \ker d^{\ast}\cong H(V,d)$.
\end{corollary}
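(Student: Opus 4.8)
The plan is to obtain this as a degreewise application of the Algebraic Hodge decomposition (Proposition \ref{proposition:algebraic}). Writing $V=\bigoplus_q V_q$ with differential $d_q:=d|_{V_q}:V_q\to V_{q-1}$, the relation $d^2=0$ gives $d_{q-1}d_q=0$, so for each fixed degree $q$ I would feed the composable pair $V_{q+1}\xrightarrow{\phi}V_q\xrightarrow{\psi}V_{q-1}$ with $\phi=d_{q+1}$ and $\psi=d_q$ into Proposition \ref{proposition:algebraic}, the hypothesis $\psi\phi=d^2=0$ being satisfied. This immediately yields the orthogonal decomposition $V_q=\im\phi\phi^{\ast}\oplus\im\psi^{\ast}\psi\oplus\mathcal{H}_q$ with $\mathcal{H}_q=\ker\psi\cap\ker\phi^{\ast}=\ker d_q\cap\ker d_q^{\ast}=\ker\Delta_q$, where $\Delta_q=d_{q+1}d_{q+1}^{\ast}+d_q^{\ast}d_q$.

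The first thing to reconcile is that the proposition produces the summands $\im\phi\phi^{\ast}$ and $\im\psi^{\ast}\psi$, whereas the corollary asserts them in the cleaner form $\im d$ and $\im d^{\ast}$. For this I would record the elementary identity $\im(TT^{\ast})=\im T$ valid for any linear map $T$ of inner product spaces: the operator $TT^{\ast}$ is self-adjoint and $\ker(TT^{\ast})=\ker T^{\ast}$, since $TT^{\ast}x=0$ forces $\langle T^{\ast}x,T^{\ast}x\rangle=\langle TT^{\ast}x,x\rangle=0$ and hence $T^{\ast}x=0$ by positive-definiteness; consequently $\im(TT^{\ast})=(\ker TT^{\ast})^{\perp}=(\ker T^{\ast})^{\perp}=\im T$. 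Taking $T=\phi=d_{q+1}$ gives $\im\phi\phi^{\ast}=\im d_{q+1}$, and taking $T=\psi^{\ast}=d_q^{\ast}$ gives $\im\psi^{\ast}\psi=\im\psi^{\ast}=\im d_q^{\ast}$. Assembling over all degrees identifies $\bigoplus_q\im d_{q+1}=\im d$, $\bigoplus_q\im d_q^{\ast}=\im d^{\ast}$, and $\bigoplus_q\mathcal{H}_q=\ker d\cap\ker d^{\ast}=:\mathcal{H}$, which establishes the decomposition $V=\im d\oplus\im d^{\ast}\oplus\mathcal{H}$.

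It remains to prove $\mathcal{H}\cong H(V,d)$, for which I would show degreewise that the harmonic summand is a canonical set of representatives, i.e. $\ker d_q=\im d_{q+1}\oplus\mathcal{H}_q$. Both $\im d_{q+1}$ and $\mathcal{H}_q$ sit inside $\ker d_q$ (the former because $d^2=0$, the latter by definition), while the third summand meets $\ker d_q$ trivially: if $x=d_q^{\ast}y\in\ker d_q$ then $d_qd_q^{\ast}y=0$, so $\langle d_q^{\ast}y,d_q^{\ast}y\rangle=\langle d_qd_q^{\ast}y,y\rangle=0$ and $x=0$. Restricting the decomposition of $V_q$ to $\ker d_q$ therefore gives $\ker d_q=\im d_{q+1}\oplus\mathcal{H}_q$, so the composite $\mathcal{H}_q\hookrightarrow\ker d_q\twoheadrightarrow H_q(V,d)$ is a linear isomorphism; since $\mathcal{H}_q$ is exactly the orthogonal complement of $\im d_{q+1}$ in $\ker d_q$, this is an isometry onto $H_q(V,d)$ with its induced inner product, and summing over $q$ finishes the argument. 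The only genuine content beyond invoking Proposition \ref{proposition:algebraic} is the identity $\im(TT^{\ast})=\im T$ together with the harmonic-representative argument; both rest on the positive-definiteness of the inner product, so I expect no serious obstacle, the main care being the bookkeeping of degrees.
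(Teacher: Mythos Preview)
Your proposal is correct and follows essentially the same route as the paper: both invoke Proposition~\ref{proposition:algebraic} and then simplify $\im dd^{\ast}$ to $\im d$ (the paper via the sandwich $dd^{\ast}dV\subseteq dd^{\ast}V\subseteq dV$ combined with $dV=dd^{\ast}dV$, you via the general identity $\im TT^{\ast}=\im T$), and both identify $\mathcal{H}$ with homology by showing each class has a unique harmonic representative. The only cosmetic difference is that the paper treats $d$ globally rather than degreewise, but this is immaterial.
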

\begin{proof}
By Proposition \ref{proposition:algebraic}, we have a direct sum decomposition of inner product spaces
\begin{equation*}
  V=\im dd^{\ast}\oplus \im d^{\ast}d \oplus \mathcal{H},
\end{equation*}
where $\mathcal{H}=\ker d\cap \ker d^{\ast}=\ker \Delta$. Here, $\Delta=dd^{\ast}+d^{\ast}d$. It follows that $dV=dd^{\ast}dV$. Note that $dd^{\ast}dV\subseteq dd^{\ast}V\subseteq dV$. One has $dd^{\ast}V=dV$. Thus the desired decomposition follows.

Consider the inclusion $i:\mathcal{H}\hookrightarrow V$, which gives a chain morphism $i:(\mathcal{H},0)\hookrightarrow (V,d)$. It induces an isomorphism $H(i):\mathcal{H}\cong H(\mathcal{H})\to H(V,d)$. Indeed, if $H(i)(x)=0$ for $x\in \mathcal{H}$, we have $x=i(x)=dz$ for some $z\in V$. But $\mathcal{H}\perp dV$. It follows that $x=0$ and $H(i)$ is injective.
For each $\alpha\in H(V,d)$, we can choose a representative $y\in\ker d$ of $\alpha$. By the above decomposition, we can write $y=z+dw$ for some $z\in \mathcal{H}$ and $w\in V$. Thus one has $H(i)z=\alpha$. Hence, $i$ is a quasi-isomorphism.
\end{proof}

Let $(V,d)$ be a differential graded inner product space. We define the \emph{Laplacian} $\Delta:V\to V$ as $\Delta=dd^{\ast}+d^{\ast}d$. The space $\mathcal{H}$ is called the \emph{harmonic space}. By Proposition \ref{proposition:algebraic}, we have $\ker \Delta=\mathcal{H}$.

A \emph{morphism $f:V\to W$ of differential graded inner product spaces} is a chain map such that
\begin{equation*}
  \langle x,y\rangle_{V}=\langle f(x),f(y)\rangle_{W}.
\end{equation*}
Let $\mathbf{DGI}$ be the category of differential graded inner product spaces of finite dimension over $\mathbb{R}$. If there is no ambiguity, we will always denote the differentials in $V$ and $W$ by $d$ and denote the inner products on $V$ and $W$ by $\langle\cdot,\cdot\rangle$ with no subscript.
\begin{remark}
Generally, the algebraic Hodge decomposition is not natural with respect to the morphism of differential graded inner product spaces. For example, let $V=\mathrm{span}\{a,da\}$ with the standard inner product $\langle a,a\rangle=\langle da,da\rangle=1$ and $\langle a,da\rangle=0$. Let $W=\mathrm{span}\{b,db,x\}$, $dx=0$ with inner product $\langle db,db\rangle =1$, $\langle b,b\rangle =\langle x,x\rangle= 1/2$ and $\langle b,db\rangle=\langle b,x\rangle=\langle db,x\rangle=0$. Let $f:V\to W$ be a morphism of differential graded inner product spaces given by
\begin{equation*}
  f(a)=b+x,\quad f(da)=db.
\end{equation*}
Let $p_{V},p_{W}$ be the projections of $V,W$ to the direct summands $d^{\ast}V,d^{\ast}W$, respectively. Then we have
\begin{equation*}
  p_{W}f(a+da)=p_{W}(f(a)+f(da))=p_{W}(b+x+db)=b
\end{equation*}
and
\begin{equation*}
  fp_{V}(a+da)=f(a)=b+x.
\end{equation*}
This shows that $p_{W}f\neq fp_{V}$.
\end{remark}

Our goal is to study the stability of persistent Laplacians. Hence, we hope to obtain a well-behaved Laplacian.  At least we expect the Laplacian operator to satisfy the functorial property in some sense. Another noteworthy problem is how to find a specific expression of the Hodge decomposition on each direct sum component. In other words, we want to get the decomposition at the mapping level, not just at the space level. The homotopy retract decomposition provides a way to deal with such a problem.

\begin{definition}
Let $M,N$ be chain complexes. A \emph{homotopy retract} $(M, N, i, p, K)$ of $M$ onto $N$ is a diagram of the form
\begin{equation*}
  \xymatrix{ \ar@(ul,dl)@<-5.6ex>[]_K  & M
\ar@<0.75ex>[r]^-p & N, \ar@<0.75ex>[l]^-i }
\end{equation*}
where $p,i$ are chain maps such that $pi=\mathrm{id}_{N}$ and $\mathrm{id}_{M}-ip=Kd+dK$ with $K^{2}=Ki=pK=0$.
\end{definition}

Recall that over a field, any quasi-isomorphism $i:N\to M$ extends to a homotopy retract. Now, we will carry out this process as follows for the quasi-isomorphism $i:(\mathcal{H},0)\hookrightarrow (V,d)$. Note that
\begin{equation*}
  V=dV\oplus d^{\ast}V\oplus \mathcal{H}.
\end{equation*}
Let $K:V\to V$ be a linear map given by
\begin{equation*}
  K(\mathcal{H})=K(d^{\ast}V)=0,\quad K(dd^{\ast}x)=d^{\ast}x
\end{equation*}
for any $d^{\ast}x\in d^{\ast}V$. Define the projection $p:V\to \mathcal{H}$ as follows.
\begin{equation*}
  p(dV)=p(d^{\ast}V)=0,\quad p(x)=x
\end{equation*}
for any $x\in \mathcal{H}$. Thus we have a homotopy retract  $(V, \mathcal{H}, i, p, K)$ of $V$ onto $\mathcal{H}$ of the form
\begin{equation*}
  \xymatrix{ \ar@(ul,dl)@<-5.6ex>[]_K  & V
\ar@<0.75ex>[r]^-p & \mathcal{H}. \ar@<0.75ex>[l]^-i }
\end{equation*}
It can be directly verified that $pi=\mathrm{id}_{\mathcal{H}}$ and $K^{2}=Ki=pK=0$.  On the other hand, for any element $x+dy+d^{\ast}z\in V=\mathcal{H}\oplus dV\oplus d^{\ast}V$, we have
\begin{equation*}
  x+dy+d^{\ast}z-ip(x+dy+d^{\ast}z)=dy+d^{\ast}z
\end{equation*}
and
\begin{equation*}
  Kd(x+dy+d^{\ast}z)+dK(x+dy+d^{\ast}z)=Kdd^{\ast}z+dKdy=d^{\ast}z+dy.
\end{equation*}
It follows that $\mathrm{id}-ip=Kd+dK$. Moreover, we have
\begin{proposition}\label{proposition:decomposition}
There is a decomposition on $V$ given by $\mathrm{id}=Kd+dK+h$, which gives a decomposition of $V$ as inner product spaces
\begin{equation*}
  V=dK(V)\oplus Kd(V)\oplus h(V),
\end{equation*}
where $h(V)=\mathcal{H}=\ker d\cap \ker d^{\ast}$. In addition, the decomposition coincides with the Hodge decomposition of $V$.
\end{proposition}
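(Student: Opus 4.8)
The plan is to read the three summands directly off the action of the operators $dK$, $Kd$, and $h:=ip$ on the Hodge decomposition $V=dV\oplus d^{\ast}V\oplus \mathcal{H}$ furnished by Corollary \ref{corollary:decomposition}. Since the homotopy retract $(V,\mathcal{H},i,p,K)$ has already been shown to satisfy $\mathrm{id}-ip=Kd+dK$, putting $h=ip$ gives the identity $\mathrm{id}=Kd+dK+h$ immediately. So the only remaining work is to identify the images of the three terms and to check that they recover the Hodge summands and inherit the orthogonal direct-sum structure.

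Next I would evaluate each operator on a general element $v=x+dy+d^{\ast}z$ with $x\in \mathcal{H}$, $dy\in dV$, $d^{\ast}z\in d^{\ast}V$, using the defining relations $K(\mathcal{H})=K(d^{\ast}V)=0$ and $K(dd^{\ast}w)=d^{\ast}w$ together with $dx=0$ and $dV=dd^{\ast}V$. The computation recorded just before the statement gives $Kd(v)=Kdd^{\ast}z=d^{\ast}z$ and $dK(v)=dKdy=dy$, while $p$ annihilates $dV$ and $d^{\ast}V$ and fixes $\mathcal{H}$, so $h(v)=ip(v)=x$. Hence $dK$, $Kd$, and $h$ are exactly the projections of $V$ onto $dV$, $d^{\ast}V$, and $\mathcal{H}$ respectively, yielding $dK(V)=dV$, $Kd(V)=d^{\ast}V$, and $h(V)=\mathcal{H}$.

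Finally, because $dV$, $d^{\ast}V$, and $\mathcal{H}$ are mutually orthogonal inner product subspaces whose direct sum is all of $V$ (Corollary \ref{corollary:decomposition}), the equalities of images show that $V=dK(V)\oplus Kd(V)\oplus h(V)$ holds as a decomposition of inner product spaces, and that it agrees termwise with the Hodge decomposition; in particular $h(V)=\mathcal{H}=\ker d\cap \ker d^{\ast}$. There is no substantive obstacle here: the content is entirely the bookkeeping of these operator evaluations. The only point demanding a moment's care is that $K$ is unambiguously defined on all of $dV$, which rests on $dV=dd^{\ast}V$ together with the injectivity of $d$ on $d^{\ast}V$ (a consequence of $\im d^{\ast}\perp \ker d$); this is precisely what makes the rule $K(dd^{\ast}w)=d^{\ast}w$ well-posed and guarantees that $dK$ and $Kd$ are genuine complementary projections rather than mere partial inverses.
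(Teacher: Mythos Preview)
Your proposal is correct and takes essentially the same approach as the paper: both use the homotopy-retract identity $\mathrm{id}=Kd+dK+h$ and then identify the images $dK(V)$, $Kd(V)$, $h(V)$ with the Hodge summands $dV$, $d^{\ast}V$, $\mathcal{H}$. The only cosmetic difference is that the paper first verifies $dK$, $Kd$, $h$ are idempotents and then checks the image inclusions, whereas you compute the three operators directly on a generic element $v=x+dy+d^{\ast}z$ to recognize them as the Hodge projections; the content is the same.
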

\begin{proof}
The quasi-isomorphism $i:(\mathcal{H},0)\hookrightarrow (V,d)$ extends to a homotopy retract $(V, \mathcal{H}, i, p, K)$ of $V$ onto $\mathcal{H}$. Let $h=ip$. We have
\begin{equation*}
  (dK)^{2}=dK,\quad (Kd)^{2}=Kd,\quad h^{2}=(ip)^{2}=ip=h.
\end{equation*}
By the direct sum decomposition theorem of linear transformations, one has
\begin{equation*}
  V=d K(V)\oplus K d (V)\oplus h(V).
\end{equation*}
By construction, we obtain $h(V)=\mathcal{H}$. It is obvious that $dK(V)\subseteq dV$. On the other hand, for any $dx\in dV$, we can write $x=dKx+Kdx+h(x)$ since $\mathrm{id}=Kd+dK+h$. Thus one has
\begin{equation*}
  dx=dKdx\in dK(V).
\end{equation*}
It follows that $dV\subseteq dKV$, and then we obtain $dV=dK(V)$. A similar process shows $d^{\ast}V=Kd(V)$. Thus the above decomposition coincides with the Hodge decomposition of $V$.
\end{proof}

Let $\mathbf{Inn}$ be the category of inner product spaces. For each differential graded inner product space, we have an inner product space $h(V)$ by the above construction. For any $f:V\to W$ of differential graded inner product spaces, we have a morphism $h(f):h(V)\to h(W),x\mapsto hf(x)$ of inner product spaces.
\begin{lemma}
The construction $h:\mathbf{DGI}\to \mathbf{Inn},V\mapsto h(V)$ is a functor from the category of differential graded inner product spaces to the category of inner product spaces.
\end{lemma}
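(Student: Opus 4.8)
The plan is to check the two functor axioms — preservation of identities and of composition — directly from the explicit description of the Hodge projection $h=ip$ constructed above. To keep the operator and the functor distinct, I will write $h_V=i_Vp_V\colon V\to V$ for the projection attached to the object $V$, so that the harmonic space is $h(V)=\im i_V=\mathcal{H}(V)$ and the functor sends a morphism $f\colon V\to W$ to $h(f)=h_W\circ f|_{h(V)}\colon h(V)\to h(W)$, $x\mapsto h_Wf(x)$. First I would record that this is a legitimate morphism in $\mathbf{Inn}$: it is linear, and its image lies in $\im i_W=\mathcal{H}(W)=h(W)$, so it is a well-defined linear map of inner product spaces.

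For identities, note that $p_V$ restricts to the identity on $\mathcal{H}(V)$ — by construction $p(dV)=p(d^{\ast}V)=0$ and $p(x)=x$ for $x\in\mathcal{H}$ — and $p_Vi_V=\mathrm{id}$, so for $x\in h(V)=\mathcal{H}(V)$ one has $h_V(x)=i_Vp_V(x)=x$. Hence $h(\mathrm{id}_V)=\mathrm{id}_{h(V)}$, which is immediate.

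The substantive step is composition: given $f\colon U\to V$ and $g\colon V\to W$, I must show $h_Wgh_Vf=h_Wgf$ on $h(U)$, i.e. $h(g)h(f)=h(gf)$. Fix $x\in h(U)=\mathcal{H}(U)$. Since $x$ is a cycle and $f$ is a chain map, $f(x)$ is a cycle in $V$. By Corollary \ref{corollary:decomposition}, $V=\im d\oplus\im d^{\ast}\oplus\mathcal{H}(V)$, and because $\im d^{\ast}\perp\ker d$ the cycle $f(x)$ has no $\im d^{\ast}$ component; thus $f(x)=\eta+dv$ with $\eta\in\mathcal{H}(V)$ and $dv\in\im d$. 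As $h_V$ is the identity on $\mathcal{H}(V)$ and annihilates $\im d$, we get $h_Vf(x)=\eta$, so $f(x)-h_Vf(x)=dv\in\im d$. Applying the chain map $g$ gives $g(f(x)-h_Vf(x))=g(dv)=d\,g(v)\in\im d\subseteq W$, and since $h_W$ annihilates $\im d$ we obtain $h_Wg\bigl(f(x)-h_Vf(x)\bigr)=0$, that is $h_Wgf(x)=h_Wgh_Vf(x)$. This establishes $h(g)h(f)=h(gf)$.

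I expect the main obstacle to be conceptual rather than computational. The earlier remark shows that $g$ does \emph{not} commute with the Hodge projections in general — the discrepancy $p_Wg-gp_V$ can land in the $\im d^{\ast}$ direction — so pushing $h_V$ through $g$ fails for arbitrary inputs. What rescues functoriality is that the functor only ever evaluates $h(f)$ on harmonic, hence \emph{closed}, elements, and a closed element's deviation from its harmonic part lies purely in $\im d$, never in $\im d^{\ast}$. Chain maps preserve $\im d$ and $h_W$ kills it, so the troublesome $d^{\ast}$-component never enters the computation. This is precisely why one must work with the harmonic subspace rather than with the full Hodge projection regarded as an operator on $V$.
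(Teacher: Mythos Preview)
Your proof is correct and follows essentially the same approach as the paper: both arguments hinge on the observation that for $x$ harmonic, $f(x)$ is a cycle and hence lies in $\im d\oplus\mathcal{H}$ (the paper writes this as $fh(V)\subseteq dK(W)\oplus h(W)$), so that the difference $f(x)-h_Vf(x)$ lies in $\im d$, which is preserved by the chain map $g$ and annihilated by $h_W$. Your presentation is slightly more streamlined in that you compute the difference $h_Wgf(x)-h_Wgh_Vf(x)$ directly rather than evaluating both sides separately, and your closing paragraph articulates clearly why the failure of $h$ to commute with morphisms in general does not obstruct functoriality here.
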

\begin{proof}
Let $f:V\to W$ be a morphism of differential graded inner product spaces.
It is obvious that $h(\mathrm{id}|_{V})=\mathrm{id}|_{h(V)}$. For any $x\in V$, we have $dfh(x)=fdh(x)=0$. By Proposition \ref{proposition:decomposition}, one has that $fh(V)\subseteq dK(W)\oplus h(W)$. Let $g:W\to U$ be a morphism of differential graded inner product spaces. We will prove $h(g)h(f)=h(gf)$. For any $x\in h(V)$, since $f(x)\subseteq dK(W)\oplus h(W)$, we can write $f(x)=dKa+h(b)$ for some $a,b\in W$. It follows that
\begin{equation*}
  h(gf)(x)=hgf(x)=hg(dKa+h(b))=hdgKa+hgh(b)=hgh(b)
\end{equation*}
and
\begin{equation*}
  h(g)h(f)(x)=hghf(x)=hgh(dKa+h(b))=hgh^{2}(b)=hgh(b).
\end{equation*}
Thus one has $h(gf)=h(g)h(f)$, which gives the desired result.
\end{proof}
\begin{remark}
Let $f:V\to W$ be a morphism of differential graded inner product spaces.
The proof of the above lemma shows that $fh(V)\subseteq dK(W)\oplus h(W)$. Moreover, it can be verified that $f(d^{\ast}V)\subseteq d^{\ast}W\oplus h(W)$ and $fdV\subseteq dW$. For a more intuitive representation, we list these relations as follows.
\begin{equation*}
  \xymatrix{
  d^{\ast}V\ar@{->}[d]\ar@{->}[rd]&h(V)\ar@{->}[d]\ar@{->}[dr]&dV\ar@{->}[d]\\
  d^{\ast}W&h(W)&dW
  }
\end{equation*}
\end{remark}

By Corollary \ref{corollary:decomposition}, the homology inherits the inner product from the harmonic space, which is exactly the quotient inner product.
Thus the homology also gives a functor $H:\mathbf{DGI}\to \mathbf{Inn}$ from the category of differential graded inner product spaces to the category of inner product spaces.
\begin{proposition}\label{proposition:isomorphism1}
There is a natural isomorphism $\rho:h\Rightarrow H$ of functors from $\mathbf{DGI}$ to $\mathbf{Inn}$.
\end{proposition}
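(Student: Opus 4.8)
The plan is to build $\rho$ from the isomorphism already produced in Corollary~\ref{corollary:decomposition}. For each object $V$ of $\mathbf{DGI}$, the inclusion $i_{V}\colon(\mathcal{H}_{V},0)\hookrightarrow(V,d)$ of its harmonic space is a quasi-isomorphism, and Corollary~\ref{corollary:decomposition} exhibits the induced map $H(i_{V})\colon h(V)=\mathcal{H}_{V}\to H(V,d)$, sending a harmonic element $x$ to its class $[x]$, as an isomorphism of vector spaces. I would set $\rho_{V}:=H(i_{V})$. Since the inner product on $H(V,d)$ is, by construction, the one transported from $\mathcal{H}_{V}$ (the quotient inner product inherited from the harmonic space), the map $\rho_{V}$ is automatically an isometry, hence an isomorphism in $\mathbf{Inn}$. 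So the only substantive point is naturality.

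To check naturality, I fix a morphism $f\colon V\to W$ and verify that the square
\begin{equation*}
\xymatrix{
h(V)\ar@{->}[r]^{h(f)}\ar@{->}[d]_{\rho_{V}}&h(W)\ar@{->}[d]^{\rho_{W}}\\
H(V)\ar@{->}[r]_{H(f)}&H(W)
}
\end{equation*}
commutes. Evaluating on $x\in h(V)=\mathcal{H}_{V}$, the left-then-bottom composite gives $H(f)(\rho_{V}(x))=[f(x)]$, whereas the top-then-right composite gives $\rho_{W}(h(f)(x))=[hf(x)]$, where $h=ip$ is the harmonic projection on $W$. Thus it suffices to show that $f(x)$ and $hf(x)$ represent the same class in $H(W,d)$, that is, that their difference lies in $\im d$.

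The key computation runs as follows. Because $x$ is harmonic it is a cycle, and since $f$ is a chain map we get $df(x)=f(dx)=0$, so $f(x)\in\ker d$; likewise $hf(x)\in\mathcal{H}_{W}\subseteq\ker d$, so both are genuine representatives. Applying the identity $\mathrm{id}=Kd+dK+h$ on $W$ from Proposition~\ref{proposition:decomposition} to $f(x)$ and using $df(x)=0$ yields
\begin{equation*}
f(x)-hf(x)=Kdf(x)+dKf(x)=dKf(x)\in\im d.
\end{equation*}
Hence $[f(x)]=[hf(x)]$, the square commutes, and $\rho$ is a natural transformation; being a componentwise isomorphism, it is a natural isomorphism.

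I expect the main obstacle to be exactly this last step, namely pinning down that $f(x)$ and its harmonic projection $hf(x)$ differ by a boundary. It is tempting but false that $f$ should commute with the harmonic projection: the remark preceding the statement exhibits an $f$ with $p_{W}f\neq fp_{V}$, so naturality cannot be read off from a strict compatibility of the two Hodge decompositions. Instead it must be extracted at the level of homology, where the boundary term $dKf(x)$ becomes invisible, and the homotopy $K$ from the homotopy retract is precisely what supplies this boundary. This is why the homotopy-retract formulation in Proposition~\ref{proposition:decomposition}, rather than the bare space-level decomposition of Corollary~\ref{corollary:decomposition}, is the right tool for the naturality argument.
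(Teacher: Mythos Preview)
Your argument is correct and follows essentially the same approach as the paper: the map $\rho_{V}(x)=[x]$ is the one used there, and naturality is established by showing $f(x)-hf(x)$ is a boundary. The only cosmetic difference is that the paper writes $f(z)=dKa+h(b)$ (invoking the containment $fh(V)\subseteq dK(W)\oplus h(W)$ proved in the preceding lemma) and then compares $[h(b)]$ on both sides, whereas you apply the homotopy identity $\mathrm{id}=Kd+dK+h$ directly to $f(x)$; these are the same computation.
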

\begin{proof}
Let $f:V\to W$ be a morphism of differential graded inner product spaces. For each $V\in \mathrm{ob}(\mathbf{DGI})$, define $\rho_{V}:h(V)\Rightarrow H(V)$ by $\rho_{V}(z)=[z]$. Here, $[z]$ is the homology class of $z$ in $V$. To prove $\rho$ is natural, it suffices to show $\rho_{W}\circ h(f)=H(f)\circ\rho_{V}$.
\begin{equation*}
  \xymatrix{
  h(V)\ar@{->}[r]^{\rho_{V}}\ar@{->}[d]_{h(f)}&H(V)\ar@{->}[d]^{H(f)}\\
  h(W)\ar@{->}[r]^{\rho_{W}}&H(W)
  }
\end{equation*}
For each $z\in h(V)$, let $f(z)=dKa+h(b)$ for some $a,b\in W$. Then one has
\begin{equation*}
  \rho_{W}\circ h(f)(z)=[hf(z)]=[h(b)].
\end{equation*}
On the other hand, we have
\begin{equation*}
  H(f)\circ\rho_{V}(z)=H(f)[z]=[f(z)]=[h(b)].
\end{equation*}
It follows that $\rho_{W}\circ h(f)(z)= H(f)\circ\rho_{V}(z)$. If $\rho_{V}(z)=[z]=0$, we have $z\in dV\cap h(V)=0$. So $\rho_{V}$ is injective. For any element $\alpha\in H(V)$, choose a representative $x$ of $\alpha$, that is, $\alpha=[x]$. Then we have $\rho_{V}[h(x)]=[x]$. So $\rho_{V}$ is surjective. Hence, $\rho$ is a natural isomorphism.
\end{proof}

Let $\mathcal{S}:(\mathbb{R},\leq)\to \mathbf{DGI}$ be a persistence differential graded inner product space. Then we have a persistence inner product space $h\mathcal{S}:(\mathbb{R},\leq)\to \mathbf{Inn}$. For real numbers $a\leq b$, we define the \emph{$(a,b)$-persistent harmonic space} of $\mathcal{S}$ as
\begin{equation*}
  \mathcal{H}^{a,b}(\mathcal{S})=hf_{a\to b}h(\mathcal{S}_{a}).
\end{equation*}
Here, $f_{a\to b}:\mathcal{S}_{a}\to \mathcal{S}_{b}$ is the morphism induced by $a\to b$. Recall that the homology gives a functor $H:\mathbf{DGI}\to \mathbf{Inn}$ from the category of differential graded inner product spaces to the category of inner product spaces. Therefore, we have two functors
\begin{equation*}
  \xymatrix{  (\mathbb{R},\leq)\ar@{->}[r]^{\mathcal{S}}&\mathbf{DGI}\ar@<0.75ex>[r]^-h \ar@<-0.75ex>[r]_-H& \mathbf{Inn}  }.
\end{equation*}
By Proposition \ref{proposition:isomorphism1}, one has the following theorem.
\begin{theorem}\label{theorem:nature}
Let $\mathcal{S}:(\mathbb{R},\leq)\to \mathbf{DGI}$ be a persistence differential graded inner product space.
There is a natural isomorphism $\tilde{\rho}:h\mathcal{S}\Rightarrow H\mathcal{S}$ of functors from $(\mathbb{R},\leq)$ to $\mathbf{Inn}$.
\end{theorem}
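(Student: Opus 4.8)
The plan is to obtain $\tilde{\rho}$ by \emph{whiskering} the natural isomorphism $\rho:h\Rightarrow H$ of Proposition \ref{proposition:isomorphism1} with the functor $\mathcal{S}$. Since $\rho$ already relates the functors $h,H:\mathbf{DGI}\to\mathbf{Inn}$, and $\mathcal{S}:(\mathbb{R},\leq)\to\mathbf{DGI}$ is a functor, the horizontal composite $\rho\mathcal{S}$ ought to furnish exactly the desired natural isomorphism $h\mathcal{S}\Rightarrow H\mathcal{S}$. Thus there is essentially nothing new to establish beyond unwinding this formal construction and checking that it does what we want.

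Concretely, first I would define, for each object $a$ of $(\mathbb{R},\leq)$, the component $\tilde{\rho}_{a}:=\rho_{\mathcal{S}_{a}}:h(\mathcal{S}_{a})\to H(\mathcal{S}_{a})$. Each such component is an isomorphism of inner product spaces, because $\rho$ is a natural isomorphism and hence every $\rho_{V}$ is invertible. The composites $h\mathcal{S}$ and $H\mathcal{S}$ are genuinely functors $(\mathbb{R},\leq)\to\mathbf{Inn}$, as they are composites of the functors $\mathcal{S}$ with $h$ and $H$ respectively, so the only data left to verify is naturality of $\tilde{\rho}$.

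Second, for each morphism $a\to b$ in $(\mathbb{R},\leq)$ with induced morphism $f_{a\to b}:\mathcal{S}_{a}\to\mathcal{S}_{b}$ in $\mathbf{DGI}$, I would check that the square
\[
\xymatrix{
h(\mathcal{S}_{a})\ar@{->}[r]^{\tilde{\rho}_{a}}\ar@{->}[d]_{h(f_{a\to b})}&H(\mathcal{S}_{a})\ar@{->}[d]^{H(f_{a\to b})}\\
h(\mathcal{S}_{b})\ar@{->}[r]^{\tilde{\rho}_{b}}&H(\mathcal{S}_{b})
}
\]
commutes. But this square is literally the naturality square for $\rho$ evaluated at the $\mathbf{DGI}$-morphism $f_{a\to b}$, which commutes by Proposition \ref{proposition:isomorphism1}. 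Since every component $\tilde{\rho}_{a}$ is an isomorphism and every naturality square commutes, $\tilde{\rho}$ is a natural isomorphism.

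The argument is entirely formal, so I do not anticipate a genuine obstacle; the whole content is the observation that post-composing a natural transformation of functors out of $\mathbf{DGI}$ with the single functor $\mathcal{S}$ preserves both naturality and invertibility. The only point requiring care is to confirm that the naturality square one must verify is exactly the one supplied by the previous proposition, rather than a new condition, which it is.
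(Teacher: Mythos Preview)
Your proposal is correct and matches the paper's approach exactly: the paper simply states that the theorem follows from Proposition~\ref{proposition:isomorphism1}, which is precisely the whiskering argument you spell out. There is nothing to add.
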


The above theorem says that the data $\{H^{a,b}(\mathcal{S})\}_{a\leq b}$ of persistent homology can be identified with the data $\{\mathcal{H}^{a,b}(\mathcal{S})\}_{a\leq b}$ of persistent harmonic spaces.

\begin{lemma}\label{lemma:section}
Let $f:V\to W$ be a morphism of differential graded inner product spaces. Then $f$ is injective. Moreover, we have $f^{\ast}f=\mathrm{id}_{V}$.
\end{lemma}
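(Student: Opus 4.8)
The plan is to read off both statements directly from the defining property of a morphism of differential graded inner product spaces, namely that $f$ is an isometry: $\langle x,y\rangle_{V}=\langle f(x),f(y)\rangle_{W}$ for all $x,y\in V$. The only ingredients needed are this identity, the positive definiteness of the inner product, and the defining relation of the adjoint $f^{\ast}:W\to V$, which satisfies $\langle f(x),w\rangle_{W}=\langle x,f^{\ast}(w)\rangle_{V}$ for all $x\in V$ and $w\in W$ (the map-level analogue of the adjoint recalled earlier for operators on a single space).

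First I would establish injectivity. Suppose $f(x)=0$. Applying the isometry identity with $y=x$ gives $\langle x,x\rangle_{V}=\langle f(x),f(x)\rangle_{W}=0$, and positive definiteness forces $x=0$. Hence $\ker f=0$ and $f$ is injective.

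Next I would prove $f^{\ast}f=\mathrm{id}_{V}$. For arbitrary $x,y\in V$, combining the adjoint relation with the isometry identity yields
\begin{equation*}
\langle f^{\ast}f(x),y\rangle_{V}=\langle f(x),f(y)\rangle_{W}=\langle x,y\rangle_{V}.
\end{equation*}
Thus $\langle f^{\ast}f(x)-x,y\rangle_{V}=0$ for every $y\in V$, and nondegeneracy of the inner product gives $f^{\ast}f(x)=x$. Since $x$ was arbitrary, we conclude $f^{\ast}f=\mathrm{id}_{V}$.

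There is no substantial obstacle here, and the computation is short; the only point deserving care is that $f^{\ast}$ is the adjoint of a map between the two distinct spaces $W$ and $V$, rather than the adjoint of an operator on a single space as in the preliminary discussion, so I would make explicit at the outset that $f^{\ast}$ denotes this map-level adjoint and that its defining relation is precisely what drives the second identity. I would also remark that $f^{\ast}f=\mathrm{id}_{V}$ re-proves injectivity, so the two assertions are tightly linked and one could alternatively deduce the first from the second.
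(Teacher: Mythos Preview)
Your proof is correct and follows essentially the same approach as the paper's: both use the isometry identity with $y=x$ and positive definiteness to get injectivity, and then compute $\langle f^{\ast}f(x)-x,y\rangle_{V}=0$ for all $y$ to obtain $f^{\ast}f=\mathrm{id}_{V}$. Your added remarks about the map-level adjoint and the implication from the second assertion to the first are helpful clarifications but do not change the underlying argument.
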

\begin{proof}
If $f(x)=0$, we have $\langle x,x\rangle_{V}=\langle f(x),f(x)\rangle_{W}=0$. By the positive definiteness of the inner product, one has $x=0$. Thus $f$ is injective. For any $v\in V$, we have
\begin{equation*}
  \langle f^{\ast}f(x)-x,v\rangle_{V}=\langle f(x),f(v)\rangle_{W}-\langle x,v\rangle_{V}=0.
\end{equation*}
It follows that $f^{\ast}f(x)-x=0$, which gives the desired result.
\end{proof}

Let $f:V\to W$ be a morphism of differential graded inner product spaces. Let $\Theta_{f}=\{x\in W|dx\in f(V)\}$. Then we have a short sequence
\begin{equation*}
\xymatrix@=1cm{& \Theta_{f}\ar@{->}[r]^{f^{\ast}d\iota}&V\ar@{->}[r]^{d}&V\ar@{->}[r]&0 }.
\end{equation*}
Here, $\iota:\Theta_{f}\hookrightarrow W$ is an inclusion. By abusing notations, we will always use the notation $\iota$ to refer to such inclusion for different $f$.
For any $x\in \Theta_{f}$, we have $dx=f(v)$ for some $v\in V$. It follows that
\begin{equation*}
  fdf^{\ast}d\iota (x)=fdf^{\ast}f(v)=fdv=df(v)=d^{2}x=0.
\end{equation*}
Since $f$ is injective, one has $df^{\ast}d\iota (x)=0$. Hence, we have $df^{\ast}d\iota =0$. We define the \emph{Laplacian of morphism $f:V\to W$} by
\begin{equation*}
  \Delta_{f}=f^{\ast}d\iota(f^{\ast}d\iota)^{\ast}+d^{\ast}d=f^{\ast}d\iota\iota^{\ast} d^{\ast}f+d^{\ast}d.
\end{equation*}
\begin{remark}
Note that all the spaces considered are graded. We use the subscript $p$ to indicate the $p$-graded component of a space.
\begin{equation*}
  \xymatrix{
  \Theta_{f,p+1}\ar@{->}[d]_{\iota_{p+1}}\ar@{->}[rr]^{(f_{p})^{\ast}d_{p+1}\iota_{p+1}}&& V_{p}\ar@{->}[d]^{f_{p}}\ar@{->}[rr]^{d_{p}}&&V_{p-1}\\
  W_{p+1}\ar@{->}[rr]^{d_{p+1}}&&W_{p}&&
  }
\end{equation*}
The \emph{Laplacian of morphism $f:V\to W$ at dimension $p$} is given by
\begin{equation*}
  \Delta_{f,p}=(f_{p})^{\ast}d_{p+1}\iota_{p+1}(\iota_{p+1})^{\ast} (d_{p+1})^{\ast}f_{p}+(d_{p})^{\ast}d_{p}.
\end{equation*}
Note that $\Delta_{f,p}:V_{p}\to V_{p}$ is a self-adjoint and non-negative definite operator.
\end{remark}
For any real numbers $a\leq b$, the \emph{$(a,b)$-persistent Laplacian} for a persistence differential graded inner product space $\mathcal{S}:(\mathbb{R},\leq)\to \mathbf{DGI}$ is defined by
\begin{equation*}
  \Delta^{a,b}_{\mathcal{S}}=f_{a\to b}^{\ast}d\iota\iota^{\ast} d^{\ast}f_{a\to b}+d^{\ast}d.
\end{equation*}
In particular, if $j_{a,b}:\mathcal{S}_{a}\hookrightarrow \mathcal{S}_{b}$ is an inclusion for any $a\leq b$, then $\mathcal{S}$ is a persistence differential graded inner product space. Thus the $(a,b)$-persistent Laplacian is $\Delta^{a,b}_{\mathcal{S}}=j_{a,b}^{\ast}d\iota\iota^{\ast}d^{\ast}j_{a,b}+d^{\ast}d$.

\begin{proposition}\label{proposition:isomorphism}
Let $f:V\to W$ be a morphism of differential graded inner product spaces.
Then we have $\ker \Delta_{f}\cong hfh(V)$.
\end{proposition}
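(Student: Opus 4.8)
The plan is to recognize $\Delta_{f}$ as an instance of the algebraic Hodge Laplacian of Proposition \ref{proposition:algebraic} and then to write down an explicit linear bijection $\ker\Delta_{f}\to hfh(V)$. First I would apply Proposition \ref{proposition:algebraic} to the pair $\Theta_{f}\stackrel{\phi}{\to}V\stackrel{\psi}{\to}V$ with $\phi=f^{\ast}d\iota$ and $\psi=d$; the identity $df^{\ast}d\iota=0$ already verified above is exactly the hypothesis $\psi\phi=0$. This yields the orthogonal decomposition $V=\im f^{\ast}d\iota\iota^{\ast}d^{\ast}f\oplus\im d^{\ast}d\oplus\ker\Delta_{f}$ together with the description $\ker\Delta_{f}=\ker d\cap\ker\iota^{\ast}d^{\ast}f$. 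Two consequences of this decomposition are what drive the argument: $\ker\Delta_{f}$ is orthogonal to $\im f^{\ast}d\iota\iota^{\ast}d^{\ast}f$, and the latter coincides with $\im f^{\ast}d\iota$ (the image of $\phi\phi^{\ast}$ equals $(\ker\phi^{\ast})^{\perp}=\im\phi$).

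Next I would define $\Phi\colon\ker\Delta_{f}\to hfh(V)$ by $\Phi(z)=hf(z)$ and check it is well defined. Since $z\in\ker\Delta_{f}\subseteq\ker d$, we have $h(z)\in\mathcal{H}_{V}=h(V)$ and $z-h(z)\in\im d$, so $z-h(z)=d\xi$ for some $\xi$; because $f$ is a chain map and $h$ annihilates $\im d$ (recall $h=ip$ kills $dW$ by Proposition \ref{proposition:decomposition}), one gets $hf(z)=hf(h(z))\in hf(h(V))=hfh(V)$. For injectivity, suppose $hf(z)=0$. As $fz\in\ker d_{W}=\im d_{W}\oplus\mathcal{H}_{W}$, the harmonic component $hf(z)$ vanishes exactly when $fz\in\im d_{W}$, say $fz=dw$; then $dw=fz\in f(V)$ shows $w\in\Theta_{f}$, and Lemma \ref{lemma:section} gives $f^{\ast}d\iota(w)=f^{\ast}dw=f^{\ast}fz=z$. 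Hence $z\in\im f^{\ast}d\iota$, which is orthogonal to $\ker\Delta_{f}\ni z$; therefore $z=0$.

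For surjectivity, every element of $hfh(V)$ has the form $hf(\eta)$ with $\eta\in\mathcal{H}_{V}$. Decomposing $\eta$ along $V=\im f^{\ast}d\iota\oplus\im d^{\ast}d\oplus\ker\Delta_{f}$ and noting that $\im d^{\ast}d\subseteq\im d^{\ast}$ meets $\ker d$ trivially, I can write $\eta=\eta_{1}+z$ with $\eta_{1}\in\im f^{\ast}d\iota$ and $z\in\ker\Delta_{f}$. Writing $\eta_{1}=f^{\ast}d\iota(y)$ with $dy=f(v)$ and using $f^{\ast}f=\mathrm{id}$, one finds $\eta_{1}=v$ and $f\eta_{1}=fv=dy\in\im d_{W}$, so $hf(\eta_{1})=0$ and $hf(\eta)=hf(z)=\Phi(z)$. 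Thus $\Phi$ is a linear bijection and $\ker\Delta_{f}\cong hfh(V)$.

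The main obstacle is the injectivity step, where the purely spectral condition $\Phi(z)=hf(z)=0$ must be converted into the geometric statement $z\in\im f^{\ast}d\iota$; the key device is the section identity $f^{\ast}f=\mathrm{id}_{V}$ from Lemma \ref{lemma:section}, which lets the preimage $w$ of the exact form $fz=dw$ be pushed back to $z$ itself, after which orthogonality of the Hodge summands closes the argument. I would also take care with the bookkeeping that $\im f^{\ast}d\iota\iota^{\ast}d^{\ast}f=\im f^{\ast}d\iota$ and that $\im d^{\ast}d\cap\ker d=0$, both of which are needed for the decompositions to refine correctly. (I expect $\Phi$ to be a linear isomorphism but not in general an isometry, so the symbol $\cong$ should be read as isomorphism of inner product spaces rather than as an isometric identification.)
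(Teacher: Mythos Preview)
Your proof is correct and follows essentially the same strategy as the paper: apply Proposition~\ref{proposition:algebraic} to the pair $(\phi,\psi)=(f^{\ast}d\iota,d)$, define the map $hf|_{\ker\Delta_{f}}\colon\ker\Delta_{f}\to hfh(V)$, and verify bijectivity. The only notable variation is in the injectivity step: the paper argues directly from $\langle f(v),dz\rangle=0$ for all $z\in\Theta_{f}$ (taking $z$ with $dz=f(v)$ to force $f(v)=0$), whereas you pull back via $f^{\ast}f=\mathrm{id}$ to place $z$ in $\im f^{\ast}d\iota$ and then invoke orthogonality of the Hodge summands---these are two sides of the same coin, and your observation that $\im f^{\ast}d\iota\iota^{\ast}d^{\ast}f=\im f^{\ast}d\iota$ makes the orthogonality route clean.
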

\begin{proof}
By Proposition \ref{proposition:algebraic}, we know $V=\im f^{\ast}d\iota\iota^{\ast}d^{\ast}f\oplus \im d^{\ast}d\oplus\ker \Delta_{f}$, where $\ker\Delta_{f}=\ker d \cap\ker \iota^{\ast}d^{\ast}f$.
If $v\in \ker\Delta_{f}$, one has $\iota^{\ast}d^{\ast}f(v)=0$. It follows that
\begin{equation*}
  0=\langle \iota^{\ast}d^{\ast}f(v),z\rangle=\langle d^{\ast}f(v),\iota(z)\rangle=\langle f(v),dz\rangle
\end{equation*}
for any $z\in \Theta_{f}$. Since $v\in\ker d$, we have $v=a+db$ for $a\in h(V)$ and $b\in V$. Then
\begin{equation*}
  0=\langle f(v),df(b)\rangle=\langle f(a)+f(db),df(b)\rangle=\langle f(a),f(db)\rangle+\langle f(db),f(db)\rangle.
\end{equation*}
Note that $\langle f(a),f(db)\rangle=\langle a,db\rangle=0$. By the positive definiteness of inner product, we obtain $f(db)=0$, which implies $db=0$. Hence, we have $\ker\Delta_{f}\subseteq h(V)$.
%
%
The map $hf:h(V)\to h(W)$ of inner product spaces restricts a map of inner product spaces
\begin{equation*}
  hf|_{\ker \Delta_{f}}:\ker \Delta_{f}\longrightarrow hfh(V),\quad x\mapsto hf(x).
\end{equation*}
If $hf|_{\ker \Delta_{f}}(v)=0$ for some $v\in\ker \Delta_{f}$, we obtain $f(v)=dx$ for some $x\in W$. It follows that $x\in \Theta_{f}$. But $\langle f(v),dz\rangle=0$ for any $z\in \Theta_{f}$. One has that $\langle f(v),f(v)\rangle=0$, which implies that $v=0$. So the map $hf|_{\ker \Delta_{f}}$ is injective.
For each $y\in hfh(V)$, there exists an element $x\in h(V)$ such that $y=hf(x)$. By Proposition \ref{proposition:algebraic}, the element $x$ can be written as
\begin{equation*}
  x=x_{1}+f^{\ast}d\iota\iota^{\ast} d^{\ast}f(b),\quad x_{1}\in \ker \Delta_{f},b\in V.
\end{equation*}
Recall that $\iota^{\ast}d^{\ast}f(b)\in \Theta_{f}$. So one has $d\iota\iota^{\ast}d^{\ast}f(b)=f(u)$ for some $u\in V$. It follows that
\begin{equation*}
  hff^{\ast}d\iota\iota^{\ast}d^{\ast}f(b)=hff^{\ast}f(u)=hf(u)=hd\iota\iota^{\ast}d^{\ast}f(b)=0.
\end{equation*}
Hence, we obtain that $hf(x_{1})=y$. Thus $hf|_{\ker \Delta_{f}}$ is surjective. So the map $hf|_{\ker \Delta_{f}}:\ker \Delta_{f}\longrightarrow hfh(V)$ is an isomorphism of inner product spaces. This completes the proof.
\end{proof}

The following result is a direct application of Proposition \ref{proposition:isomorphism}, which says that the persistent harmonic space can be identified with the kernel of the persistent Laplacian.
\begin{corollary}\label{corollary:isomorphism}
Let $\mathcal{S}:(\mathbb{R},\leq)\to \mathbf{DGI}$ be a persistence differential graded inner product space.
We have $\ker \Delta^{a,b}_{\mathcal{S}}\cong \mathcal{H}^{a,b}(\mathcal{S})$ for any $a\leq b$.
\end{corollary}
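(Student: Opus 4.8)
The plan is to recognize this statement as an immediate specialization of Proposition \ref{proposition:isomorphism} to the structure morphisms of the persistence object $\mathcal{S}$. First I would fix real numbers $a\leq b$. Since $\mathcal{S}:(\mathbb{R},\leq)\to \mathbf{DGI}$ is a functor, the relation $a\leq b$ determines a morphism $f_{a\to b}:\mathcal{S}_{a}\to \mathcal{S}_{b}$ of differential graded inner product spaces, and this is precisely the morphism appearing in the definitions of both $\Delta^{a,b}_{\mathcal{S}}$ and $\mathcal{H}^{a,b}(\mathcal{S})$. So the whole argument reduces to verifying that the general result proved earlier, stated for an arbitrary morphism $f:V\to W$, applies verbatim to $f_{a\to b}$.

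The key step is then a matter of matching definitions against the hypotheses of Proposition \ref{proposition:isomorphism}. Taking $V=\mathcal{S}_{a}$, $W=\mathcal{S}_{b}$, and $f=f_{a\to b}$ in that proposition, the Laplacian of the morphism $f_{a\to b}$ is
\begin{equation*}
  \Delta_{f_{a\to b}}=f_{a\to b}^{\ast}d\iota\iota^{\ast}d^{\ast}f_{a\to b}+d^{\ast}d,
\end{equation*}
which is exactly the $(a,b)$-persistent Laplacian $\Delta^{a,b}_{\mathcal{S}}$. Likewise, the target space $hfh(V)$ appearing in Proposition \ref{proposition:isomorphism} becomes $hf_{a\to b}h(\mathcal{S}_{a})$, which is by definition the $(a,b)$-persistent harmonic space $\mathcal{H}^{a,b}(\mathcal{S})$.

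With these identifications in place, applying Proposition \ref{proposition:isomorphism} to $f_{a\to b}$ yields directly
\begin{equation*}
  \ker \Delta^{a,b}_{\mathcal{S}}=\ker \Delta_{f_{a\to b}}\cong hf_{a\to b}h(\mathcal{S}_{a})=\mathcal{H}^{a,b}(\mathcal{S}),
\end{equation*}
as claimed. I do not expect any genuine obstacle here: the only point requiring care is confirming that the notational conventions for $\iota$, $d$, and the adjoints agree between the general statement and the persistence setting. Since $\Theta_{f_{a\to b}}=\{x\in\mathcal{S}_{b}\mid dx\in f_{a\to b}(\mathcal{S}_{a})\}$ and $\iota$ denotes the same inclusion used throughout, this agreement is automatic, and the corollary follows with no additional computation.
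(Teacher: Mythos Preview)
Your proposal is correct and matches the paper's own approach exactly: the paper simply states that the corollary is a direct application of Proposition \ref{proposition:isomorphism}, and your argument is precisely that specialization, with $V=\mathcal{S}_{a}$, $W=\mathcal{S}_{b}$, and $f=f_{a\to b}$.
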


\begin{theorem}\label{theorem:persistencedecomposition}
Let $\mathcal{S}:(\mathbb{R},\leq)\to \mathbf{DGI}$ be a persistence differential graded inner product space. For any $a\leq b$, we have a direct sum decomposition of inner product spaces
\begin{equation*}
  \mathcal{S}_{a}= \ker \Delta^{a,b}_{\mathcal{S}}\oplus \im f_{a\to b}^{\ast}d\iota\iota^{\ast}d^{\ast}f_{a\to b}\oplus \im d^{\ast}d,
\end{equation*}
where $\ker \Delta^{a,b}_{\mathcal{S}}\cong\mathcal{H}^{a,b}(\mathcal{S})$.
\end{theorem}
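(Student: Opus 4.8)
The plan is to obtain the decomposition as a direct application of the Algebraic Hodge decomposition (Proposition \ref{proposition:algebraic}) to a suitably chosen two-step sequence, and then to read off the harmonic summand from Corollary \ref{corollary:isomorphism}. First I would specialize to $V=\mathcal{S}_{a}$ and $f=f_{a\to b}:\mathcal{S}_{a}\to\mathcal{S}_{b}$, and consider the sequence
\begin{equation*}
\Theta_{f_{a\to b}}\stackrel{\phi}{\longrightarrow}\mathcal{S}_{a}\stackrel{\psi}{\longrightarrow}\mathcal{S}_{a},
\qquad \phi=f_{a\to b}^{\ast}d\iota,\quad \psi=d,
\end{equation*}
where $\iota:\Theta_{f_{a\to b}}\hookrightarrow\mathcal{S}_{b}$ is the inclusion. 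The point is that this is exactly the data for which $\Delta_{f_{a\to b}}=\phi\phi^{\ast}+\psi^{\ast}\psi$ equals the $(a,b)$-persistent Laplacian $\Delta^{a,b}_{\mathcal{S}}=f_{a\to b}^{\ast}d\iota\iota^{\ast}d^{\ast}f_{a\to b}+d^{\ast}d$, since $\phi^{\ast}=\iota^{\ast}d^{\ast}f_{a\to b}$ and $\psi^{\ast}\psi=d^{\ast}d$.

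Next I would verify the hypothesis $\psi\phi=0$ needed to invoke Proposition \ref{proposition:algebraic}. This is precisely the identity $df_{a\to b}^{\ast}d\iota=0$, which was already established in the excerpt: for $x\in\Theta_{f_{a\to b}}$ one has $dx=f_{a\to b}(v)$ for some $v$, and the computation $f_{a\to b}df_{a\to b}^{\ast}d\iota(x)=d^{2}x=0$ together with the injectivity of $f_{a\to b}$ (Lemma \ref{lemma:section}) forces $df_{a\to b}^{\ast}d\iota(x)=0$. With $\psi\phi=0$ in hand, Proposition \ref{proposition:algebraic} yields the orthogonal direct sum decomposition of inner product spaces
\begin{equation*}
\mathcal{S}_{a}=\im\phi\phi^{\ast}\oplus\im\psi^{\ast}\psi\oplus\mathcal{H},
\qquad \mathcal{H}=\ker\psi\cap\ker\phi^{\ast}=\ker\Delta^{a,b}_{\mathcal{S}}.
\end{equation*}
Substituting $\phi\phi^{\ast}=f_{a\to b}^{\ast}d\iota\iota^{\ast}d^{\ast}f_{a\to b}$ and $\psi^{\ast}\psi=d^{\ast}d$, and reordering the summands, gives exactly the claimed decomposition
\begin{equation*}
\mathcal{S}_{a}=\ker\Delta^{a,b}_{\mathcal{S}}\oplus\im f_{a\to b}^{\ast}d\iota\iota^{\ast}d^{\ast}f_{a\to b}\oplus\im d^{\ast}d.
\end{equation*}

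Finally, the stated identification $\ker\Delta^{a,b}_{\mathcal{S}}\cong\mathcal{H}^{a,b}(\mathcal{S})$ is immediate from Corollary \ref{corollary:isomorphism}, which in turn rests on Proposition \ref{proposition:isomorphism} applied to $f=f_{a\to b}$. In this argument there is essentially no genuine obstacle: every ingredient has been prepared beforehand, so the work is reduced to correctly matching the abstract maps $\phi,\psi$ of Proposition \ref{proposition:algebraic} with the concrete operators $f_{a\to b}^{\ast}d\iota$ and $d$. If anything, the only point demanding care is bookkeeping with the adjoints and the inclusion $\iota$ so that $\phi\phi^{\ast}$ simplifies to the operator appearing in $\Delta^{a,b}_{\mathcal{S}}$; once that identification is made transparent, the decomposition and the harmonic identification both follow at once.
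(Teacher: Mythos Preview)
Your proposal is correct and follows exactly the paper's approach: the paper's proof is the one-line statement ``It is a straightforward result of Proposition \ref{proposition:algebraic} and Corollary \ref{corollary:isomorphism},'' and you have simply unpacked this by specializing Proposition \ref{proposition:algebraic} to $\phi=f_{a\to b}^{\ast}d\iota$, $\psi=d$ and then invoking Corollary \ref{corollary:isomorphism} for the harmonic identification.
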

\begin{proof}
It is a straightforward result of Proposition \ref{proposition:algebraic} and Corollary \ref{corollary:isomorphism}.
\end{proof}

\section{The category of Laplacian trees}\label{section:laplaciancategory}
In this section, we will introduce the Laplacian tree as a tool for representing and classifying persistent Laplacians. The persistence Laplacian tree plays a role in persistent Laplacian theory similar to the role played by persistence modules in persistent homology theory.
We will establish a more concrete connection between persistent homology and persistent Laplacians in Theorem \ref{theorem:equivalences}.
Our focus is not on the stability of a specific Laplacian operator, but rather on the overall behavioral stability of a family of persistent Laplacian operators. This pursuit leads to the establishment of the framework of Laplacian trees.

\begin{lemma}\label{lemma:condition}
Let $g:W\to X$ be a morphism of differential graded inner product spaces. We have a direct sum decomposition $X=g(W)\oplus X_{1}$ of inner product spaces.
Then the following conditions are equivalent
\begin{enumerate}
  \item[$(i)$] For any morphism $f:V\to W$ of differential graded inner product spaces, we have $\Delta_{f}=\Delta_{gf}$;
  \item[$(ii)$] For each $x_{1}\in X_{1}$, we have $dx_{1}=0$ or $dx_{1}=g(w)+y_{1}$ for some $w\in W$ and nonzero cycle $y_{1}\in X$;
  \item[$(iii)$] 
$g(W)\cap dX_{1}=0$.
\end{enumerate}
\end{lemma}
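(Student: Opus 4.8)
The plan is to reduce everything to the \emph{up-Laplacian} term. Write $A_{f}=f^{\ast}d\iota\iota^{\ast}d^{\ast}f$, so that $\Delta_{f}=A_{f}+d^{\ast}d$. Since $d^{\ast}d$ depends only on the differential of $V$, it is common to $\Delta_{f}$ and $\Delta_{gf}$, and hence $(i)$ is equivalent to the assertion that $A_{f}=A_{gf}$ for every $f$. I will use three elementary observations throughout: because $g$ is a chain map one has $g(\Theta_{f})\subseteq\Theta_{gf}$; the decomposition $X=g(W)\oplus X_{1}$ is orthogonal, so $X_{1}=g(W)^{\perp}$ and $\iota\iota^{\ast}$ is the orthogonal projection $P_{\Theta_{f}}$ onto $\Theta_{f}$; and the ``defect'' $E(w):=d^{\ast}g(w)-g(d^{\ast}w)$ lies in $X_{1}$, which follows from $g^{\ast}d^{\ast}=d^{\ast}g^{\ast}$ together with $g^{\ast}g=\mathrm{id}$ (Lemma~\ref{lemma:section}). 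Finally, set $S:=\{u\in W\mid g(u)\in dX_{1}\}=g^{-1}(g(W)\cap dX_{1})$; since $g(u)=dx_{1}$ forces $g(du)=0$ and hence $du=0$, we have $S\subseteq\ker d$, and $(iii)$ is exactly the statement $S=0$.

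The equivalence $(ii)\Leftrightarrow(iii)$ is the easy part. Given $x_{1}\in X_{1}$, write the cycle $dx_{1}=g(w)+y_{1}$ in $g(W)\oplus X_{1}$. Applying $d$ and using $d^{2}=0$ gives $g(dw)=-dy_{1}\in g(W)\cap dX_{1}$, so under $(iii)$ both $g(w)$ and $y_{1}$ are cycles, and $y_{1}\neq0$ whenever $dx_{1}\neq0$ (otherwise $dx_{1}=g(w)\in g(W)\cap dX_{1}=0$); this is precisely $(ii)$. Conversely, if $(ii)$ holds and $\eta=dx_{1}\in g(W)\cap dX_{1}$, then uniqueness of the decomposition forces $y_{1}=0$, so the only option left by $(ii)$ is $dx_{1}=0$, i.e.\ $\eta=0$; thus $(iii)$ holds.

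For $(iii)\Rightarrow(i)$ I would first identify $\Theta_{gf}$. Writing $z=g(w)+x_{1}$, the requirement $dz\in gf(V)\subseteq g(W)$ forces the $X_{1}$-component of $dx_{1}$ to vanish, i.e.\ $dx_{1}\in g(W)\cap dX_{1}=0$ by $(iii)$; one then reads off the orthogonal sum $\Theta_{gf}=g(\Theta_{f})\oplus(X_{1}\cap\ker d)$. The computation of $A_{gf}(v)=(gf)^{\ast}d\,P_{\Theta_{gf}}\,d^{\ast}(gf)(v)$ now goes through cleanly: decomposing $d^{\ast}gf(v)=g(d^{\ast}fv)+E(fv)$ and projecting onto $\Theta_{gf}$, the $g(W)$-part contributes $g(P_{\Theta_{f}}d^{\ast}fv)$ while the defect $E(fv)\in X_{1}$ feeds only the summand $X_{1}\cap\ker d$. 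Applying $d$ annihilates this latter, \emph{cyclic}, contribution, and $d\,g(P_{\Theta_{f}}d^{\ast}fv)=g(d\,P_{\Theta_{f}}d^{\ast}fv)$; finally $(gf)^{\ast}=f^{\ast}g^{\ast}$ with $g^{\ast}g=\mathrm{id}$ returns exactly $A_{f}(v)$. Hence $A_{gf}=A_{f}$ and $(i)$ holds. The point to watch is that the extra directions of $\Theta_{gf}$ are cycles, hence invisible to the outer $d$.

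The converse $(i)\Rightarrow(iii)$, argued by contraposition, is the main obstacle, because an image-level comparison of $A_{f}$ and $A_{gf}$ does not suffice: when $S\subseteq dW$ with $S\neq0$ the two operators have the same image yet still differ, so the witness must detect a scalar discrepancy. Assuming $(iii)$ fails, I would choose $0\neq u\in S$ with $\|u\|=1$ and $x_{1}\in X_{1}$ with $dx_{1}=g(u)$, and take the one-dimensional witness $V=\langle v\rangle$ with $dv=0$, $\|v\|=1$, and the $\mathbf{DGI}$-morphism $f(v)=u$. On $V$ both operators are scalars, $A_{f}=\lambda_{f}\,\mathrm{id}$ and $A_{gf}=\lambda_{gf}\,\mathrm{id}$, with $\lambda_{f}=\|P_{\Theta_{f}}d^{\ast}u\|^{2}$ and $\lambda_{gf}=\|P_{\Theta_{gf}}d^{\ast}g(u)\|^{2}$. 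Using $g(\Theta_{f})\subseteq\Theta_{gf}$ one splits $\lambda_{gf}=\|P_{g(\Theta_{f})}d^{\ast}g(u)\|^{2}+\|Q\,d^{\ast}g(u)\|^{2}$, where $Q$ projects onto $\Theta_{gf}\ominus g(\Theta_{f})$; the first summand equals $\lambda_{f}$ (since $E(u)\perp g(W)$ and $g$ is an isometry), while the second is strictly positive because $x_{1}\in\Theta_{gf}\ominus g(\Theta_{f})$ and $\langle d^{\ast}g(u),x_{1}\rangle=\langle g(u),dx_{1}\rangle=\|u\|^{2}=1\neq0$. Thus $\lambda_{gf}>\lambda_{f}$, so $\Delta_{gf}\neq\Delta_{f}$, contradicting $(i)$ and establishing $(iii)$.
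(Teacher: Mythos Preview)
Your proof is correct. The equivalence $(ii)\Leftrightarrow(iii)$ and the implication $(iii)\Rightarrow(i)$ are close in spirit to the paper's arguments: the paper also decomposes $\Theta_{gf}$ (implicitly, as $g(W)\oplus Z(X_{1})$ when $f=\mathrm{id}$) and then verifies the identity $g\iota^{\ast}d^{\ast}f=\iota^{\ast}d^{\ast}gf$ by pairing against a generic element of $\Theta_{gf}$, which is the inner-product form of your projection computation with $P_{\Theta_{gf}}$ and the defect $E$.

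The real divergence is in $(i)\Rightarrow(iii)$. The paper proves $(i)\Rightarrow(ii)$ by specializing to $f=\mathrm{id}$, choosing orthonormal bases of $W$, $X_{1}$, and $\Theta_{g}$, writing out the representation matrices of $d$, $g$, $\iota$, and extracting the matrix identity $QD=0$; it then runs a somewhat delicate basis-selection argument to show that $dx_{1}\in g(W)$ forces $dx_{1}=0$. Your route is coordinate-free and sharper: a one-dimensional witness $V=\langle v\rangle$ with $f(v)=u$ chosen so that $g(u)=dx_{1}$, followed by the variational inequality $\lambda_{gf}=\lambda_{f}+\|Q\,d^{\ast}g(u)\|^{2}>\lambda_{f}$, with strict positivity certified by $\langle d^{\ast}g(u),x_{1}\rangle=1$. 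This bypasses both the matrix algebra and the intermediate condition $(ii)$, and makes transparent \emph{why} the failure of $(iii)$ enlarges the up-Laplacian: the extra direction $x_{1}\in\Theta_{gf}\ominus g(\Theta_{f})$ captures strictly more of $d^{\ast}g(u)$. The paper's approach, on the other hand, yields the finer structural condition $(ii)$ directly from $(i)$, which is of some independent interest even though it is equivalent to $(iii)$.
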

\begin{proof}
$(i)\Rightarrow(ii)$. By definition, we obtain $\Delta_{f}=f^{\ast}d\iota\iota^{\ast}d^{\ast}f+d^{\ast}d$ and $\Delta_{gf}=f^{\ast}g^{\ast}d\iota\iota^{\ast}d^{\ast}gf+d^{\ast}d$. Since $\Delta_{f}=\Delta_{gf}$ for any $f$, we have $f^{\ast}d\iota\iota^{\ast}d^{\ast}f=f^{\ast}g^{\ast}d\iota\iota^{\ast}d^{\ast}gf$ for any $f$.
\begin{equation*}
  \xymatrix{
  V\ar@{->}[r]^{f} &W\ar@{->}[r]^{\iota^{\ast} d^{\ast}}\ar@{->}[d]_{g}&\Theta_{f}\ar@{->}[r]^{d\iota}&W\ar@{->}[r]^{f^{\ast}}&V\\
  &X\ar@{->}[r]^{\iota^{\ast}d^{\ast}}&\Theta_{gf}\ar@{->}[r]^{d\iota}&X\ar@{->}[u]_{g^{\ast}} &\\
 }
\end{equation*}
Take $V=W$ and $f=\mathrm{id}$. We have $dd^{\ast}=g^{\ast}d\iota\iota^{\ast}d^{\ast}g$. Let $w_{1},\dots,w_{n}$ be an orthogonal basis of $W$, and let $e_{1},\dots,e_{m}$ be an orthogonal basis of $X_{1}$. Then $g(w_{1}),\dots,g(w_{n}),e_{1},\dots,e_{m}$ is an orthogonal basis  of $X$. Let $B\in \mathbb{R}^{n\times n}$ be the representation matrix of $d$ on $W$ with respect to the chosen orthogonal basis. Then the representation matrix of $d$ on $X$ is of the form $\left(
                                                                                                                                    \begin{array}{cc}
                                                                                                                                      B' & \mathbf{0} \\
                                                                                                                                      D & C \\
                                                                                                                                    \end{array}
                                                                                                                                  \right)
$, where $B'\in \mathbb{R}^{n\times n}, C\in \mathbb{R}^{m\times m},D\in \mathbb{R}^{m\times n}$. On the other hand, the representation matrix of $g$ is $\left(
                                                             \begin{array}{cc}
                                                               I_{n} & \mathbf{0} \\
                                                             \end{array}
                                                           \right)
$. Since $dg=gd$, one has $B'=B$. Let $g(w_{1}),\dots,g(w_{n}),e'_{1},\dots,e'_{r}$ be an orthogonal basis of $\Theta_{g}$. Then the representation matrix of $\iota$ is of the form $\left(
                                                                                                                                    \begin{array}{cc}
                                                                                                                                      I_{n} & \mathbf{0} \\
                                                                                                                                      0 & Q \\
                                                                                                                                    \end{array}
                                                                                                                                  \right)
$, where $Q\in \mathbb{R}^{r\times m}$. A straightforward calculation shows that
\begin{equation*}
\begin{split}
  g^{\ast}d\iota\iota^{\ast}d^{\ast}g\left(
                      \begin{array}{c}
                        w_{1} \\
                        \vdots \\
                        w_{n} \\
                      \end{array}
                    \right)=&\left(
                                                             \begin{array}{cc}
                                                               I_{n} & \mathbf{0} \\
                                                             \end{array}
                                                           \right)\left(
                                                                                                                                    \begin{array}{cc}
                                                                                                                                      B^{T} & D^{T} \\
                                                                                                                                      \mathbf{0} & C^{T} \\
                                                                                                                                    \end{array}
                                                                                                                                  \right)\left(
                                                                                                                                    \begin{array}{cc}
                                                                                                                                      I_{n} & \mathbf{0} \\
                                                                                                                                      0 & Q^{T} \\
                                                                                                                                    \end{array}
                                                                                                                                  \right)\cdot\\
  &\left(
                                                                                                                                    \begin{array}{cc}
                                                                                                                                      I_{n} & \mathbf{0} \\
                                                                                                                                      0 & Q \\
                                                                                                                                    \end{array}
                                                                                                                                  \right)\left(
                                                                                                                                    \begin{array}{cc}
                                                                                                                                      B & \mathbf{0} \\
                                                                                                                                      D & C \\
                                                                                                                                    \end{array}
                                                                                                                                  \right)\left(
                                                                                                                                           \begin{array}{c}
                                                                                                                                             I_{n} \\
                                                                                                                                             \mathbf{0} \\
                                                                                                                                           \end{array}
                                                                                                                                         \right)\left(
                      \begin{array}{c}
                        w_{1} \\
                        \vdots \\
                        w_{n} \\
                      \end{array}
                    \right)\\
=&(B^{T}B+D^{T}Q^{T}QD)\left(
                      \begin{array}{c}
                        w_{1} \\
                        \vdots \\
                        w_{n} \\
                      \end{array}
                    \right).
\end{split}
\end{equation*}
Here, $B^{T}$ is the transpose matrix of $B$, which is the representation matrix of $d^{\ast}$. Note that
\begin{equation*}
 d d^{\ast}\left(
                      \begin{array}{c}
                        w_{1} \\
                        \vdots \\
                        w_{n} \\
                      \end{array}
                    \right)=B^{T}B\left(
                      \begin{array}{c}
                        w_{1} \\
                        \vdots \\
                        w_{n} \\
                      \end{array}
                    \right).
\end{equation*}
Since $dd^{\ast}=g^{\ast}d\iota\iota^{\ast}d^{\ast}g$, we have $D^{T}Q^{T}QD=\mathbf{0}$. It follows that $QD=\mathbf{0}$.

For each $x_{1}\in X_{1}$, we will show that $dx_{1}=g(dw_{1})$ implies $dx_{1}=0$. If $dx_{1}\neq0$, we may assume $\|w_{1}\|=\sqrt{\langle w_{1},w_{1}\rangle}=1$ and choose an orthogonal basis $w_{1},w_{2},\dots,w_{n}$ of $W$ such that $w_{2}=\lambda dw_{1}$ for $\lambda=\frac{1}{\|dw_{1}\|}\in \mathbb{R}$. Choose an orthogonal basis $\mu x_{1},e_{2}\dots,e_{m}$ of $X_{1}$ for $\mu=\frac{1}{\|x_{1}\|}\in \mathbb{R}$. Since $dx_{1}=g(dw_{1})$, we may choose an orthogonal basis $g(w_{1}),\dots,g(w_{n}),\mu x_{1},e'_{2},\dots,e'_{r}$ of $\Theta_{g}$. Then we have the representation matrices
\begin{equation*}
  D=\left(
      \begin{array}{ccccc}
        0 & \frac{\mu}{\lambda} &0 & \cdots&0 \\
        \ast & \ast &\ast &  \cdots&\ast \\
        \vdots & \vdots &\ddots&  \cdots&\ast \\
        \ast & \ast &\ast &  \cdots&\ast \\
      \end{array}
    \right),\quad Q=\left(
      \begin{array}{ccccc}
       1 & 0 &0 & \cdots&0 \\
        \ast & \ast &\ast &  \cdots&\ast \\
        \vdots & \vdots &\ddots&  \cdots&\ast \\
        \ast & \ast &\ast &  \cdots&\ast \\
      \end{array}
    \right).
\end{equation*}
Thus it is impossible that $QD=\mathbf{0}$. This leads to a contradiction. So $dx_{1}=g(dw_{1})$ implies $dx_{1}=0$.

Now, we will prove $(ii)$. For each $x_{1}\in X_{1}$, by the decomposition $X=g(W)\oplus X_{1}$, we have $dx_{1}=g(w)+y_{1}$ for some $w\in W$ and $y_{1}\in X_{1}$. If $dx_{1}\neq 0$, we will prove that
$y_{1}$ is a nonzero cycle. Suppose that $y_{1}=0$. By Corollary \ref{corollary:decomposition}, we have $w=w_{0}+dw_{1}$ for some $w_{0}\in \ker d^{\ast}$ and $dw_{1}\in \im d$. It follows that $dx_{1}=g(w_{0})+dg(w_{1})$. Hence, we have
\begin{equation*}
  g^{\ast}d\iota\iota^{\ast}d^{\ast}g(w_{0})=dd^{\ast}w_{0}=0.
\end{equation*}
By the positive definiteness of inner product, one has $\iota^{\ast}d^{\ast}g(w_{0})=0$. It follows that
\begin{equation*}
  \langle d^{\ast}g(w_{0}),z\rangle=0
\end{equation*}
for any $z\in\Theta_{g}$. Note that $g(w_{0})=dx_{1}-dg(w_{1})\in dW$. One has $g(w_{0})\in d\Theta_{g}$. Choose $dz=g(w_{0})$, we have $g(w_{0})=0$ by the positive definiteness of inner product. It follows that $w_{0}=0$. Thus we have $dx_{1}=g(dw_{1})$, which implies that $dx_{1}=0$. This leads to a contradiction.
Hence, one has $y_{1}\neq 0$. Moreover, we have $dy_{1}=-dg(w)$. This implies that $dy_{1}=0$. The condition $(ii)$ is proved.

$(ii)\Rightarrow(iii)$. 
Suppose $dx_{1}\in g(W)$ for some $x_{1}\in X_{1}$. By $(ii)$, we have $dx_{1}=0$. Thus we obtain $g(W)\cap dX_{1}=0$.

$(iii)\Rightarrow(i)$. By $(iii)$, one has $\Theta_{g}=g(W)\oplus Z(X_{1})$, where $Z(X_{1})$ is the space of cycles of $X_{1}$.
For each $x\in \Theta_{gf}\subseteq \Theta_{g}$, one has $x=g(w)+x_{1}$ for some $w\in W$ and $x_{1}\in Z(X_{1})$. Since $dx\in gf(V)$, we obtain $dg(w)\in gf(V)$. Thus we have $dw=f(v)$ for some $v\in V$. We will prove
\begin{equation*}
  g\iota^{\ast}d^{\ast}f=\iota^{\ast}d^{\ast}gf.
\end{equation*}
For any $u\in V$, we have
\begin{equation*}
  \langle g\iota^{\ast}d^{\ast}f(u),g(w)+x_{1}\rangle =\langle f(u),d\iota (w)\rangle+\langle \iota^{\ast}d^{\ast}f(u),g^{\ast}(x_{1})\rangle=\langle f(u),f(v)\rangle.
\end{equation*}
On the other hand, we obtain
\begin{equation*}
  \langle \iota^{\ast}d^{\ast}gf(u),g(w)+x_{1}\rangle =\langle gf(u),d\iota g(w)\rangle+\langle gf(u),d\iota x_{1}\rangle=\langle f(u), f(v)\rangle.
\end{equation*}
It follows that $\langle g\iota^{\ast}d^{\ast}f(u),x\rangle=\langle \iota^{\ast}d^{\ast}gf(u),x\rangle$ for any $x\in \Theta_{gf}$. Thus we have $g\iota^{\ast}d^{\ast}f=\iota^{\ast}d^{\ast}gf$. Moreover, we obtain
\begin{equation*}
  f^{\ast}g^{\ast}d\iota\iota^{\ast}d^{\ast}gf=f^{\ast}d\iota g^{\ast}g\iota^{\ast}d^{\ast}f=f^{\ast}d\iota\iota^{\ast}d^{\ast}f.
\end{equation*}
The condition $(i)$ follows.
\end{proof}
\begin{corollary}
Let $g:W\to X$ be a morphism of differential graded inner product spaces. We have a direct sum decomposition $X=g(W)\oplus X_{1}$ of inner product spaces. If $dX_{1}\subseteq X_{1}$, then for any morphism $f:V\to W$ of differential graded inner product spaces, we have $\Delta_{f}=\Delta_{gf}$.
\end{corollary}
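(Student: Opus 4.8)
The plan is to reduce the corollary directly to Lemma \ref{lemma:condition} by checking that the hypothesis $dX_{1}\subseteq X_{1}$ forces condition $(iii)$ of that lemma, namely $g(W)\cap dX_{1}=0$. Once $(iii)$ is in hand, the implication $(iii)\Rightarrow(i)$ already proved in Lemma \ref{lemma:condition} gives $\Delta_{f}=\Delta_{gf}$ for every morphism $f:V\to W$, which is exactly the claim. So the whole argument is a one-line verification of $(iii)$ followed by an appeal to the lemma.

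To verify $(iii)$, I would use the directness of the given decomposition together with the hypothesis. Since $X=g(W)\oplus X_{1}$ is a direct sum decomposition of inner product spaces, the two summands meet only in zero, so $g(W)\cap X_{1}=0$. Now suppose $z\in g(W)\cap dX_{1}$. Then $z\in g(W)$, while the hypothesis $dX_{1}\subseteq X_{1}$ gives $z\in X_{1}$; hence $z\in g(W)\cap X_{1}=0$. Therefore $g(W)\cap dX_{1}=0$, establishing condition $(iii)$ and finishing the proof via Lemma \ref{lemma:condition}.

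There is essentially no obstacle: the substantive work has already been carried out in Lemma \ref{lemma:condition}, and the invariance hypothesis $dX_{1}\subseteq X_{1}$ is precisely a clean sufficient condition that prevents any boundary coming from $X_{1}$ from landing inside $g(W)$. The only point worth a moment's care is that ``direct sum decomposition of inner product spaces'' is to be read in the orthogonal sense fixed earlier in the paper, so that the summands genuinely intersect trivially; with that reading the inclusion $g(W)\cap dX_{1}\subseteq g(W)\cap X_{1}=0$ is immediate and no further computation is required.
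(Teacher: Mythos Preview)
Your proposal is correct and matches the paper's intent: the corollary is stated immediately after Lemma \ref{lemma:condition} without a separate proof, so it is meant to follow directly from that lemma, and your verification of condition $(iii)$ via $dX_{1}\subseteq X_{1}\Rightarrow g(W)\cap dX_{1}\subseteq g(W)\cap X_{1}=0$ is precisely the intended one-line deduction.
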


Let $f:V\to W$ be a morphism of differential graded inner product spaces. We say $f$ is \emph{split} if $df(V)\cap d(f(V)^{\bot})=0$. Here, $f(V)^{\bot}$ denotes the orthogonal complement of $f(V)$ in $W$.
\begin{theorem}\label{theorem:equivalences}
Let $\mathcal{S}:(\mathbb{R},\leq)\to \mathbf{DGI}$ be a persistence differential graded inner product space.
For real numbers $b\leq c$, the following conditions are equivalent:
\begin{itemize}
  \item[$(i)$] For any $a\leq b$, we have $\Delta_{\mathcal{S}}^{a,b}=\Delta_{\mathcal{S}}^{a,c}$ on $\mathcal{S}_{a}$;
  \item[$(ii)$] $\Delta_{\mathcal{S}}^{b}=\Delta_{\mathcal{S}}^{b,c}$ on $\mathcal{S}_{b}$;
  \item[$(iii)$] The morphism $f_{b\to c}:\mathcal{S}_{b}\to \mathcal{S}_{c}$ is split, and the map $hf_{b\to c}:\mathcal{H}^{b}(\mathcal{S})\to\mathcal{H}^{b,c}(\mathcal{S})$ induced by $b\to c$ is an isomorphism;
  \item[$(iv)$] The morphism $f_{b\to c}:\mathcal{S}_{b}\to \mathcal{S}_{c}$ is split, and for any $a\leq b$, the map $hf_{b\to c}|_{\mathcal{H}^{a,b}}:\mathcal{H}^{a,b}(\mathcal{S})\to\mathcal{H}^{a,c}(\mathcal{S})$ induced by $b\to c$ is an isomorphism;
  \item[$(v)$] The morphism $f_{b\to c}:\mathcal{S}_{b}\to \mathcal{S}_{c}$ is split, and the map $H_{\ast}(f_{b\to c}):H^{b}_{\ast}(\mathcal{S})\to H^{b,c}_{\ast}(\mathcal{S})$ induced by $b\to c$ is an isomorphism.
  \item[$(vi)$] The morphism $f_{b\to c}:\mathcal{S}_{b}\to \mathcal{S}_{c}$ is split, and for any $a\leq b$, the map $H_{\ast}(f_{b\to c})|_{H^{a,b}_{\ast}(\mathcal{S})}:H^{a,b}_{\ast}(\mathcal{S})\to H^{a,c}_{\ast}(\mathcal{S})$ induced by $b\to c$ is an isomorphism.
\end{itemize}
\end{theorem}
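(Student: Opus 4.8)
The plan is to organize the six conditions into three tiers and reduce them to one core equivalence. First, I would dispose of the homological conditions $(v)$ and $(vi)$ by invoking the natural isomorphism $\tilde\rho\colon h\mathcal{S}\Rightarrow H\mathcal{S}$ of Theorem \ref{theorem:nature}. Naturality of $\rho$ produces, for each pair of indices, a commuting square relating $hf_{b\to c}$ to $H(f_{b\to c})$ with vertical isomorphisms, and $\rho$ carries each $\mathcal{H}^{a,b}(\mathcal{S})$ onto $H^{a,b}_{\ast}(\mathcal{S})$ as images. Hence $hf_{b\to c}$ (respectively its restriction to each $\mathcal{H}^{a,b}$) is an isomorphism onto its image precisely when $H_{\ast}(f_{b\to c})$ (respectively its restriction to each $H^{a,b}_{\ast}$) is, giving $(iii)\Leftrightarrow(v)$ and $(iv)\Leftrightarrow(vi)$. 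It then remains to prove $(i)\Leftrightarrow(ii)\Leftrightarrow(iii)\Leftrightarrow(iv)$, which I would close as the cycle $(i)\Rightarrow(iv)\Rightarrow(iii)\Rightarrow(ii)\Rightarrow(i)$. Throughout I write $g=f_{b\to c}$, $W=\mathcal{S}_{b}$, $X=\mathcal{S}_{c}$, and fix the decomposition $X=g(W)\oplus X_{1}$ with $X_{1}=g(W)^{\perp}$.

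For $(i)\Leftrightarrow(ii)$, I note that $f_{a\to c}=f_{b\to c}f_{a\to b}$, so $(i)$ asserts $\Delta_{f_{a\to b}}=\Delta_{gf_{a\to b}}$ for all $a\le b$, and $(ii)$ is exactly the instance $a=b$, $f_{a\to b}=\mathrm{id}$; thus $(i)\Rightarrow(ii)$ is immediate. For the converse I would exploit the structure of the proof of Lemma \ref{lemma:condition}: its passage from $(i)$ to $(iii)$ there tests only $f=\mathrm{id}$, and the equation it uses is literally $dd^{\ast}=g^{\ast}d\iota\iota^{\ast}d^{\ast}g$, which is precisely $\Delta^{b}_{\mathcal{S}}=\Delta^{b,c}_{\mathcal{S}}$. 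So the single equality in $(ii)$ already forces $g(W)\cap dX_{1}=0$, and then $(iii)\Rightarrow(i)$ of the lemma returns $\Delta_{f}=\Delta_{gf}$ for every morphism, in particular every $f_{a\to b}$. This shows $(ii)$ is equivalent to the working hypothesis $g(W)\cap dX_{1}=0$.

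Next I would extract the two ingredients of $(iii)$ from $g(W)\cap dX_{1}=0$. Splitness $dg(W)\cap dX_{1}=0$ is immediate, since any common value $dg(w)=dx_{1}$ lies in $g(W)\cap dX_{1}$. For the isomorphism, Proposition \ref{proposition:isomorphism} gives $\ker\Delta_{g}\subseteq h(W)$ with $hg|_{\ker\Delta_{g}}\colon\ker\Delta_{g}\to hgh(W)=\mathcal{H}^{b,c}(\mathcal{S})$ an isomorphism; since $hf_{b\to c}\colon\mathcal{H}^{b}(\mathcal{S})=h(W)\to\mathcal{H}^{b,c}(\mathcal{S})$ is surjective by construction, it is an isomorphism exactly when $h(W)=\ker\Delta_{g}$. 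I would prove $h(W)\subseteq\ker\Delta_{g}$ by using that $g(W)\cap dX_{1}=0$ forces $\Theta_{g}=g(W)\oplus Z(X_{1})$, so $d\Theta_{g}=g(dW)$, and for $x\in h(W)$ one gets $\langle g(x),g(dw)\rangle=\langle x,dw\rangle=0$, i.e. $\iota^{\ast}d^{\ast}g(x)=0$. For the converse $(iii)\Rightarrow(ii)$, assuming splitness and $h(W)=\ker\Delta_{g}$, I take $v=g(w)=dx_{1}\in g(W)\cap dX_{1}$, observe $dw=0$, write $w=w_{0}+dw'$ with $w_{0}\in h(W)=\ker\Delta_{g}$, and show $g(w_{0})\in d\Theta_{g}$ while $g(w_{0})\perp d\Theta_{g}$, forcing $w_{0}=0$ and hence $v\in dg(W)\cap dX_{1}=0$; this recovers $g(W)\cap dX_{1}=0$, i.e. $(ii)$. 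Finally, $(iv)\Rightarrow(iii)$ is the case $a=b$, while $(i)\Rightarrow(iv)$ follows from Corollary \ref{corollary:isomorphism}: under $(i)$ we have $\dim\mathcal{H}^{a,b}=\dim\ker\Delta^{a,b}_{\mathcal{S}}=\dim\ker\Delta^{a,c}_{\mathcal{S}}=\dim\mathcal{H}^{a,c}$, so the surjection $hf_{b\to c}|_{\mathcal{H}^{a,b}}\colon\mathcal{H}^{a,b}\to\mathcal{H}^{a,c}$ is an isomorphism, splitness being inherited from $(ii)$.

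The hard part will be the two-way translation between the operator equality $(ii)$ and the geometric statement $(iii)$. The two delicate points are the identification $d\Theta_{g}=g(dW)$, which is exactly where $g(W)\cap dX_{1}=0$ is indispensable and cannot be weakened to splitness, and, in the reverse direction, recognizing that splitness is precisely the extra input needed to annihilate the residual term $v\in dg(W)$ once the harmonic component has been eliminated via $h(W)=\ker\Delta_{g}$. The remaining care is bookkeeping: tracking which of the maps $hf_{b\to c}$ and their restrictions are surjective by construction, so that in every case ``isomorphism'' legitimately reduces to the dimension count supplied by Corollary \ref{corollary:isomorphism} and Proposition \ref{proposition:isomorphism}.
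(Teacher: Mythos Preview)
Your proposal is correct and relies on the same toolkit as the paper: Theorem \ref{theorem:nature} for $(iii)\Leftrightarrow(v)$ and $(iv)\Leftrightarrow(vi)$, Lemma \ref{lemma:condition} as the bridge between the operator equality and the geometric condition $g(W)\cap dX_{1}=0$, and Proposition \ref{proposition:isomorphism}/Corollary \ref{corollary:isomorphism} for the dimension counts. The overall strategy is the same; only the routing of the implications differs.

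Two minor points of contrast are worth recording. First, for $(ii)\Rightarrow(iii)$ the paper argues injectivity of $hf_{b\to c}$ directly: if $hf_{b\to c}(x)=0$ then $f_{b\to c}(x)\in d\mathcal{S}_{c}$, and the condition $g(W)\cap dX_{1}=0$ forces $x\in d\mathcal{S}_{b}\cap h(\mathcal{S}_{b})=0$. Your route instead identifies $h(W)=\ker\Delta_{g}$ via $\Theta_{g}=g(W)\oplus Z(X_{1})$ and then invokes the isomorphism of Proposition \ref{proposition:isomorphism}; this is a clean alternative and makes the role of $\ker\Delta_{g}$ more explicit. Second, to close the cycle back to $(i)$ the paper passes through $(v)\Rightarrow(i)$ by verifying condition $(ii)$ of Lemma \ref{lemma:condition}, whereas you argue $(iii)\Rightarrow(ii)$ by verifying condition $(iii)$ of that lemma directly, using $w_{0}\in h(W)=\ker\Delta_{g}$ to force $g(w_{0})\in d\Theta_{g}\cap(d\Theta_{g})^{\perp}$. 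Both are valid; your version avoids the case split on $y_{1}$. Finally, your $(i)\Rightarrow(iv)$ via the dimension count from Corollary \ref{corollary:isomorphism} is a legitimate shortcut replacing the paper's $(iii)\Rightarrow(iv)$ restriction argument, since surjectivity of $hf_{b\to c}|_{\mathcal{H}^{a,b}}$ is, as you note, built into the definition $\mathcal{H}^{a,c}=h(f_{b\to c})(\mathcal{H}^{a,b})$.
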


\begin{proof}
$(i)\Rightarrow(ii)$. It is obtained by taking $a=b$.

$(ii)\Rightarrow(iii)$. By Lemma \ref{lemma:condition}, the morphism $f_{b\to c}:\mathcal{S}_{b}\to \mathcal{S}_{c}$ is split.
By definition, $hf_{b\to c}$ is a surjection. If $hf_{b\to c}(x)=0$ for some $x\in \mathcal{H}^{b}(\mathcal{S})$, we have $f_{b\to c}(x)\in d^{\ast}\mathcal{S}_{c}\oplus d\mathcal{S}_{c}$. Note that $f_{b\to c}(\mathcal{H}^{b}(\mathcal{S}))\subseteq  \mathcal{H}^{c}(\mathcal{S})\oplus d\mathcal{S}_{c}$. It follows that $f_{b\to c}(x)\in d\mathcal{S}_{c}$. By Lemma \ref{lemma:condition}, we have a direct sum decomposition $\mathcal{S}_{c}=f_{b\to c}(\mathcal{S}_{b})\oplus X_{1}$ of inner product spaces such that $f_{b\to c}(\mathcal{S}_{b})\cap dX_{1}=0$. It follows that $f_{b\to c}(x)=df_{b\to c}(x_{0})+dx_{1}$ for some $x_{0}\in \mathcal{S}_{b},x_{1}\in X_{1}$. Thus we have
\begin{equation*}
  f_{b\to c}(x-dx_{0})=dx_{1}\in dX_{1},
\end{equation*}
which implies that $f_{b\to c}(x-dx_{0})=0$. Since $f_{b\to c}$ is injective, we have that $x=dx_{0}$. But $x\in \mathcal{H}^{b}(\mathcal{S})$, we obtain $x=0$. It follows that $hf_{b\to c}$ is an injection.

$(iii)\Rightarrow(vi)$. The morphism is given by the restriction of the isomorphism $hf_{b\to c}:\mathcal{H}^{b}(\mathcal{S})\to\mathcal{H}^{b,c}(\mathcal{S})$ induced by $b\to c$. Note that
\begin{equation*}
  \mathcal{H}^{a,c}(\mathcal{S})=hf_{a\to c}h(\mathcal{S}_{a})=h(f_{b\to c}f_{a\to b})h(\mathcal{S}_{a})=h(f_{b\to c})h(f_{a\to b})h(\mathcal{S}_{a})=h(f_{b\to c})(\mathcal{H}^{a,b}).
\end{equation*}
It follows that $hf_{b\to c}|_{\mathcal{H}^{a,b}}$ is a surjection. Thus $hf_{b\to c}|_{\mathcal{H}^{a,b}}$ is an isomorphism.

$(vi)\Rightarrow(iii)$. It is directly obtained by taking $a=b$.

$(v)\Rightarrow(i)$. Let $f_{b\to c}(\mathcal{S}_{b})^{\perp}$ be the orthogonal complement of $f_{b\to c}(\mathcal{S}_{b})$ in $\mathcal{S}_{c}$. For each element $x_{1}\in f_{b\to c}(\mathcal{S}_{b})^{\perp}$, we have $dx_{1}=f_{b\to c}(w)+y_{1}$ for some $w\in \mathcal{S}_{b}$ and $y_{1}\in f_{b\to c}(\mathcal{S}_{b})^{\perp}$. If $y_{1}=0$, we have $dx_{1}=f_{b\to c}(w)$. Since $H_{\ast}(f_{b\to c})$ is an isomorphism, one has $w=dw_{1}$ for some $w_{1}\in \mathcal{S}_{b}$. As $f_{b\to c}$ is split, we obtain that $dx_{1}=f_{b\to c}(dw_{1})=0$. If $y_{1}\neq 0$, one has $dy_{1}=-f_{b\to c}(dw)$. Since $f_{b\to c}$ is split, we have $dy_{1}=-f_{b\to c}(dw)=0$. By Lemma \ref{lemma:condition}, we have $\Delta_{f_{a\to b}}=\Delta_{f_{a\to b}f_{b\to c}}$, which shows that $\Delta_{\mathcal{S}}^{a,b}=\Delta_{\mathcal{S}}^{a,c}$ for any $a\leq b$.

In the end, we finish the proof since Theorem \ref{theorem:nature} indicates $(iii)\Leftrightarrow(v)$ and $(iv)\Leftrightarrow(vi)$.
\end{proof}

The above theorem shows the connection between the persistent Laplacian and the persistent homology. We see that if the persistent Laplacian $\Delta^{b,c}$ remains unchanged, then there is no homology generator death at parameter $c$.
\begin{corollary}\label{corollary:condition}
Let $\mathcal{S}:(\mathbb{R},\leq)\to \mathbf{DGI}$ be a persistence differential graded inner product space. Then  for real numbers $b\leq c$, $\Delta^{b,c}=\Delta^{b}$ if and only if $\beta^{b,c}=\beta^{b}$ and $f_{b\to c}:\mathcal{S}_{b}\to \mathcal{S}_{c}$ is split.
\end{corollary}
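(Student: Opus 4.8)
The plan is to recognize the equality $\Delta^{b,c}=\Delta^{b}$ as precisely condition $(ii)$ of Theorem \ref{theorem:equivalences}, where $\Delta^{b}$ denotes the Laplacian $\Delta^{b,b}_{\mathcal{S}}$ attached to the identity morphism $f_{b\to b}=\mathrm{id}_{\mathcal{S}_{b}}$ (for which $\Theta_{\mathrm{id}}=\mathcal{S}_{b}$ and $\iota=\mathrm{id}$, so that $\Delta_{\mathrm{id}}=dd^{\ast}+d^{\ast}d$ is the ordinary Laplacian). Invoking that theorem, I would immediately replace this condition by the equivalent condition $(v)$: the morphism $f_{b\to c}:\mathcal{S}_{b}\to\mathcal{S}_{c}$ is split, and the induced map $H_{\ast}(f_{b\to c}):H^{b}_{\ast}(\mathcal{S})\to H^{b,c}_{\ast}(\mathcal{S})$ is an isomorphism. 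Since splitness already appears verbatim in the statement of the corollary, the entire problem reduces to showing that, for this map, being an isomorphism is equivalent to the numerical equality $\beta^{b,c}=\beta^{b}$ of Betti numbers.

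The key observation is that the map $H_{\ast}(f_{b\to c}):H^{b}_{\ast}(\mathcal{S})\to H^{b,c}_{\ast}(\mathcal{S})$ is \emph{always surjective}. Indeed, under the natural isomorphism $\tilde{\rho}$ of Theorem \ref{theorem:nature} this map is identified with $hf_{b\to c}:\mathcal{H}^{b}(\mathcal{S})\to\mathcal{H}^{b,c}(\mathcal{S})$, and by the very definition $\mathcal{H}^{b,c}(\mathcal{S})=hf_{b\to c}h(\mathcal{S}_{b})=hf_{b\to c}(\mathcal{H}^{b}(\mathcal{S}))$ the target is exactly the image of the source. Hence $H^{b,c}_{\ast}(\mathcal{S})$ is a quotient of $H^{b}_{\ast}(\mathcal{S})$, so $\beta^{b,c}\leq\beta^{b}$ holds unconditionally. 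Because all spaces are finite dimensional, a surjection is an isomorphism if and only if its source and target have equal dimension; that is, $H_{\ast}(f_{b\to c})$ is an isomorphism if and only if $\beta^{b}=\beta^{b,c}$.

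Combining the two steps yields the corollary: $\Delta^{b,c}=\Delta^{b}$ holds if and only if $f_{b\to c}$ is split and $H_{\ast}(f_{b\to c})$ is an isomorphism, which in turn holds if and only if $f_{b\to c}$ is split and $\beta^{b,c}=\beta^{b}$. I expect no genuine obstacle beyond careful bookkeeping: the one point that must be stated cleanly is the tautological surjectivity of $H_{\ast}(f_{b\to c})$ onto the persistent homology $H^{b,c}_{\ast}(\mathcal{S})$, since it is this surjectivity that collapses the isomorphism condition into an equality of Betti numbers. The splitness hypothesis plays no role in this last reduction—it is merely carried along from condition $(v)$—so the substance of the argument is entirely the finite-dimensional rank argument applied to an a priori surjective map.
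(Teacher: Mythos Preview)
Your proposal is correct and follows exactly the route the paper intends: the corollary is stated without proof immediately after Theorem \ref{theorem:equivalences}, so it is meant to drop out of the equivalence $(ii)\Leftrightarrow(v)$ together with the tautological surjectivity of $H_{\ast}(f_{b\to c})$ onto $H^{b,c}_{\ast}(\mathcal{S})=\im H_{\ast}(f_{b\to c})$ and finite-dimensionality. The only superfluous step is your detour through Theorem \ref{theorem:nature} to establish surjectivity---it is already immediate from the definition of persistent homology as an image---but this does no harm.
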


Let $V$ be a differential graded inner product space. Let $\mathcal{M}(V)$ be the set of all the morphisms of differential graded inner product spaces with domain $V$, that is, the morphisms of the form $f:V\to W$ in $\mathbf{DGI}$. Here, $\mathcal{M}(V)$ is a set since $\mathbf{DGI}$ is the category of finite dimensional differential graded inner product spaces.
The \emph{Laplacian space} $L(V)$ is defined to be the set of all the Laplacians $\Delta_{f}$ given by the morphisms $f:V\to W$ in $\mathcal{M}(V)$.
We can regard $L(V)$ as the quotient of $\mathcal{M}(V)$ by the equivalence relation given by
\begin{equation*}
  f\sim g \quad\text{if }\Delta_{f}=\Delta_{g}.
\end{equation*}
A \emph{morphism $L(\phi):L(V)\to L(X)$ of Laplacian spaces} is a morphism $\phi:V\to X$ in $\mathbf{DGI}$ such that $L(\phi)(\Delta_{f})=\Delta_{g}$, where $g$ is given by the following pushout diagram.
\begin{equation*}
  \xymatrix{
  V\ar@{->}[r]^{f}\ar@{->}[d]_{\phi}\ar@{}[dr]|{\text{\LARGE{$\ulcorner$}}}&W\ar@{->}[d]^{\psi}\\
  X\ar@{->}[r]^{g}&Y
  }
\end{equation*}
We will prove that the above definition is well defined, that is, if $\Delta_{f_{1}}=\Delta_{f_{2}}$, then one has $L(\phi)(\Delta_{f_{1}})=L(\phi)(\Delta_{f_{2}})$. Before this, we first show the following lemma, which will be used to prove Proposition \ref{proposition:definition}.
\begin{lemma}\label{lemma:helpproof}
Let $f_{1}:V\to W_{1},f_{2}:V\to W_{2}$ be morphisms of differential graded inner product spaces. If $\Delta_{f_{1}}=\Delta_{f_{2}}$, then there exists a morphism $f:V\to W$ such that $f=g_{1}f_{1}=g_{2}f_{2}$ and $\Delta_{f_{1}}=\Delta_{f_{2}}=\Delta_{f}$.
\end{lemma}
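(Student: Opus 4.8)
The plan is to realize $W$ as a gluing of $W_{1}$ and $W_{2}$ along $V$ and to reduce the equality of the Laplacians to the splitting criterion of Lemma \ref{lemma:condition}. First I would form the pushout $\widetilde{W}=W_{1}\oplus_{V}W_{2}$ in the category of chain complexes, with structure chain maps $g_{1}\colon W_{1}\to\widetilde{W}$ and $g_{2}\colon W_{2}\to\widetilde{W}$ satisfying $g_{1}f_{1}=g_{2}f_{2}=:f$. Since every morphism of $\mathbf{DGI}$ is an isometric embedding (Lemma \ref{lemma:section}), the real content is to pass to a suitable quotient $W$ of $\widetilde{W}$ carrying an inner product for which $g_{1},g_{2}$ stay isometric. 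Once such a $W$ is built, it is enough to check that $g_{1}$ satisfies condition $(iii)$ of Lemma \ref{lemma:condition}, namely $g_{1}(W_{1})\cap d(g_{1}(W_{1})^{\perp})=0$; that lemma then gives $\Delta_{f}=\Delta_{g_{1}f_{1}}=\Delta_{f_{1}}$, and the hypothesis $\Delta_{f_{1}}=\Delta_{f_{2}}$ immediately yields $\Delta_{f}=\Delta_{f_{1}}=\Delta_{f_{2}}$.

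The delicate point, which I expect to be the main obstacle, is the choice of inner product. The restrictions of $\langle\cdot,\cdot\rangle$ to $g_{1}(W_{1})$ and to $g_{2}(W_{2})$ are forced to agree with those of $W_{1}$ and $W_{2}$, so the only freedom lies in the pairing between the complementary pieces $g_{1}(f_{1}(V)^{\perp})$ and $g_{2}(f_{2}(V)^{\perp})$. Declaring these orthogonal (the naive pushout inner product) does not work: computing $\Delta_{f}$ then adds the up-Laplacian contributions of $W_{1}$ and of $W_{2}$, overshooting $\Delta_{f_{1}}$. Already the two-dimensional example in which $f_{1},f_{2}$ each turn a single cycle into a boundary shows that the orthogonal gluing doubles the up-Laplacian, and that exact equality is impossible for any positive \emph{definite} inner product on $\widetilde{W}$. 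Hence the pairing must be taken maximal, so that the resulting form is only positive \emph{semidefinite}, and $W$ is obtained as the quotient $\widetilde{W}/N$ by the null space $N$ of this form; one must then check that $N$ is a subcomplex and meets $g_{1}(W_{1})$ trivially.

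This is exactly where $\Delta_{f_{1}}=\Delta_{f_{2}}$ is used. Setting $S_{i}=\iota^{\ast}d^{\ast}f_{i}\colon V\to\Theta_{f_{i}}$, the up-part of $\Delta_{f_{i}}$ equals $f_{i}^{\ast}d\iota\,\iota^{\ast}d^{\ast}f_{i}=S_{i}^{\ast}S_{i}$, so the hypothesis is equivalent to $S_{1}^{\ast}S_{1}=S_{2}^{\ast}S_{2}$. This operator identity produces a partial isometry aligning $\im S_{1}$ with $\im S_{2}$ compatibly with the differentials, and I would use this alignment to define the pairing: the boundary-creating directions of $W_{2}$ are matched to those already present in $W_{1}$, forcing each difference $g_{1}(u_{1})-g_{2}(u_{2})$ of a matched pair into $N$. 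Since $d\bigl(g_{1}(u_{1})-g_{2}(u_{2})\bigr)=f(v)-f(v)=0$ for such pairs, $N$ is $d$-stable, so $W=\widetilde{W}/N$ is a genuine object of $\mathbf{DGI}$ and $g_{1},g_{2}$ descend to isometric chain maps.

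It then remains to verify the splitting condition $g_{1}(W_{1})\cap d(g_{1}(W_{1})^{\perp})=0$ (and symmetrically for $g_{2}$): after the identification, any direction orthogonal to $g_{1}(W_{1})$ whose differential lands in $g_{1}(W_{1})$ has either been collapsed into $N$ or is a genuine cycle, so no new boundary of $g_{1}(W_{1})$ survives. Lemma \ref{lemma:condition} then upgrades this to $\Delta_{f}=\Delta_{f_{1}}$, finishing the proof. I expect the genuinely hard step to be the explicit construction of the maximal pairing together with the proof that the associated form is positive semidefinite with $d$-stable null space; the small example above is a reliable guide, since it isolates the doubling phenomenon that the quotient is designed to cancel.
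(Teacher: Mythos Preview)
Your instinct is right, and it is sharper than the paper's own argument. The paper takes exactly the construction you call the naive pushout: it sets $W=(W_{1}\oplus W_{2})/\tilde V$ with $\tilde V=\mathrm{span}\{f_{1}(x)-f_{2}(x):x\in V\}$, identifies $W$ with $\tilde V^{\perp}\subseteq W_{1}\oplus W_{2}$, and then deduces $\Delta_{f}=\Delta_{f_{1}}$ from a claimed factorization $kg_{1}=j_{1}$ (where $k\colon\tilde V^{\perp}\hookrightarrow W_{1}\oplus W_{2}$ and $j_{1}\colon W_{1}\hookrightarrow W_{1}\oplus W_{2}$) via two applications of Lemma~\ref{lemma:condition}. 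Your two--dimensional example already defeats this: with $V=\mathbb{R}v$, $W_{i}=\mathbb{R}w_{i}^{(0)}\oplus\mathbb{R}w_{i}^{(1)}$, $dw_{i}^{(1)}=w_{i}^{(0)}=f_{i}(v)$, the subspace $\tilde V^{\perp}$ is not $d$-stable (so $k$ is not a chain map), one has $kg_{1}(w_{1}^{(0)})=\tfrac{1}{2}(w_{1}^{(0)}+w_{2}^{(0)})\neq j_{1}(w_{1}^{(0)})$, and with the $\tilde V^{\perp}$ inner product $g_{1}$ is not isometric on $f_{1}(V)$. A further symptom: the paper's derivation of $\Delta_{f}=\Delta_{f_{1}}$ never invokes the hypothesis $\Delta_{f_{1}}=\Delta_{f_{2}}$, so if it were valid it would force $\Delta_{f_{1}}=\Delta_{f_{2}}$ for \emph{every} pair $f_{1},f_{2}$.

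Your repair---pairing the complementary pieces maximally via the partial isometry produced by $S_{1}^{\ast}S_{1}=S_{2}^{\ast}S_{2}$ and passing to the quotient by the null space---is the correct mechanism, and it resolves the toy example cleanly (there the pairing forces $w_{1}^{(1)}-w_{2}^{(1)}$ into the null space and the quotient is just $W_{1}$). What your sketch still owes, and what the paper does not supply either, are the general verifications you yourself isolate: that the resulting semidefinite form has $d$-stable null space $N$, that $N\cap g_{i}(W_{i})=0$ so the $g_{i}$ descend to isometric embeddings, and that the splitting condition of Lemma~\ref{lemma:condition} holds in $W=\widetilde W/N$. Those are precisely the places where the hypothesis $\Delta_{f_{1}}=\Delta_{f_{2}}$ must do real work.
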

\begin{proof}
Consider the pushout of $f_{1}$ and $f_{2}$ given by the following diagram.
\begin{equation*}
  \xymatrix{
  V\ar@{->}[r]^{f_{1}}\ar@{->}[d]_{f_{2}}\ar@{}[dr]|{\text{\LARGE{$\ulcorner$}}}&W_{1}\ar@{->}[d]^{g_{1}}\\
  W_{2}\ar@{->}[r]^{g_{2}}&W
  }
\end{equation*}
Here, $W=(W_{1}\oplus W_{2})/\sim$, where $\sim$ is generated by $f_{1}(x)-f_{2}(x)$ for all $x\in V$. Note that $g_{1}:W_{1}\to W, w_{1}\mapsto w_{1}$ for $w_{1}\in W_{1}$ and $g_{2}:W_{2}\to W, w_{2}\mapsto w_{2}$ for $w_{2}\in W_{2}$.
Let $f=g_{1}f_{1}=g_{2}f_{2}$. It leaves for us to prove $\Delta_{f_{1}}=\Delta_{f}$.

Let $\tilde{V}$ be the inner product space generated by $f_{1}(x)-f_{2}(x)$ for $x\in V$.
It follows that $d\tilde{V}\subseteq \tilde{V}$ and $\tilde{V}$ is a differential graded inner product space. Moreover, the quotient space $W= (W_{1}\oplus W_{2})/\tilde{V}$ is a differential graded inner product space, which is isomorphic to the orthogonal complement $\tilde{V}^{\perp}$ of $\tilde{V}$.
We identify $\tilde{V}^{\perp}$ and $W$, and we can regard $\tilde{V}^{\perp}$ as the pushout of $f_{1}$ and $f_{2}$.
Thus we have a morphism
\begin{equation*}
  k:W\cong \tilde{V}^{\perp}\hookrightarrow W_{1}\oplus W_{2}
\end{equation*}
of differential graded inner product spaces. Let $j_{1}:W_{1}\to W_{1}\oplus W_{2}$ and $j_{2}:W_{2}\to W_{1}\oplus W_{2}$. By Corollary \ref{corollary:condition}, we have $\Delta_{j_{1}f_{1}}=\Delta_{f_{1}}=\Delta_{f_{2}}=\Delta_{j_{2}f_{2}}$.
\begin{equation*}
  \xymatrix{
  W_{1}\ar@{^{(}->}[dr]^{j_{1}}\ar@{->}[d]_{g_{1}}&\\
  W\ar@{->}[r]^-{k}&W_{1}\oplus W_{2}
  }
\end{equation*}
For any $w_{1}\in W_{1}$, we have $kg_{1}(w_{1})=k(w_{1})=w_{1}=j_{1}(w_{1})$. It follows that $kg_{1}=j_{1}$. Similarly, we obtain $kg_{2}=j_{2}$. By Corollary \ref{corollary:condition}, we have  $\Delta_{g_{1}f_{1}}=\Delta_{kg_{1}f_{1}}=\Delta_{j_{1}f_{1}}=\Delta_{f_{1}}$.
\end{proof}

\begin{proposition}\label{proposition:definition}
Let $L(\phi):L(V)\to L(X)$ be a morphism of Laplacian spaces. If $\Delta_{f_{1}}=\Delta_{f_{2}}$ in $L(V)$, then we have $L(\phi)(\Delta_{f_{1}})=L(\phi)(\Delta_{f_{2}})$.
\end{proposition}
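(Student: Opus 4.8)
The plan is to reduce well-definedness to Lemma~\ref{lemma:helpproof} through the pasting law for pushouts. Let $f_{1}:V\to W_{1}$ and $f_{2}:V\to W_{2}$ be the two representatives with $\Delta_{f_{1}}=\Delta_{f_{2}}$, and recall that $L(\phi)(\Delta_{f_{i}})=\Delta_{g_{i}}$, where $g_{i}:X\to Y_{i}$ and $\psi_{i}:W_{i}\to Y_{i}$ are the two legs of the pushout of $\phi$ along $f_{i}$. First I would invoke Lemma~\ref{lemma:helpproof}: since $\Delta_{f_{1}}=\Delta_{f_{2}}$, the pushout $W$ of $f_{1}$ and $f_{2}$ carries structure maps $e_{1}:W_{1}\to W$ and $e_{2}:W_{2}\to W$ for which $f:=e_{1}f_{1}=e_{2}f_{2}$ is a single morphism and $\Delta_{f}=\Delta_{f_{1}}=\Delta_{f_{2}}$. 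Writing $\bar g:X\to Z$ for the pushout of $\phi$ along $f$, so that $L(\phi)(\Delta_{f})=\Delta_{\bar g}$ by definition, it suffices to prove $\Delta_{g_{1}}=\Delta_{\bar g}$ and $\Delta_{g_{2}}=\Delta_{\bar g}$.

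The bridge between $g_{i}$ and $\bar g$ is the two-stage computation of the pushout. Since $f=e_{i}f_{i}$, the pushout of $\phi$ along $f$ can be assembled by first taking the pushout of $\phi$ along $f_{i}$ (producing $Y_{i}$, $g_{i}$, $\psi_{i}$) and then taking the pushout of $\psi_{i}:W_{i}\to Y_{i}$ along $e_{i}:W_{i}\to W$; by the pasting law the outer rectangle is once more the pushout of $\phi$ along $e_{i}f_{i}=f$. This produces a morphism $m_{i}:Y_{i}\to Z$ with $\bar g=m_{i}g_{i}$, the pair $(Z,\bar g)$ being independent of $i$. Thus the whole statement collapses to the single claim that postcomposition with $m_{i}$ preserves the Laplacian, i.e.\ $\Delta_{m_{i}g_{i}}=\Delta_{g_{i}}$.

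To establish $\Delta_{m_{i}g_{i}}=\Delta_{g_{i}}$ I would appeal to Lemma~\ref{lemma:condition} and Corollary~\ref{corollary:condition}: it is enough that $m_{i}$ be split, meaning that in the decomposition $Z=m_{i}(Y_{i})\oplus X_{1}$ one has $m_{i}(Y_{i})\cap dX_{1}=0$ (condition~$(iii)$), the convenient sufficient form being that $X_{1}$ is $d$-invariant. I would verify this by realizing $Z$, the pushout of $\psi_{i}$ and $e_{i}$ out of $W_{i}$, through the embedding device of Lemma~\ref{lemma:helpproof}: $Z$ sits isometrically in $Y_{i}\oplus W$ as the orthogonal complement of the $d$-invariant subspace $\tilde U_{i}$ spanned by $\psi_{i}(w)-e_{i}(w)$ for $w\in W_{i}$, with $m_{i}$ the corresponding copy of $Y_{i}$ and $0\oplus W$ its $d$-invariant complement. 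Applying Corollary~\ref{corollary:condition} to the embedding $Z\hookrightarrow Y_{i}\oplus W$ and to $Y_{i}\hookrightarrow Y_{i}\oplus W$ then gives $\Delta_{m_{i}g_{i}}=\Delta_{g_{i}}$, whence $\Delta_{g_{1}}=\Delta_{\bar g}=\Delta_{g_{2}}$ and $L(\phi)(\Delta_{f_{1}})=L(\phi)(\Delta_{f_{2}})$.

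The step I expect to be the real obstacle is the splitness of $m_{i}$, equivalently the $d$-invariance of the complement of $m_{i}(Y_{i})$ in $Z$. This is exactly where the hypothesis $\Delta_{f_{1}}=\Delta_{f_{2}}$ is indispensable: the pasting identity $\bar g=m_{i}g_{i}$ holds for any pair $f_{1},f_{2}$, so some further input is unavoidable, and for a generic pair $e_{i}$ (and hence $m_{i}$) fails to be split---this already surfaces when $V$ is one-dimensional and one of $W_{1},W_{2}$ carries a degree-one class that the other lacks. Through Lemma~\ref{lemma:helpproof} the equality of Laplacians forces $e_{i}$ to be split, and I would then check that splitness is inherited under the cobase change giving $m_{i}$. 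As in Lemma~\ref{lemma:helpproof}, the genuinely delicate point is arranging the inner product on the pushout so that the structure maps are isometric and the identification $Z\cong\tilde U_{i}^{\perp}$ is compatible with $m_{i}$; once this bookkeeping is done, the required $d$-invariance, and with it Corollary~\ref{corollary:condition}, applies.
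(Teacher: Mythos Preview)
Your proposal follows essentially the same route as the paper: both invoke Lemma~\ref{lemma:helpproof} to reduce to the situation where one representative factors through the other (your $f=e_if_i$, the paper's $f_2=pf_1$), and both then need to show the induced map between pushouts ($m_i$ for you, $q$ for the paper) satisfies condition~$(iii)$ of Lemma~\ref{lemma:condition}. The only real difference is in that last verification: where you sketch an embedding argument into $Y_i\oplus W$ patterned on the proof of Lemma~\ref{lemma:helpproof}, the paper instead identifies the orthogonal complement of $q(Y)$ in $Y'$ explicitly as the \emph{same} $U$ that appears in the decomposition $W_2=p(W_1)\oplus U$, so that the required intersection condition $q(Y)\cap dU=0$ follows directly from $p(W_1)\cap dU=0$ and $X\cap dU=0$. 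This explicit identification is precisely the ``bookkeeping'' you anticipate needing; your plan is sound, but the paper's direct computation of the complement is a cleaner way to close that step than the double-embedding argument.
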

\begin{proof}
Suppose that $f_{1}:V\to W_{1},f_{2}:V\to W_{2}$ are morphisms of differential graded inner product spaces.
By Lemma \ref{lemma:helpproof}, we can reduce to the case that $f_{2}=pf_{1}$, where $p:W_{1}\to W_{2}$ is a morphism of differential graded inner product spaces. Consider the following diagram of pushouts.
\begin{equation*}
  \xymatrix{
  V\ar@{->}[r]^{f_{1}}\ar@{->}[d]_{\phi}\ar@{}[dr]|{\text{\LARGE{$\ulcorner$}}}&W_{1}\ar@{->}[r]^{p}\ar@{->}[d]^{\psi}&W_{2}\ar@{->}[dd]^{\psi'}\\
  X\ar@{->}[r]^{g}\ar@{=}[d]_{\mathrm{id}}&Y\ar@{.>}[rd]^{q}&\\
  X\ar@{->}[rr]^{g'}&&Y'
  }
\end{equation*}
Here, $Y$ is given by the pushout of $f_{1}$ and $\phi$ while $Y'$ is given by the pushout of $pf_{1}$ and $\phi$. Hence, we have
\begin{equation*}
  Y=(W_{1}\oplus X)/\sim,
\end{equation*}
where $\sim$ is generated by $f_{1}(x)-\phi(x)$ for all $x\in V$. Besides, we obtain
\begin{equation*}
  Y'=(W_{2}\oplus X)/\sim,
\end{equation*}
where $\sim$ is generated by $pf_{1}(x)-\phi(x)$ for all $x\in V$. Consider the map
\begin{equation*}
  q:Y=(W_{1}\oplus X)/\sim\to Y'=(W_{2}\oplus X)/\sim
\end{equation*}
defined by $q(w_{1}+x)=p(w_{1})+x$ for $w_{1}\in W_{1}$ and $x\in X$. Note that $dq(w_{1}+x)=dp(w_{1})+dx=pd(w_{1})+dx=qd(w_{1}+x)$ and
\begin{equation*}
  \langle p(w_{1})+x,p(w_{1}')+x'\rangle=\langle p(w_{1}),p(w_{1}')\rangle+\langle x,x'\rangle=\langle w_{1},w_{1}'\rangle+\langle x,x'\rangle= \langle w_{1}+x,w_{1}'+x'\rangle.
\end{equation*}
It follows that $q$ is a morphism of differential graded inner product spaces. Moreover, one can verify that $q\psi=\psi'p$ and $qg=g'$ by straightforward calculations.

Since $\Delta_{f_{1}}=\Delta_{f_{2}}=\Delta_{pf_{1}}$, by Lemma \ref{lemma:condition}, we obtain a direct sum decomposition $W_{2}=p(W_{1})\oplus U$ of inner product spaces such that $p(W_{1})\cap dU=0$. Thus we have
\begin{equation*}
  Y'=(p(W_{1})\oplus U\oplus X)/\sim=(p(W_{1})\oplus X)/\sim\oplus U=q(Y)\oplus U,
\end{equation*}
where $\sim$ is generated by $pf_{1}(x)-\phi(x)$ for all $x\in V$. Note that $p(W_{1})\cap dU=X\cap dU=0$. By Lemma \ref{lemma:condition}, one has $\Delta_{g'}=\Delta_{qg}=\Delta_{g}$, which completes the proof.
\end{proof}

Proposition \ref{proposition:definition} shows that the morphism of \emph{Laplacian spaces} is well defined. Thus we define the category $\mathbf{Lap}$ of Laplacian trees with
\begin{itemize}
  \item objects: all the pairs $(V,A)$, where $A$ is a subset of $L(V)$ for $V$ in $\mathbf{DGI}$;
  \item morphisms: all the morphisms $(\phi,\Phi):(V,A)\to (W,B)$ of pairs, where $\phi:V\to W$ is a morphism of differential graded inner product spaces and $\Phi=L(\phi)|_{A}:A\to B$ is a map of sets.
\end{itemize}
Moreover, one can verify that $\mathcal{L}:\mathbf{DGI}\to \mathbf{Lap},V\mapsto (V,L(V))$ is a functor.

Let $\mathcal{S}:(\mathbb{R},\leq)\to \mathbf{DGI}$ be a persistence differential graded inner product space.
For any $a\in \mathbb{R}$, let $\mathcal{L}_{\mathcal{S}}(a)=(\mathcal{S}_{a},L^{a})$, where $L^{a}=\{\Delta^{a,t}\}_{a\leq t}$. Then for $a\leq b$, we have a morphism
\begin{equation*}
  \mathcal{L}^{a,b}_{\mathcal{S}}=(f_{a\to b},L(f_{a\to b})|_{L^{a}}): \mathcal{L}_{\mathcal{S}}(a)\to \mathcal{L}_{\mathcal{S}}(b),\quad (\mathcal{S}_{a},L^{a})\mapsto (\mathcal{S}_{b},L^{b})
\end{equation*}
given by $L(f_{a\to b})|_{L^{a}}(\Delta^{a,t})=\Delta^{b,\max(b,t)}$ for $\Delta^{a,t}\in L^{a}$. The morphism is consistent with the previous definition of morphism of Laplacian trees. Indeed, we always have a pushout diagram
\begin{equation*}
  \xymatrix{
  \mathcal{S}_{a}\ar@{->}[r]\ar@{->}[d]\ar@{}[dr]|{\text{\LARGE{$\ulcorner$}}}&\mathcal{S}_{t}\ar@{->}[d]\\
  \mathcal{S}_{b}\ar@{->}[r]&\mathcal{S}_{\max(b,t)}
  }
\end{equation*}
in the category $\mathbf{DGI}$ of differential graded inner product spaces. Note that
\begin{equation*}
  L(f_{b\to c})L(f_{a\to b})(\Delta^{a,t})=L(f_{b\to c})(\Delta^{b,\max(b,t)})=\Delta^{c,\max(c,t)}=L(f_{a\to c})(\Delta^{a,t})
\end{equation*}
for any real numbers $a\leq b\leq c$. So we have a functor
\begin{equation*}
 \mathcal{L}_{\mathcal{S}}: (\mathbb{R},\leq)\to \mathbf{Lap},\quad a\mapsto(\mathcal{S}_{a},L^{a}).
\end{equation*}
The functor $\mathcal{L}_{\mathcal{S}}: (\mathbb{R},\leq)\to \mathbf{Lap}$ is a \emph{persistence Laplacian tree}.

Furthermore, we have a functor
\begin{equation*}
  \mathscr{L}:\mathbf{DGI}^{\mathbb{R}}\to \mathbf{Lap}^{\mathbb{R}}
\end{equation*}
from the category of persistence differential graded inner product spaces to the category of persistence Laplacian trees given by $\mathscr{L}(\mathcal{S})=\mathcal{L}_{\mathcal{S}}$.
The persistence diagrams represent the information of persistent homology, while the Laplacian trees represent the information of persistent Laplacians.

\section{The stability for persistent Laplacians}\label{section:main}
The stability for persistent Laplacians is always referred to the stability for persistence Laplacian trees in our theory.
Persistence Laplacian trees play a similar role to persistence modules. Just as a persistence module collects the generators of persistent homology, a persistence Laplacian tree collects the persistent Laplacians. The stability of persistent Laplacians indicates that the overall variation of Laplacians remains stable with disturbances of a persistence object.

\subsection{The algebraic stability theorem}
We first give the algebraic stability theorem for persistence Laplacian trees on persistence differential graded inner product spaces.
\begin{theorem}\label{theorem:main2}
Let $\mathcal{S},\mathcal{T}:(\mathbb{R},\leq)\to \mathbf{DGI}$ be two persistence differential graded inner product spaces. Then the persistence Laplacian trees $\mathcal{L}_{\mathcal{S}}$ and $\mathcal{L}_{\mathcal{T}}$ are $\varepsilon$-interleaved if and only if $\mathcal{S}$ and $\mathcal{T}$ are $\varepsilon$-interleaved.
\end{theorem}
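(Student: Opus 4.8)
The plan is to show that the functor $\mathscr{L}$ and a forgetful functor in the reverse direction both commute with the shift functors $\Sigma^{\varepsilon}$, and then to invoke the elementary principle that any functor commuting with the shifts carries $\varepsilon$-interleavings to $\varepsilon$-interleavings. Concretely, I would first record the following observation: if $F$ is a functor between persistence categories with $F\Sigma^{\varepsilon}=\Sigma^{\varepsilon}F$ and $F(\Sigma^{2\varepsilon}|_{\mathcal{F}})=\Sigma^{2\varepsilon}|_{F\mathcal{F}}$, and if $\phi,\psi$ realize an $\varepsilon$-interleaving of $\mathcal{F},\mathcal{G}$, then $F\phi$ and $F\psi$ realize an $\varepsilon$-interleaving of $F\mathcal{F},F\mathcal{G}$, since
\[
(\Sigma^{\varepsilon}F\psi)(F\phi)=F\big((\Sigma^{\varepsilon}\psi)\phi\big)=F(\Sigma^{2\varepsilon}|_{\mathcal{F}})=\Sigma^{2\varepsilon}|_{F\mathcal{F}},
\]
and symmetrically. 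Both implications of the theorem then become instances of this principle.

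For the \emph{if} direction, I would apply the principle to the functor $\mathscr{L}:\mathbf{DGI}^{\mathbb{R}}\to\mathbf{Lap}^{\mathbb{R}}$ already constructed in Section~\ref{section:laplaciancategory}. The only thing to check is the identity $\mathscr{L}\Sigma^{\varepsilon}=\Sigma^{\varepsilon}\mathscr{L}$. Unwinding the definitions, $(\mathscr{L}\Sigma^{\varepsilon}\mathcal{S})(a)=(\mathcal{S}_{a+\varepsilon},\{\Delta^{a+\varepsilon,\,t+\varepsilon}_{\mathcal{S}}\}_{t\ge a})$, whereas $(\Sigma^{\varepsilon}\mathscr{L}\mathcal{S})(a)=(\mathcal{S}_{a+\varepsilon},\{\Delta^{a+\varepsilon,\,s}_{\mathcal{S}}\}_{s\ge a+\varepsilon})$; the reparametrization $s=t+\varepsilon$ identifies the two Laplacian trees on the nose, and the same substitution matches the structure morphisms, giving $\mathscr{L}(\Sigma^{2\varepsilon}|_{\mathcal{S}})=\Sigma^{2\varepsilon}|_{\mathcal{L}_{\mathcal{S}}}$. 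Hence an $\varepsilon$-interleaving of $\mathcal{S}$ and $\mathcal{T}$ produces one of $\mathcal{L}_{\mathcal{S}}$ and $\mathcal{L}_{\mathcal{T}}$.

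For the \emph{only if} direction, the key is that $\mathscr{L}$ admits a shift-commuting retraction. A morphism $(\phi,\Phi)$ in $\mathbf{Lap}$ carries a well-defined underlying $\mathbf{DGI}$-morphism $\phi$, so the assignment $(V,A)\mapsto V$, $(\phi,\Phi)\mapsto\phi$ defines a functor $u:\mathbf{Lap}\to\mathbf{DGI}$ and, by postcomposition, a functor $U:\mathbf{Lap}^{\mathbb{R}}\to\mathbf{DGI}^{\mathbb{R}}$. Postcomposition commutes with reindexing, so $U\Sigma^{\varepsilon}=\Sigma^{\varepsilon}U$ and $U$ sends $\Sigma^{2\varepsilon}|$-morphisms to $\Sigma^{2\varepsilon}|$-morphisms. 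Moreover $U\mathscr{L}=\mathrm{id}_{\mathbf{DGI}^{\mathbb{R}}}$ on the nose, because $U(\mathcal{L}_{\mathcal{S}})(a)=\mathcal{S}_{a}$ and $U$ sends the structure morphism $\mathcal{L}^{a,b}_{\mathcal{S}}=(f_{a\to b},\,\cdot\,)$ back to $f_{a\to b}$. Applying the principle to $U$, an $\varepsilon$-interleaving of $\mathcal{L}_{\mathcal{S}}$ and $\mathcal{L}_{\mathcal{T}}$ yields one of $U\mathcal{L}_{\mathcal{S}}=\mathcal{S}$ and $U\mathcal{L}_{\mathcal{T}}=\mathcal{T}$, as desired.

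The routine verifications are the shift-commutation of $\mathscr{L}$ (a bookkeeping check on the index sets $\{\Delta^{a+\varepsilon,\,t+\varepsilon}\}$ versus $\{\Delta^{a+\varepsilon,\,s}\}$) and the functoriality of $u$. The main conceptual point, and the step I expect to carry the weight, is the recognition that forgetting the Laplacian-set component gives a strict retraction $U$ of $\mathscr{L}$ commuting with shifts; once this is in place the \emph{only if} direction---which superficially looks as though it should require reconstructing the persistence object from its Laplacians---becomes immediate, since no reconstruction is needed beyond reading off the underlying $\mathbf{DGI}$-morphisms already present in every $\mathbf{Lap}$-morphism.
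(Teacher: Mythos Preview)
Your proposal is correct and matches the paper's proof essentially step for step: the paper's functor $\mathscr{P}:\mathbf{Lap}^{\mathbb{R}}\to\mathbf{DGI}^{\mathbb{R}}$ is precisely your forgetful $U$, and for the other direction the paper likewise applies $\mathscr{L}$ and uses $\mathscr{L}(\Sigma^{\varepsilon}\mathcal{S})=\Sigma^{\varepsilon}\mathcal{L}_{\mathcal{S}}$. The general principle you isolate---that a shift-commuting functor preserves $\varepsilon$-interleavings---is exactly what the paper invokes by citing \cite[Proposition~3.6]{bubenik2014categorification}.
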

\begin{proof}
``$\Rightarrow$''. Consider the functor $\mathscr{P}:\mathbf{Lap}^{\mathbb{R}}\to \mathbf{DGI}^{\mathbb{R}}$ from the category of the persistence Laplacian trees to the category of persistence differential graded inner product spaces defined by
\begin{equation*}
  \mathscr{P}(\mathcal{I})(a)=\mathcal{I}_{a},\quad a\in \mathbb{R},
\end{equation*}
where $\mathcal{I}_{a}$ is given by $\mathcal{I}(a)=(\mathcal{I}_{a},A_{a})$. Indeed, for morphisms $\mathcal{I}\stackrel{f}{\to} \mathcal{J}\stackrel{g}{\to} \mathcal{K}$ of persistence Laplacian trees, we have
\begin{equation*}
  \mathscr{P}(gf)(a)=(gf)_{a}=g_{a}f_{a}=\mathscr{P}(g)(a)\mathscr{P}(f)(a),
\end{equation*}
where $\mathcal{I}_{a}\stackrel{f_{a}}{\to} \mathcal{J}_{a}\stackrel{g_{a}}{\to} \mathcal{K}_{a}$ are morphisms in $\mathbf{DGI}$. By \cite[Proposition 3.6]{bubenik2014categorification}, if $\mathcal{L}_{\mathcal{S}}$ and $\mathcal{L}_{\mathcal{T}}$ are $\varepsilon$-interleaved, then $\mathscr{P}\mathcal{L}_{\mathcal{S}}$ and $\mathscr{P}\mathcal{L}_{\mathcal{T}}$ are $\varepsilon$-interleaved. Note that $\mathscr{P}\mathcal{L}_{\mathcal{S}}=\mathscr{P}\mathscr{L}(\mathcal{S})=\mathcal{S}$ and $\mathscr{P}\mathcal{L}_{\mathcal{T}}=\mathcal{T}$. The desired result follows.

``$\Leftarrow$''. Suppose that $\mathcal{S}$ and $\mathcal{T}$ are $\varepsilon$-interleaved. By definition, we have two morphisms $\phi:\mathcal{S}\to \Sigma^{\varepsilon}\mathcal{T}$ and $\psi:\mathcal{T}\to \Sigma^{\varepsilon}\mathcal{S}$ in category $\mathbf{DGI}^{\mathbb{R}}$ such that the following two diagrams commute.
\begin{equation*}
  \xymatrix@=0.6cm{
  &\Sigma^{\varepsilon}\mathcal{T}\ar@{->}[rd]^{\Sigma^{\varepsilon}\psi}&\\
  \mathcal{S}\ar@{->}[ru]^{\phi}\ar@{->}[rr]^{\Sigma^{2\varepsilon}|_{\mathcal{S}}}&&\Sigma^{2\varepsilon}\mathcal{S}
  }\qquad \qquad
  \xymatrix@=0.6cm{
  &\Sigma^{\varepsilon}\mathcal{S}\ar@{->}[rd]^{\Sigma^{\varepsilon}\phi}&\\
  \mathcal{T}\ar@{->}[ru]^{\psi}\ar@{->}[rr]^{\Sigma^{2\varepsilon}|_{\mathcal{T}}}&&\Sigma^{2\varepsilon}\mathcal{T}
  }
\end{equation*}
Applying to the functor $\mathscr{L}:\mathbf{DGI}^{\mathbb{R}}\to \mathbf{Lap}^{\mathbb{R}}$ defined in Section \ref{section:laplaciancategory}, we have two commutative diagrams as follows.
\begin{equation*}
  \xymatrix@=0.6cm{
  &\Sigma^{\varepsilon}\mathcal{L}_{\mathcal{T}}\ar@{->}[rd]^{\Sigma^{\varepsilon}\mathscr{L}(\psi)}&\\
  \mathcal{L}_{\mathcal{S}}\ar@{->}[ru]^{\mathscr{L}(\phi)}\ar@{->}[rr]^{\Sigma^{2\varepsilon}|_{\mathcal{L}_{\mathcal{S}}}}&&\Sigma^{2\varepsilon}\mathcal{L}_{\mathcal{S}}
  }\qquad \qquad
  \xymatrix@=0.6cm{
  &\Sigma^{\varepsilon}\mathcal{L}_{\mathcal{S}}\ar@{->}[rd]^{\Sigma^{\varepsilon}\mathscr{L}(\phi)}&\\
  \mathcal{L}_{\mathcal{T}}\ar@{->}[ru]^{\mathscr{L}(\psi)}\ar@{->}[rr]^{\Sigma^{2\varepsilon}|_{\mathcal{L}_{\mathcal{T}}}}&&\Sigma^{2\varepsilon}\mathcal{L}_{\mathcal{T}}
  }
\end{equation*}
Here, we do not distinguish the notation $\Sigma^{x}$ in $\mathbf{DGI}^{\mathbb{R}}$ and in $\mathbf{Lap}^{\mathbb{R}}$, and we use the fact $\mathscr{L}(\Sigma^{\varepsilon}\mathcal{S})=\mathscr{L}(\Sigma^{\varepsilon})\mathscr{L}(\mathcal{S})=\Sigma^{\varepsilon}\mathcal{L}_{\mathcal{S}}$ and $\mathscr{L}(\Sigma^{\varepsilon}\phi)=\mathscr{L}(\Sigma^{\varepsilon})\mathscr{L}(\phi)=\Sigma^{\varepsilon}\mathscr{L}(\phi)$. Thus the morphisms $\mathscr{L}(\phi):\mathcal{L}_{\mathcal{S}}\to \Sigma^{\varepsilon}\mathcal{L}_{\mathcal{T}}$ and $\mathscr{L}(\psi):\mathcal{L}_{\mathcal{T}}\to \Sigma^{\varepsilon}\mathcal{L}_{\mathcal{S}}$ gives an $\varepsilon$-interleaving between $\mathcal{L}_{\mathcal{S}}$ and $\mathcal{L}_{\mathcal{T}}$ in category $\mathbf{Lap}^{\mathbb{R}}$.
\end{proof}

Recall the definition of interleaving distance in Section \ref{subsection:interleaving}, we have the following corollary.
\begin{corollary}\label{corollary:distance}
Let $\mathcal{S},\mathcal{T}$ be two persistence differential graded inner product spaces. Then
\begin{equation*}
  d_{I}(\mathcal{L}_{\mathcal{S}},\mathcal{L}_{\mathcal{T}})=d_{I}(\mathcal{S},\mathcal{T}).
\end{equation*}
\end{corollary}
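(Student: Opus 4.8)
The plan is to deduce the equality directly from Theorem \ref{theorem:main2} together with the definition of the interleaving distance recorded in Definition \ref{definition:distance}. The point is that the interleaving distance between two persistence objects is determined entirely by the set of nonnegative reals $\varepsilon$ for which the two objects are $\varepsilon$-interleaved, and Theorem \ref{theorem:main2} asserts that this set is left unchanged when one passes from $\mathcal{S},\mathcal{T}$ to their associated persistence Laplacian trees $\mathcal{L}_{\mathcal{S}},\mathcal{L}_{\mathcal{T}}$.

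Concretely, I would first introduce the two index sets
\begin{equation*}
  E_{\mathcal{L}}=\{\varepsilon\geq 0\mid \mathcal{L}_{\mathcal{S}}\text{ and }\mathcal{L}_{\mathcal{T}}\text{ are }\varepsilon\text{-interleaved}\},\qquad E=\{\varepsilon\geq 0\mid \mathcal{S}\text{ and }\mathcal{T}\text{ are }\varepsilon\text{-interleaved}\}.
\end{equation*}
By Definition \ref{definition:distance} we then have $d_{I}(\mathcal{L}_{\mathcal{S}},\mathcal{L}_{\mathcal{T}})=\inf E_{\mathcal{L}}$ and $d_{I}(\mathcal{S},\mathcal{T})=\inf E$. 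The key step is to invoke Theorem \ref{theorem:main2}, which, read for each fixed value of the parameter, states that for every $\varepsilon\geq 0$ the trees $\mathcal{L}_{\mathcal{S}}$ and $\mathcal{L}_{\mathcal{T}}$ are $\varepsilon$-interleaved precisely when $\mathcal{S}$ and $\mathcal{T}$ are $\varepsilon$-interleaved. Consequently $\varepsilon\in E_{\mathcal{L}}$ if and only if $\varepsilon\in E$, so $E_{\mathcal{L}}=E$ as subsets of $[0,\infty)$, and taking infima of the identical sets yields $d_{I}(\mathcal{L}_{\mathcal{S}},\mathcal{L}_{\mathcal{T}})=d_{I}(\mathcal{S},\mathcal{T})$.

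I do not anticipate any genuine obstacle here, since all of the substance is carried by Theorem \ref{theorem:main2}; the corollary is purely the observation that an infimum over a set depends only on the set itself. The only point requiring the slightest care is to note that the biconditional in Theorem \ref{theorem:main2} is quantified uniformly over $\varepsilon$, so it may legitimately be applied separately for each $\varepsilon\geq 0$ to conclude the set equality $E_{\mathcal{L}}=E$.
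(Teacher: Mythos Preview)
Your proposal is correct and matches the paper's approach: the corollary is stated immediately after Theorem \ref{theorem:main2} with no separate proof, being treated as a direct consequence of that theorem together with the definition of interleaving distance. Your explicit unpacking via the sets $E$ and $E_{\mathcal{L}}$ simply spells out what the paper leaves implicit.
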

By Theorem \ref{theorem:nature}, one has $d_{I}(h(\mathcal{S}),h(\mathcal{T}))=d_{I}(H(\mathcal{S}),H(\mathcal{T}))$. Combining \cite[Proposition 3.6]{bubenik2014categorification} and Corollary \ref{corollary:distance}, we obtain
\begin{equation*}
  d_{I}(h(\mathcal{S}),h(\mathcal{T}))=d_{I}(H(\mathcal{S}),H(\mathcal{T}))\leq d_{I}(\mathcal{S},\mathcal{T})= d_{I}(\mathcal{L}_{\mathcal{S}},\mathcal{L}_{\mathcal{T}}).
\end{equation*}

\subsection{The Laplacian stability on simplicial complexes}\label{subsection:complex}
In this section, we will further study the algebraic stability for Laplacian trees on persistence simplicial complexes.

Let $K$ be a simplicial complex. We have a chain complex $C_{\ast}(K;G)$ with the boundary operator $\partial_{p}:C_{p}(K;G)\to C_{p-1}(K;G)$ given by
\begin{equation*}
  \partial_{p}[v_{0},v_{1},\dots,v_{p}]=\sum\limits_{i=0}^{p}(-1)^{i}[v_{0},\dots,\widehat{v_{i}},\dots,v_{p}],
\end{equation*}
for any simplex $[v_{0},v_{1},\dots,v_{p}]$ in $K_{p}$, where $G$ is the coefficient group and $\widehat{v_{i}}$ means that $v_{i}$ is omitted \cite{munkres2018elements}.
Let $\mathbf{Cpx}$ be the category of simplicial complexes. We have a functor $C_{\ast}(-;\mathbb{R}):\mathbf{Cpx}\to \mathbf{Chain}_{\mathbb{R}},K \mapsto C_{\ast}(K;\mathbb{R})$ from the category of simplicial complexes to the category of chain complexes.
For each simplicial complex $K$, we endow $C_{\ast}(K;\mathbb{R})$ with an inner product given by
\begin{equation*}
  \langle \sigma,\tau\rangle=\left\{
                               \begin{array}{ll}
                                 1, & \hbox{$\tau=\sigma$;} \\
                                 0, & \hbox{otherwise.}
                               \end{array}
                             \right.
\end{equation*}
Here, $\sigma,\tau$ are simplices in $K$. It follows that $(C_{\ast}(K;\mathbb{R}),\langle\cdot,\cdot\rangle)$ is a differential graded inner product space. Let $\mathbf{Cpx}^{\hookrightarrow}$ be a subcategory of $\mathbf{Cpx}$ with simplicial complexes as objects and the inclusions of simplicial complexes as morphisms. Then the functor $C_{\ast}(-;\mathbb{R})$ induces a functor $\mathcal{C}: \mathbf{Cpx}^{\hookrightarrow}\to \mathbf{DGI}$ given by $K \mapsto  (C_{\ast}(K;\mathbb{R}),\langle\cdot,\cdot\rangle)$ with the above inner product.

\begin{example}
Let $K$ be a simplicial complex, and let $f$ be a real-valued function on $K$ such that $f(\sigma)\leq f(\tau)$ whenever $\sigma$ is a face of $\tau$. Then one has a functor $\mathcal{K}^{f}:(\mathbb{R},\leq )\to \mathbf{Cpx}^{\hookrightarrow}$  given by $\mathcal{K}^{f}(a)=f^{-1}((-\infty,a])$ for $a\in \mathbb{R}$, which is a persistence simplicial complex. It follows that the functor $\mathcal{C}\mathcal{K}^{f}:(\mathbb{R},\leq )\to \mathbf{DGI}$ is a persistence differential graded inner product space. Thus we have a Laplacian tree $\mathcal{L}^{f}=\mathcal{L}_{\mathcal{C}\mathcal{K}^{f}}$ associated to $f$.
\end{example}

Let $\mathcal{K}_{1},\mathcal{K}_{2}:(\mathbb{R},\leq )\to \mathbf{Cpx}^{\hookrightarrow}$ be two persistence simplicial complexes. If $\mathcal{K}_{1}$ and $\mathcal{K}_{2}$ are $\varepsilon$-interleaved, then $\mathcal{C}\mathcal{K}_{1}$ and $\mathcal{C}\mathcal{K}_{2}$ are $\varepsilon$-interleaved. By Theorem \ref{theorem:main2}, we have
\begin{equation*}
  d_{I}(\mathcal{L}_{\mathcal{C}\mathcal{K}_{1}},\mathcal{L}_{\mathcal{C}\mathcal{K}_{2}})=d_{I}(\mathcal{C}\mathcal{K}_{1},\mathcal{C}\mathcal{K}_{2})\leq d_{I}(\mathcal{K}_{1},\mathcal{K}_{2}).
\end{equation*}
Let $f,g$ be two non-decreasing real-valued functions on a simplicial complex $K$. Recall that $\|f-g\|_{\infty}=\sup\limits_{\sigma\in K}|f(\sigma)-g(\sigma)|$. Thus we have the algebraic stability theorem for simplicial complexes.
\begin{theorem}\label{theorem:complex}
Let $f,g$ be two non-decreasing real-valued functions on a simplicial complex $K$. Then
\begin{equation*}
d_{I}(\mathcal{L}^{f},\mathcal{L}^{g})\leq \|f-g\|_{\infty}.
\end{equation*}
\end{theorem}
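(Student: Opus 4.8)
The plan is to reduce the statement to the comparison established just before the theorem, by exhibiting an explicit $\|f-g\|_{\infty}$-interleaving of the two sublevel-set filtrations. Set $\varepsilon=\|f-g\|_{\infty}$. The crucial observation is purely combinatorial: since $|f(\sigma)-g(\sigma)|\leq \varepsilon$ for every simplex $\sigma\in K$, the condition $f(\sigma)\leq a$ forces $g(\sigma)\leq f(\sigma)+\varepsilon\leq a+\varepsilon$, and symmetrically. Hence at each parameter $a$ we obtain inclusions of simplicial complexes $\mathcal{K}^{f}(a)=f^{-1}((-\infty,a])\hookrightarrow g^{-1}((-\infty,a+\varepsilon])=\mathcal{K}^{g}(a+\varepsilon)$ and $\mathcal{K}^{g}(a)\hookrightarrow \mathcal{K}^{f}(a+\varepsilon)$. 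The only point requiring mild care here is that $f^{-1}((-\infty,a])$ really is a subcomplex, which is exactly where the non-decreasing hypothesis enters: if $\tau$ lies in the sublevel set and $\sigma$ is a face of $\tau$, then $f(\sigma)\leq f(\tau)\leq a$, so $\sigma$ is included as well.

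Next I would assemble these inclusions into natural transformations $\phi:\mathcal{K}^{f}\to \Sigma^{\varepsilon}\mathcal{K}^{g}$ and $\psi:\mathcal{K}^{g}\to \Sigma^{\varepsilon}\mathcal{K}^{f}$ in the category of persistence simplicial complexes. Naturality is automatic: in $\mathbf{Cpx}^{\hookrightarrow}$ every morphism is an inclusion, so each naturality square is a commuting square of inclusions. For the same reason the interleaving identities $(\Sigma^{\varepsilon}\psi)\phi=\Sigma^{2\varepsilon}|_{\mathcal{K}^{f}}$ and $(\Sigma^{\varepsilon}\phi)\psi=\Sigma^{2\varepsilon}|_{\mathcal{K}^{g}}$ hold on the nose, since the composite $\mathcal{K}^{f}(a)\hookrightarrow \mathcal{K}^{g}(a+\varepsilon)\hookrightarrow \mathcal{K}^{f}(a+2\varepsilon)$ is forced to coincide with the structure inclusion $\mathcal{K}^{f}(a)\hookrightarrow \mathcal{K}^{f}(a+2\varepsilon)$. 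This shows $\mathcal{K}^{f}$ and $\mathcal{K}^{g}$ are $\varepsilon$-interleaved, whence $d_{I}(\mathcal{K}^{f},\mathcal{K}^{g})\leq \varepsilon=\|f-g\|_{\infty}$.

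Finally I would invoke the inequality recorded immediately before the theorem, namely $d_{I}(\mathcal{L}_{\mathcal{C}\mathcal{K}_{1}},\mathcal{L}_{\mathcal{C}\mathcal{K}_{2}})=d_{I}(\mathcal{C}\mathcal{K}_{1},\mathcal{C}\mathcal{K}_{2})\leq d_{I}(\mathcal{K}_{1},\mathcal{K}_{2})$, which follows from the functoriality of $\mathcal{C}:\mathbf{Cpx}^{\hookrightarrow}\to \mathbf{DGI}$ together with Theorem \ref{theorem:main2}. Taking $\mathcal{K}_{1}=\mathcal{K}^{f}$ and $\mathcal{K}_{2}=\mathcal{K}^{g}$ and recalling the definition $\mathcal{L}^{f}=\mathcal{L}_{\mathcal{C}\mathcal{K}^{f}}$ gives $d_{I}(\mathcal{L}^{f},\mathcal{L}^{g})\leq d_{I}(\mathcal{K}^{f},\mathcal{K}^{g})\leq \|f-g\|_{\infty}$, as desired.

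I do not anticipate a serious obstacle: the content of the theorem is essentially the classical stability of sublevel-set filtrations repackaged through the Laplacian-tree functor, and the substantive work—the identity $d_{I}(\mathcal{L}_{\mathcal{S}},\mathcal{L}_{\mathcal{T}})=d_{I}(\mathcal{S},\mathcal{T})$ of Theorem \ref{theorem:main2}—has already been carried out. The mildest subtlety, beyond the subcomplex check above, is merely confirming that the interleaving morphisms land in the inclusion subcategory $\mathbf{Cpx}^{\hookrightarrow}$ so that the functor $\mathcal{C}$ applies; this is immediate because all the maps in sight are inclusions of sublevel sets.
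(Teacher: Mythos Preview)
Your proposal is correct and follows essentially the same route as the paper: set $\varepsilon=\|f-g\|_{\infty}$, exhibit the standard $\varepsilon$-interleaving of the sublevel-set filtrations $\mathcal{K}^{f}$ and $\mathcal{K}^{g}$ by inclusions, and then apply Theorem~\ref{theorem:main2} (via the functor $\mathcal{C}$) to pass to the Laplacian trees. Your write-up is in fact slightly more explicit than the paper's, spelling out why the sublevel sets are subcomplexes and why the interleaving triangles commute, but the argument is the same.
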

\begin{proof}
Let $\varepsilon=\|f-g\|_{\infty}$. Then we have inclusions $\mathcal{K}^{f}(a)\hookrightarrow \Sigma^{\varepsilon}\mathcal{K}^{g}(a)$ and $\mathcal{K}^{g}(a)\hookrightarrow \Sigma^{\varepsilon}\mathcal{K}^{f}(a)$ for any $a\in \mathbb{R}$. Consider the natural transformations $\phi:\mathcal{K}^{f}\Rightarrow \Sigma^{\varepsilon}\mathcal{K}^{g}$ and  $\psi:\mathcal{K}^{g}\Rightarrow \Sigma^{\varepsilon}\mathcal{K}^{f}$ defined by inclusions. By a direct verification, we have
\begin{equation*}
  (\Sigma^{\varepsilon}\psi)\phi=\Sigma^{2\varepsilon}|_{\mathcal{K}^{f}}: \mathcal{K}^{f}\to \Sigma^{2\varepsilon}\mathcal{K}^{f},
\end{equation*}
which makes $\mathcal{K}^{f}(a)$ into $\mathcal{K}^{f}(a+2\varepsilon)$ for any $a\in \mathbb{R}$. Similarly, we have $(\Sigma^{\varepsilon}\phi)\psi=\Sigma^{2\varepsilon}|_{\mathcal{K}^{g}}$. Thus the persistence simplicial complexes $\mathcal{K}^{f}$ and $\mathcal{K}^{g}$ are $\varepsilon$-interleaved. So we have
\begin{equation*}
  d_{I}(\mathcal{K}^{f},\mathcal{K}^{g})\leq \varepsilon.
\end{equation*}
By Theorem \ref{theorem:main2}, we obtain $d_{I}(\mathcal{L}^{f},\mathcal{L}^{g})=d_{I}(\mathcal{C}\mathcal{K}^{f},\mathcal{C}\mathcal{K}^{g})\leq d_{I}(\mathcal{K}^{f},\mathcal{K}^{g})\leq \varepsilon$.
\end{proof}

\begin{remark}
The distance between different functions defined on different simplicial complexes can also be taken into consideration. Let $(K,f)$ and $(M,g)$ be two simplicial complexes equipped with the corresponding non-decreasing real-valued functions, respectively. The persistence simplicial complexes $\mathcal{K}^{f}$ and $\mathcal{M}^{g}$ are $\varepsilon$-interleaved if there are inclusions of simplicial complexes
\begin{equation*}
  \mathcal{K}^{f}(a)\hookrightarrow \Sigma^{\varepsilon}\mathcal{M}^{g}(a),\quad\mathcal{M}^{g}(a)\hookrightarrow \Sigma^{\varepsilon}\mathcal{K}^{f}(a)
\end{equation*}
for any $a\in \mathbb{R}$. Moreover, we can obtain that $d_{I}(\mathcal{L}_{\mathcal{C}\mathcal{K}^{f}},\mathcal{L}_{\mathcal{C}\mathcal{M}^{g}})\leq d_{I}(\mathcal{K}^{f},\mathcal{M}^{g})$.
\end{remark}

\section{An application to real-valued functions on digraphs}\label{section:digraph}

Recently, the persistent path homology based on the GLMY theory \cite{grigor2012homologies,grigor2020path,grigor2017homologies} has garnered significant attention from researchers. It holds great potential for applications in handling graph-structured data. In this section, we delve into the algebraic stability theorem concerning digraphs.

\subsection{The persistent Laplacians of digraphs}
Let $G=(V,E)$ be a digraph, that is, a finite vertex set $V$ equipped with an edge set $E\subseteq V\times V$. An \emph{elementary $p$-path} on $V$ is a sequence $i_{0}i_{1}\cdots i_{p}$, where $i_{0},i_{1},\dots,i_{p}\in V$. Let $\Lambda_{p}$ be the linear space generated by all the elementary $p$-paths on $V$. We denote the basis of $\Lambda_{p}$ by $e_{i_{0}i_{1}\cdots i_{p}},i_{0},i_{1},\dots,i_{p}\in V$. Then $\Lambda_{\ast}=(\Lambda_{p})_{p}$ is a chain complex with the differential given by
\begin{equation*}
  \partial e_{i_{0}i_{1}\cdots i_{p}}=\sum\limits_{k=0}^{p}(-1)^{k}e_{i_{0}\cdots \widehat{i_{k}}\cdots i_{p}},\quad p\geq 1
\end{equation*}
and $\partial e_{i_{0}}=0$ for any $i_{0}\in V$. Here, $\widehat{i_{k}}$ means omission of the index $i_{k}$.
An \emph{allowed $p$-path} on $G$ is an elementary $p$-path $i_{0}i_{1}\cdots i_{p}$ such that $(i_{k-1},i_{k})\in E$ for $k=1,\dots,p$. Let $\mathcal{A}_{p}(G;\mathbb{R})$ be the linear space generated by all the allowed $p$-paths on $G$.
Let us endow $\mathcal{A}_{\ast}(G;\mathbb{R})$ with an inner product given by
\begin{equation*}
  \langle\sigma,\tau\rangle=\left\{
                              \begin{array}{ll}
                                1, & \hbox{$\sigma=\tau$;} \\
                                0, & \hbox{otherwise.}
                              \end{array}
                            \right.
\end{equation*}
Here, $\sigma,\tau$ are elementary paths in $\mathcal{A}_{\ast}(G;\mathbb{R})$.

Recall that the space of $\partial$-invariant $p$-paths is defined by
\begin{equation*}
  \Omega_{p}(G)=\Omega_{p}(G;\mathbb{R})=\{u\in \mathcal{A}_{p}(G;\mathbb{R})|\partial u\in \mathcal{A}_{p-1}(G;\mathbb{R})\}.
\end{equation*}
Then $\Omega_{\ast}(G)$ is a chain complex and inherits the inner product on $\mathcal{A}_{\ast}(G;\mathbb{R})$.
The \emph{path homology} of $G$ is defined by
\begin{equation*}
  H_{p}(G)=H_{p}(\Omega_{\ast}(G)),\quad p\geq 0.
\end{equation*}
The \emph{Hodge-Laplacian on digraph $G$} is defined by
\begin{equation*}
  \Delta=\partial^{\ast}\partial+\partial\partial^{\ast}.
\end{equation*}
By Corollary \ref{corollary:decomposition}, there is a direct sum decomposition
\begin{equation*}
  \Omega_{\ast}(G)=\mathcal{H}(G)\oplus\partial\Omega_{\ast}(G)\oplus \partial^{\ast}\Omega_{\ast}(G),
\end{equation*}
where $\mathcal{H}(G)\cong H_{\ast}(G)$.

\begin{example}\label{example:digraph}
Let $G=(V,E)$ be a digraph with vertex set $V=\{0,1,2,3\}$ and edge set $E=\{(0,1),(1,3),(0,2),(2,3)\}$.
\begin{figure}[htbp]
\centering
\begin{tikzpicture}[scale=1.2]
\begin{scope}[thick, every node/.style={sloped,allow upside down}]
\draw (-1,0) --node {\midarrow} (0,0.4)-- node {\midarrow}(1,0);
\draw (-1,0) --node {\midarrow} (0,-0.4)-- node {\midarrow}(1,0);
\draw[fill=black!100](-1,0) circle(1pt);
\draw[fill=black!100](1,0) circle(1pt);
\draw[fill=black!100](0,0.4) circle(1pt);
\draw[fill=black!100](0,-0.4) circle(1pt);
\node [font=\fontsize{8}{6}] (node001) at (-1.2,0){$0$};
\node [font=\fontsize{8}{6}] (node001) at (1.2,0){$3$};
\node [font=\fontsize{8}{6}] (node001) at (0,0.56){$1$};
\node [font=\fontsize{8}{6}] (node001) at (0,-0.56){$2$};
\end{scope}
\end{tikzpicture}
  \caption{The digraph $G$.}
\end{figure}
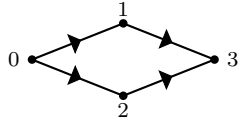
By a straightforward calculation, we have that $\Omega_{\ast}(G)=\mathrm{span}\{e_{0},e_{1},e_{2},e_{3},e_{01},e_{13},e_{02},e_{23},e_{013}-e_{023}\}.$
Moreover, there is a direct sum decomposition
\begin{equation*}
  \begin{split}
    \Omega_{\ast}(G) & = \mathrm{span}\{e_{0}+e_{1}+e_{2}+e_{3}\}\\
      & \oplus \mathrm{span}\{e_{013}-e_{023},e_{01}+e_{02},e_{01}+e_{23},e_{13}+e_{02}\}\\
      & \oplus \mathrm{span}\{e_{01}+e_{13}-e_{02}-e_{23},e_{1}-e_{0},e_{3}-e_{1},e_{2}-e_{0}\}.
  \end{split}
\end{equation*}
Here, the harmonic space $\mathcal{H}=\mathrm{span}\{e_{0}+e_{1}+e_{2}+e_{3}\}$ is isomorphic to the path homology of $G$.
\end{example}
Let $j:G_{1}\hookrightarrow  G_{2}$ be an inclusion of digraphs. Then one has $\mathcal{A}_{\ast}(G_{1})\subseteq \mathcal{A}_{\ast}(G_{2})$. It follows that $\Omega_{\ast}(G_{1})\subseteq \Omega_{\ast}(G_{2})$. Given a weighted digraph $(G,f)$, that is, a digraph $G=(V,E)$ equipped with a real-valued function $f$ on the edge set $E$, we have a functor
\begin{equation}\label{equation:persistence}
  \mathcal{D}^{f}:(\mathbb{R},\leq)\to \mathbf{Digraph}^{\hookrightarrow},\quad a\mapsto \mathcal{D}^{f}(a)=(V,f^{-1}((\infty,a])).
\end{equation}
Here, $\mathbf{Digraph}^{\hookrightarrow}$ is the subcategory of $\mathbf{Digraph}$ with digraphs as objects and the
inclusions of digraphs as morphisms. Thus we have a functor
\begin{equation*}
  \Omega_{\ast}:\mathbf{Digraph}^{\hookrightarrow}\to \mathbf{DGI},\quad G\mapsto \Omega_{\ast}(G),
\end{equation*}
which maps inclusions of digraphs to the morphisms in $\mathbf{DGI}$. Then we obtain a persistence differential graded inner product space
\begin{equation*}
  \Omega_{\ast}\mathcal{D}^{f}:(\mathbb{R},\leq)\to\mathbf{DGI}.
\end{equation*}
The \emph{$(a,b)$-persistent path homology} of the function $f$ on $G$ is
\begin{equation*}
  H^{a,b}_{\ast}(G)=\im (H_{\ast}(j_{a\to b}):H_{\ast}(\mathcal{D}^{f}(a))\to H_{\ast}(\mathcal{D}^{f}(b))).
\end{equation*}
Here, $j_{a\to b}:\Omega_{\ast}(\mathcal{D}^{f}(a))\to \Omega_{\ast}(\mathcal{D}^{f}(b))$ is a morphism of differential graded inner product spaces induced by the inclusion
$\mathcal{D}^{f}(a)\hookrightarrow \mathcal{D}^{f}(b)$ of digraphs. The \emph{$(a,b)$-persistent Laplacian} of $f$ on $G$ is
\begin{equation*}
  \Delta^{a,b}=j_{a\to b}^{\ast}d\iota\iota^{\ast}d^{\ast}j_{a\to b}\oplus d^{\ast}d,
\end{equation*}
where $\iota$ is defined as in Section \ref{section:Laplacians}. The \emph{$(a,b)$-persistent harmonic space} of $f$ on $G$ is defined by
\begin{equation*}
  \mathcal{H}^{a,b}(G)=hj_{a\to b}h\Omega_{\ast}(\mathcal{D}^{f}(a)).
\end{equation*}
We have two persistence inner product spaces $H^{f}(G)=H\Omega_{\ast}D^{f}:(\mathbb{R},\leq)\to \mathbf{Inn}$ and $\mathcal{H}^{f}(G)=h\Omega_{\ast}D^{f}:(\mathbb{R},\leq)\to \mathbf{Inn}$, which are corresponding to the persistent homology and the persistent harmonic space, respectively.
By Theorem \ref{theorem:persistencedecomposition}, we have the persistence Hodge decomposition for digraphs.
\begin{theorem}
For any real numbers $a\leq b$, there is a direct sum decomposition of inner product spaces
\begin{equation*}
  \Omega_{\ast}(\mathcal{D}^{f}(a))\cong \mathcal{H}^{a,b}(G)\oplus\im j_{a\to b}^{\ast}d\iota\iota^{\ast}d^{\ast}j_{a\to b}\oplus \im d^{\ast}d,
\end{equation*}
where $\mathcal{H}^{a,b}(G)\cong \ker \Delta^{a,b}$.
\end{theorem}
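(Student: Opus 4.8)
The plan is to recognize this statement as a direct specialization of Theorem \ref{theorem:persistencedecomposition} to the particular persistence differential graded inner product space arising from the weighted digraph. First I would invoke the construction carried out just above the statement: the functor $\Omega_{\ast}:\mathbf{Digraph}^{\hookrightarrow}\to \mathbf{DGI}$ composed with the persistence digraph $\mathcal{D}^{f}:(\mathbb{R},\leq)\to \mathbf{Digraph}^{\hookrightarrow}$ yields a persistence differential graded inner product space $\Omega_{\ast}\mathcal{D}^{f}:(\mathbb{R},\leq)\to \mathbf{DGI}$, with the connecting morphisms $j_{a\to b}:\Omega_{\ast}(\mathcal{D}^{f}(a))\to \Omega_{\ast}(\mathcal{D}^{f}(b))$ induced by the inclusions of digraphs. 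Since $\Omega_{\ast}(G_{1})\subseteq \Omega_{\ast}(G_{2})$ for an inclusion $G_{1}\hookrightarrow G_{2}$ and the inner products are inherited, each $j_{a\to b}$ is a genuine morphism in $\mathbf{DGI}$, so $\Omega_{\ast}\mathcal{D}^{f}$ satisfies the hypotheses of Theorem \ref{theorem:persistencedecomposition}.

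Having set $\mathcal{S}=\Omega_{\ast}\mathcal{D}^{f}$ and $f_{a\to b}=j_{a\to b}$, I would then simply apply Theorem \ref{theorem:persistencedecomposition} verbatim. That theorem provides, for every $a\leq b$, the direct sum decomposition of inner product spaces
\begin{equation*}
  \mathcal{S}_{a}=\ker \Delta^{a,b}_{\mathcal{S}}\oplus \im j_{a\to b}^{\ast}d\iota\iota^{\ast}d^{\ast}j_{a\to b}\oplus \im d^{\ast}d,
\end{equation*}
which is exactly the claimed decomposition of $\Omega_{\ast}(\mathcal{D}^{f}(a))$ once one identifies $\Delta^{a,b}_{\mathcal{S}}$ with the $(a,b)$-persistent Laplacian $\Delta^{a,b}$ of $f$ on $G$ defined above the statement. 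The only thing to verify here is that the abstract persistent Laplacian $\Delta^{a,b}_{\mathcal{S}}=j_{a\to b}^{\ast}d\iota\iota^{\ast}d^{\ast}j_{a\to b}+d^{\ast}d$ from Section \ref{section:Laplacians} agrees termwise with the digraph version, and that $\iota$ is the inclusion $\Theta_{j_{a\to b}}\hookrightarrow \Omega_{\ast}(\mathcal{D}^{f}(b))$; both hold by the definitions, so this is a matter of matching notation rather than a genuine computation.

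Finally, for the isomorphism $\mathcal{H}^{a,b}(G)\cong \ker \Delta^{a,b}$, I would cite Corollary \ref{corollary:isomorphism}, which gives $\ker \Delta^{a,b}_{\mathcal{S}}\cong \mathcal{H}^{a,b}(\mathcal{S})$ for any persistence differential graded inner product space; under the identification $\mathcal{H}^{a,b}(\mathcal{S})=hj_{a\to b}h\Omega_{\ast}(\mathcal{D}^{f}(a))=\mathcal{H}^{a,b}(G)$ this is precisely the asserted isomorphism. I do not anticipate a substantive obstacle in this argument: the entire content has already been proved at the abstract level of $\mathbf{DGI}$, and the work here is confined to confirming that the digraph constructions are instances of that abstract framework. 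The only point deserving a moment of care is checking that the $\partial$-invariant path complexes $\Omega_{\ast}$ assemble functorially under inclusions, but this is immediate from $\Omega_{p}(G)=\{u\in \mathcal{A}_{p}(G;\mathbb{R})\mid \partial u\in \mathcal{A}_{p-1}(G;\mathbb{R})\}$ being preserved by the subcomplex inclusions.
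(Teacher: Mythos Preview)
Your proposal is correct and matches the paper's own treatment: the paper states this theorem immediately after introducing $\Omega_{\ast}\mathcal{D}^{f}$ and the digraph versions of $\Delta^{a,b}$ and $\mathcal{H}^{a,b}(G)$, prefacing it only with ``By Theorem \ref{theorem:persistencedecomposition}, we have the persistence Hodge decomposition for digraphs,'' with no further proof. Your explicit unpacking of why $\mathcal{S}=\Omega_{\ast}\mathcal{D}^{f}$ satisfies the hypotheses and your appeal to Corollary \ref{corollary:isomorphism} for the final isomorphism are exactly the intended justifications.
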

By Theorem \ref{theorem:nature}, we have a natural isomorphism $\mathcal{H}^{a,b}(G)\to H^{a,b}(G)$ with respect to the map $a\to b$. It follows that
\begin{equation*}
  \dim \mathcal{H}_{p}^{a,b}(G)=\beta^{a,b}_{p}(G),\quad a,b\in \mathbb{R},
\end{equation*}
where $\beta^{a,b}_{p}=\dim H^{a,b}_{p}(G)$ is the $(a,b)$-persistent Betti number of $f$ on $G$. The \emph{persistence Laplacian tree of $f$ on $G$} is
\begin{equation*}
  \mathcal{L}^{f}=\mathcal{L}^{(G,f)}: (\mathbb{R},\leq)\to \mathbf{Lap},\quad \mathcal{L}^{f}(a)=(\Omega_{\ast}\mathcal{D}^{f}(a),L^{a}),
\end{equation*}
where $L^{a}=\{\Delta^{a,b}\}_{a\leq b}$ is a family of Laplacians.

\begin{example}\label{example:digraph-hodge}
Example \ref{example:digraph} continued. Let $f:E\to \mathbb{R}$ be a function defined by $f(e_{01})=1,f(e_{23})=2,f(e_{13})=3,f(e_{02})=4$. We have a functor $\mathcal{D}^{f}:(\mathbb{R},\leq )\to \mathbf{Digraph}^{\hookrightarrow}$.
\begin{figure}[htbp]
\centering
\subfigure[Digraph $\mathcal{D}^{f}(1)$]{
\begin{tikzpicture}[scale=1.1]
\draw[step=.4cm,gray,ultra thin, dashed] (-1.2,-0.5) grid (1.2,0.5);
\begin{scope}[thick, every node/.style={sloped,allow upside down}]
\draw (-1,0) --node {\midarrow} (0,0.4);
\draw[fill=black!100](-1,0) circle(1pt);
\draw[fill=black!100](1,0) circle(1pt);
\draw[fill=black!100](0,0.4) circle(1pt);
\draw[fill=black!100](0,-0.4) circle(1pt);
\node [font=\fontsize{8}{6}] (node001) at (-1.2,0){$0$};
\node [font=\fontsize{8}{6}] (node001) at (1.2,0){$3$};
\node [font=\fontsize{8}{6}] (node001) at (0,0.56){$1$};
\node [font=\fontsize{8}{6}] (node001) at (0,-0.56){$2$};
\end{scope}
\end{tikzpicture}}\quad
\subfigure[Digraph $\mathcal{D}^{f}(2)$]{
\begin{tikzpicture}[scale=1.1]
\draw[step=.4cm,gray,ultra thin, dashed] (-1.2,-0.5) grid (1.2,0.5);
\begin{scope}[thick, every node/.style={sloped,allow upside down}]
\draw (-1,0) --node {\midarrow} (0,0.4);
\draw (0,-0.4)-- node {\midarrow}(1,0);
\draw[fill=black!100](-1,0) circle(1pt);
\draw[fill=black!100](1,0) circle(1pt);
\draw[fill=black!100](0,0.4) circle(1pt);
\draw[fill=black!100](0,-0.4) circle(1pt);
\node [font=\fontsize{8}{6}] (node001) at (-1.2,0){$0$};
\node [font=\fontsize{8}{6}] (node001) at (1.2,0){$3$};
\node [font=\fontsize{8}{6}] (node001) at (0,0.56){$1$};
\node [font=\fontsize{8}{6}] (node001) at (0,-0.56){$2$};
\end{scope}
\end{tikzpicture}}\quad
\subfigure[Digraph $\mathcal{D}^{f}(3)$]{
\begin{tikzpicture}[scale=1.1]
\draw[step=.4cm,gray,ultra thin, dashed] (-1.2,-0.5) grid (1.2,0.5);
\begin{scope}[thick, every node/.style={sloped,allow upside down}]
\draw (-1,0) --node {\midarrow} (0,0.4)-- node {\midarrow}(1,0);
\draw (0,-0.4)-- node {\midarrow}(1,0);
\draw[fill=black!100](-1,0) circle(1pt);
\draw[fill=black!100](1,0) circle(1pt);
\draw[fill=black!100](0,0.4) circle(1pt);
\draw[fill=black!100](0,-0.4) circle(1pt);
\node [font=\fontsize{8}{6}] (node001) at (-1.2,0){$0$};
\node [font=\fontsize{8}{6}] (node001) at (1.2,0){$3$};
\node [font=\fontsize{8}{6}] (node001) at (0,0.56){$1$};
\node [font=\fontsize{8}{6}] (node001) at (0,-0.56){$2$};
\end{scope}
\end{tikzpicture}}
\quad
\subfigure[Digraph $\mathcal{D}^{f}(4)$]{
\begin{tikzpicture}[scale=1.1]
\draw[step=.4cm,gray,ultra thin, dashed] (-1.2,-0.5) grid (1.2,0.5);
\begin{scope}[thick, every node/.style={sloped,allow upside down}]
\draw (-1,0) --node {\midarrow} (0,0.4)-- node {\midarrow}(1,0);
\draw (-1,0) --node {\midarrow} (0,-0.4)-- node {\midarrow}(1,0);
\draw[fill=black!100](-1,0) circle(1pt);
\draw[fill=black!100](1,0) circle(1pt);
\draw[fill=black!100](0,0.4) circle(1pt);
\draw[fill=black!100](0,-0.4) circle(1pt);
\node [font=\fontsize{8}{6}] (node001) at (-1.2,0){$0$};
\node [font=\fontsize{8}{6}] (node001) at (1.2,0){$3$};
\node [font=\fontsize{8}{6}] (node001) at (0,0.56){$1$};
\node [font=\fontsize{8}{6}] (node001) at (0,-0.56){$2$};
\end{scope}
\end{tikzpicture}}
\caption{The persistence digraph in Example \ref{example:digraph-hodge}.}
\end{figure}
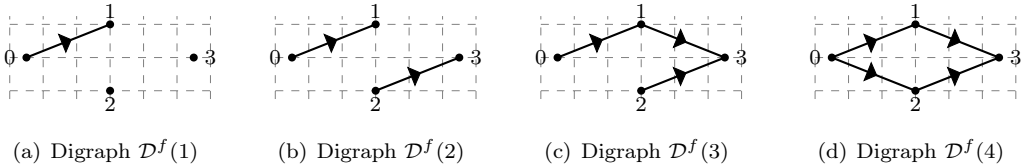
\noindent By a straightforward calculation, we have the Hodge decompositions of $\Omega_{\ast}(D^{f}(a))$ for $a=1,2,3$.
\begin{figure}[H]
\centering
\begin{tabular}{c|c|c|c}
  \hline
  $\Omega_{\ast}(\mathcal{D}^{f}(a))$ & $\mathcal{H}^{a}$ & $\im (d^{\ast})$ & $\im (d)$ \\
  \hline
  $a=1$ & $\mathrm{span}\{e_{0}+e_{1},e_{2},e_{3}\}$ & $\mathrm{span}\{e_{01}\}$ & $\mathrm{span}\{e_{1}-e_{0}\}$ \\
  $a=2$ & $\mathrm{span}\{e_{0}+e_{1},e_{2}+e_{3}\}$ & $\mathrm{span}\{e_{01},e_{23}\}$ & $\mathrm{span}\{e_{1}-e_{0},e_{3}-e_{2}\}$ \\
  $a=3$ & $\mathrm{span}\{e_{0}+e_{1}+e_{2}+e_{3}\}$ & $\mathrm{span}\{e_{01},e_{13},e_{23}\}$ & $\mathrm{span}\{e_{1}-e_{0},e_{3}-e_{1},e_{3}-e_{2}\}$ \\
  \hline
\end{tabular}
\end{figure}
\noindent Moreover, the maps of harmonic spaces are listed as follows.
\begin{equation*}
  \xymatrix@=0.3cm{
  \mathcal{H}^{1}&&\mathcal{H}^{2}&&\mathcal{H}^{3}&&\mathcal{H}^{4}\\
  e_{0}+e_{1}\ar@{->}[rr]^-{hj_{1\to 2}}&&e_{0}+e_{1}\ar@{->}[rr]^-{hj_{2\to 3}}&&\frac{1}{2}(e_{0}+e_{1}+e_{2}+e_{3})\ar@{->}[rr]^-{hj_{3\to 4}}&&\frac{1}{2}(e_{0}+e_{1}+e_{2}+e_{3})\\
  e_{2} \ar@{->}[rr]^-{hj_{1\to 2}}&&\frac{1}{2}(e_{2}+e_{3})\ar@{->}[rr]^-{hj_{2\to 3}}&&\frac{1}{4}(e_{0}+e_{1}+e_{2}+e_{3})\ar@{->}[rr]^-{hj_{3\to 4}}&&\frac{1}{4}(e_{0}+e_{1}+e_{2}+e_{3})\\
  e_{3} \ar@{->}[rr]^-{hj_{1\to 2}}&&\frac{1}{2}(e_{2}+e_{3})\ar@{->}[rr]^-{hj_{2\to 3}}&&\frac{1}{4}(e_{0}+e_{1}+e_{2}+e_{3})\ar@{->}[rr]^-{hj_{3\to 4}}&&\frac{1}{4}(e_{0}+e_{1}+e_{2}+e_{3})
  }
\end{equation*}
Here, $j_{a\to b}:\Omega_{\ast}(\mathcal{D}^{f}(a))\to \Omega_{\ast}(\mathcal{D}^{f}(b))$ is an inclusion of differential graded inner product spaces induced by the inclusion
$\mathcal{D}^{f}(a)\hookrightarrow \mathcal{D}^{f}(b)$ of digraphs. It can be obtained that
\begin{equation*}
  \mathcal{H}^{1}=\mathrm{span}\{e_{0}+e_{1},e_{2},e_{3}\},\mathcal{H}^{1,2}=\mathcal{H}^{2}=\mathrm{span}\{e_{0}+e_{1},e_{2}+e_{3}\},
\end{equation*}
\begin{equation*}
  \mathcal{H}^{1,3}=\mathcal{H}^{2,3}=\mathcal{H}^{3}=\mathcal{H}^{1,4}=\mathcal{H}^{2,4}=\mathcal{H}^{3,4}=\mathcal{H}^{4}=\mathrm{span}\{e_{0}+e_{1}+e_{2}+e_{3}\}.
\end{equation*}
The dimension of the $(a,b)$-persistent harmonic space is indeed equal to the $(a,b)$-persistent Betti number. The Laplacian trees at $a=1,2,3,4$ are listed as
\begin{equation*}
  \begin{split}
    (\Omega_{\ast}(\mathcal{D}^{f}(1)),L^{1}),& \quad L^{1}=\{\Delta^{1},\Delta^{1,2},\Delta^{1,3},\Delta^{1,4}\},\\
    (\Omega_{\ast}(\mathcal{D}^{f}(2)),L^{2}),& \quad L^{2}=\{\Delta^{2},\Delta^{2,3},\Delta^{2,4}\},\\
    (\Omega_{\ast}(\mathcal{D}^{f}(3)),L^{3}),& \quad L^{3}=\{\Delta^{3},\Delta^{3,4}\},\\
    (\Omega_{\ast}(\mathcal{D}^{f}(4)),L^{4}),& \quad L^{4}=\{\Delta^{4}\}.\\
  \end{split}
\end{equation*}
Note that $j_{3\to 4}$ is not split since $de_{02}\in j_{3\to 4}(\Omega_{\ast}(\mathcal{D}^{f}(3)))$. So we have $\Delta^{a,3}\neq\Delta^{a,4}$ for $a=1,2,3$. For example, by a step-by-step calculation, we obtain
\begin{small}
\begin{equation*}
  \Delta^{3}=\left(
                            \begin{array}{ccccccc}
                              1 & -1 & 0 & 0 & 0 & 0 & 0 \\
                              -1 & 2 & 0 & -1 & 0 & 0 & 0 \\
                              0 & 0 & 1 & -1 & 0 & 0 & 0 \\
                              0 & -1 & -1 & 2 & 0 & 0 & 0 \\
                              0 & 0 & 0 & 0 & 2 & -1 & 0 \\
                              0 & 0 & 0 & 0 & -1 & 2 & 1 \\
                              0 & 0 & 0 & 0 & 0 & 1 & 2 \\
                            \end{array}
                          \right), \Delta^{3,4}=\left(
                            \begin{array}{ccccccc}
                              2 & -1 & -1 & 0 & 0 & 0 & 0 \\
                              -1 & 2 & 0 & -1 & 0 & 0 & 0 \\
                              -1 & 0 & 2 & -1 & 0 & 0 & 0 \\
                              0 & -1 & -1 & 2 & 0 & 0 & 0 \\
                              0 & 0 & 0 & 0 & 2 & -1 & 0 \\
                              0 & 0 & 0 & 0 & -1 & 2 & 1 \\
                              0 & 0 & 0 & 0 & 0 & 1 & 2 \\
                            \end{array}
                          \right)
\end{equation*}
\end{small}
with respect to the basis $e_{0},e_{1},e_{2},e_{3},e_{01},e_{13},e_{23}$. This conforms to the assertion of Theorem \ref{theorem:equivalences}.
\end{example}

\subsection{The stability for the persistent Laplacian of digraphs}
Let $(G_{1},f)$ and $(G_{2},g)$ be weighted digraphs. We have two persistence digraphs
\begin{equation*}
  \mathcal{D}^{f}_{G_{1}},\mathcal{D}^{g}_{G_{2}}:(\mathbb{R},\leq)\to \mathbf{Digraph}^{\hookrightarrow},\quad a\mapsto \mathcal{D}^{f}(a)=(V,f^{-1}((\infty,a])).
\end{equation*}
defined by Equation (\ref{equation:persistence}). The interleaving distance of $(G_{1},f)$ and $(G_{2},g)$ is given by
\begin{equation*}
  d_{I}((G_{1},f),(G_{2},g))=d_{I}(\mathcal{D}^{f}_{G_{1}},\mathcal{D}^{g}_{G_{2}}).
\end{equation*}
Thus we have the algebraic stability theorem for digraphs.
\begin{theorem}\label{theorem:2digraph}
Let $(G_{1},f)$ and $(G_{2},g)$ be weighted digraphs. Then
\begin{equation*}
  d_{I}(\mathcal{H}^{f}(G_{1}),\mathcal{H}^{g}(G_{2}))=d_{I}(H^{f}(G_{1}),H^{g}(G_{2}))\leq d_{I}(\mathcal{L}^{(G_{1},f)},\mathcal{L}^{(G_{2},g)})\leq d_{I}((G_{1},f),(G_{2},g)).
\end{equation*}
\end{theorem}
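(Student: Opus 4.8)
The plan is to reduce every term in the displayed chain to an interleaving distance between the two persistence differential graded inner product spaces $\mathcal{S}=\Omega_{\ast}\mathcal{D}^{f}$ and $\mathcal{T}=\Omega_{\ast}\mathcal{D}^{g}$ attached to the two weighted digraphs, and then to quote the results already established for such objects. With this notation one reads off directly from the definitions of this section that $\mathcal{H}^{f}(G_{1})=h\mathcal{S}$, $H^{f}(G_{1})=H\mathcal{S}$, $\mathcal{L}^{(G_{1},f)}=\mathcal{L}_{\mathcal{S}}$, and likewise for $(G_{2},g)$ with $\mathcal{T}$; moreover $d_{I}((G_{1},f),(G_{2},g))=d_{I}(\mathcal{D}^{f}_{G_{1}},\mathcal{D}^{g}_{G_{2}})$ holds by definition. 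Thus the theorem becomes a chain of three assertions about the functors $h$, $H$, $\mathcal{L}$ and $\Omega_{\ast}$ evaluated on $\mathcal{S}$ and $\mathcal{T}$, and I would treat the three links in turn.

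For the leftmost equality I would invoke Theorem \ref{theorem:nature}, which supplies natural isomorphisms $h\mathcal{S}\Rightarrow H\mathcal{S}$ and $h\mathcal{T}\Rightarrow H\mathcal{T}$ of persistence objects. Since $d_{I}$ is an extended pseudometric on persistence objects (naturally isomorphic persistence objects are $0$-interleaved, hence at distance $0$, and the triangle inequality holds), it is invariant under such isomorphisms, whence $d_{I}(h\mathcal{S},h\mathcal{T})=d_{I}(H\mathcal{S},H\mathcal{T})$. This is precisely the identity recorded in the discussion following Corollary \ref{corollary:distance}.

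For the middle inequality I would first rewrite the right-hand side using Corollary \ref{corollary:distance}, giving $d_{I}(\mathcal{L}_{\mathcal{S}},\mathcal{L}_{\mathcal{T}})=d_{I}(\mathcal{S},\mathcal{T})$. The homology functor $H:\mathbf{DGI}\to\mathbf{Inn}$ induces by post-composition a functor $\mathbf{DGI}^{\mathbb{R}}\to\mathbf{Inn}^{\mathbb{R}}$, which by \cite[Proposition 3.6]{bubenik2014categorification} carries $\varepsilon$-interleaved objects to $\varepsilon$-interleaved objects and is therefore distance non-increasing. Hence $d_{I}(H\mathcal{S},H\mathcal{T})\le d_{I}(\mathcal{S},\mathcal{T})=d_{I}(\mathcal{L}_{\mathcal{S}},\mathcal{L}_{\mathcal{T}})$, which is the desired bound.

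For the rightmost inequality I would again apply Corollary \ref{corollary:distance} to write $d_{I}(\mathcal{L}_{\mathcal{S}},\mathcal{L}_{\mathcal{T}})=d_{I}(\mathcal{S},\mathcal{T})=d_{I}(\Omega_{\ast}\mathcal{D}^{f},\Omega_{\ast}\mathcal{D}^{g})$, and then use the same functoriality principle for $\Omega_{\ast}:\mathbf{Digraph}^{\hookrightarrow}\to\mathbf{DGI}$: its induced functor on persistence objects is distance non-increasing by \cite[Proposition 3.6]{bubenik2014categorification}, so $d_{I}(\Omega_{\ast}\mathcal{D}^{f},\Omega_{\ast}\mathcal{D}^{g})\le d_{I}(\mathcal{D}^{f}_{G_{1}},\mathcal{D}^{g}_{G_{2}})=d_{I}((G_{1},f),(G_{2},g))$. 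Concatenating the three links yields the stated chain. The theorem is essentially a formal consequence of the earlier results, so I do not expect any single hard computation; the only points demanding care are checking that $H$ and $\Omega_{\ast}$ are genuine functors whose post-composition preserves $\varepsilon$-interleavings (so that \cite[Proposition 3.6]{bubenik2014categorification} applies), and confirming that the natural isomorphism of Theorem \ref{theorem:nature} forces an equality, rather than merely an inequality, of the two distances in the first link.
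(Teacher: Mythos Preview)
Your proposal is correct and follows essentially the same route as the paper's proof: the equality comes from Theorem \ref{theorem:nature}, the middle inequality from \cite[Proposition 3.6]{bubenik2014categorification} combined with Corollary \ref{corollary:distance} (the paper cites Theorem \ref{theorem:main2} instead, which amounts to the same thing), and the last inequality from the functoriality of $\Omega_{\ast}$ together with the same two ingredients. Your write-up is in fact slightly more explicit than the paper's about why each step is valid.
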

\begin{proof}
By Theorem \ref{theorem:nature}, the first equation is established. From \cite[Proposition 3.6]{bubenik2014categorification}, one has $d_{I}(H^{f}(G),H^{g}(G))\leq d_{I}(\mathcal{L}^{f},\mathcal{L}^{g})$. By Theorem \ref{theorem:main2}, we obtain
\begin{equation*}
  d_{I}(\mathcal{L}^{(G_{1},f)},\mathcal{L}^{(G_{2},g)})=d_{I}(\Omega_{\ast}\mathcal{D}^{f}_{G_{1}},\Omega_{\ast}\mathcal{D}^{g}_{G_{2}})\leq d_{I}(\mathcal{D}^{f}_{G_{1}},\mathcal{D}^{g}_{G_{2}})=d_{I}((G_{1},f),(G_{2},g)).
\end{equation*}
The last inequality follows.
\end{proof}
For two real-valued functions $f,g$ on a digraph $G=(V,E)$, define $\|f-g\|_{\infty}=\sup\limits_{e\in E}|f(e)-g(e)|$. A corollary of Theorem \ref{theorem:2digraph} is as follows.
\begin{corollary}\label{corollary:digraph}
Let $(G,f)$ and $(G,g)$ be weighted digraphs. Then
\begin{equation*}
  d_{I}(\mathcal{H}^{f}(G),\mathcal{H}^{g}(G))=d_{I}(H^{f}(G),H^{g}(G))\leq d_{I}(\mathcal{L}^{f},\mathcal{L}^{g})\leq \|f-g\|_{\infty}.
\end{equation*}
\end{corollary}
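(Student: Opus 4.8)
The plan is to derive the corollary almost entirely from Theorem~\ref{theorem:2digraph} by specializing to the case $G_{1}=G_{2}=G$, and then to supply the single additional estimate $d_{I}((G,f),(G,g))\leq \|f-g\|_{\infty}$ that sharpens the last inequality of that theorem. Setting $G_{1}=G_{2}=G$ in Theorem~\ref{theorem:2digraph} immediately yields
\begin{equation*}
  d_{I}(\mathcal{H}^{f}(G),\mathcal{H}^{g}(G))=d_{I}(H^{f}(G),H^{g}(G))\leq d_{I}(\mathcal{L}^{f},\mathcal{L}^{g})\leq d_{I}((G,f),(G,g)),
\end{equation*}
so the whole statement reduces to bounding the rightmost term by $\|f-g\|_{\infty}$.

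For this last estimate I would mimic the proof of Theorem~\ref{theorem:complex}. Put $\varepsilon=\|f-g\|_{\infty}=\sup_{e\in E}|f(e)-g(e)|$. For any $a\in\mathbb{R}$ and any edge $e$ with $f(e)\leq a$ one has $g(e)\leq f(e)+\varepsilon\leq a+\varepsilon$, so $f^{-1}((-\infty,a])\subseteq g^{-1}((-\infty,a+\varepsilon])$ as edge sets. Hence there is an inclusion of digraphs $\mathcal{D}^{f}(a)\hookrightarrow \mathcal{D}^{g}(a+\varepsilon)=(\Sigma^{\varepsilon}\mathcal{D}^{g}_{G})(a)$, and symmetrically $\mathcal{D}^{g}(a)\hookrightarrow (\Sigma^{\varepsilon}\mathcal{D}^{f}_{G})(a)$. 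These inclusions are natural in $a$, giving morphisms $\phi:\mathcal{D}^{f}_{G}\Rightarrow\Sigma^{\varepsilon}\mathcal{D}^{g}_{G}$ and $\psi:\mathcal{D}^{g}_{G}\Rightarrow\Sigma^{\varepsilon}\mathcal{D}^{f}_{G}$ in the category $(\mathbf{Digraph}^{\hookrightarrow})^{\mathbb{R}}$.

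I would then verify that the composites are the canonical shifts: at each level $(\Sigma^{\varepsilon}\psi)\phi$ is the inclusion $\mathcal{D}^{f}(a)\hookrightarrow\mathcal{D}^{f}(a+2\varepsilon)$, which is exactly $\Sigma^{2\varepsilon}|_{\mathcal{D}^{f}_{G}}$, and likewise $(\Sigma^{\varepsilon}\phi)\psi=\Sigma^{2\varepsilon}|_{\mathcal{D}^{g}_{G}}$. This exhibits an $\varepsilon$-interleaving, so by Definition~\ref{definition:distance} we obtain $d_{I}((G,f),(G,g))=d_{I}(\mathcal{D}^{f}_{G},\mathcal{D}^{g}_{G})\leq\varepsilon=\|f-g\|_{\infty}$. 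Chaining this with the displayed inequalities above completes the argument. No genuine obstacle arises here; the only point requiring care is confirming that all the triangles of inclusions commute strictly, which holds because every map in sight is literally a set-theoretic inclusion of edge sets, so that the interleaving identities hold on the nose rather than merely up to isomorphism.
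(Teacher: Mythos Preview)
Your proposal is correct and matches the paper's approach exactly: the corollary is a specialization of Theorem~\ref{theorem:2digraph} to $G_{1}=G_{2}=G$, and the remaining bound $d_{I}((G,f),(G,g))\leq \|f-g\|_{\infty}$ is obtained by the same interleaving-via-inclusions argument as in the proof of Theorem~\ref{theorem:complex}. The paper's proof literally just says ``the last inequality is obtained through a similar argument as in the proof of Theorem~\ref{theorem:complex},'' which is precisely what you have spelled out.
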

\begin{proof}
The last inequality is obtained through a similar argument as in the proof of Theorem \ref{theorem:complex}.
\end{proof}

\medskip
\noindent {\bf Acknowledgement}.
This work is supported in part by the Natural Science Foundation of China (NSFC grant no. 11971144), High-level Scientific Research Foundation of Hebei Province and the start-up research fund from BIMSA.
\bibliographystyle{plain}
\bibliography{stability_ref}

\bigskip

\medskip

Jian Liu

Affiliation: $^1$Mathematical Science Research Center, Chongqing University of Technology, 400054, China.

$^2$Yanqi Lake Beijing Institute of Mathematical Sciences and Applications, 101408, China.

e-mail: liujian@bimsa.cn

\medskip

Jingyan Li

Affiliation: Yanqi Lake Beijing Institute of Mathematical Sciences and Applications, 101408, China.

e-mail: jingyanli@bimsa.cn

\medskip

Jie Wu

Affiliation: Yanqi Lake Beijing Institute of Mathematical Sciences and Applications, 101408, China.

e-mail: wujie@bimsa.cn

\medskip

\end{CJK*}
 \end{document}